\declaretheorem[name=Theorem, numberwithin=section]{theorem}
\newtheorem{corollary}[theorem]{Corollary}
\newtheorem{lemma}[theorem]{Lemma}
\newtheorem{proposition}[theorem]{Proposition}
\newtheorem{definition}[theorem]{Definition}
\newtheorem{assumptions}{Assumption}
\newtheorem*{example*}{Example}
\newtheorem*{robremark*}{Robustness remark}
\declaretheoremstyle[bodyfont=\normalfont]{remark-style}
\numberwithin{equation}{section}
\theoremstyle{plain}
\newcommand{\N}{\mathds{N}}
\newcommand{\R}{\mathds{R}}
\newcommand{\C}{\mathds{C}}
\newcommand{\diag}{\mathrm{diag}}
\newcommand{\BIGOP}[1]
{
\mathop{\mathchoice%
{\raise-0.22em\hbox{\huge $#1$}}%
{\raise-0.05em\hbox{\Large $#1$}}{\hbox{\large $#1$}}{#1}}}
\def\XXint#1#2#3{{\setbox0=\hbox{$#1{#2#3}{\int}$}
     \vcenter{\hbox{$#2#3$}}\kern-.5\wd0}}
\newcommand{\BIGboxplus}{\mathop{\mathchoice%
{\raise-0.35em\hbox{\huge $\boxplus$}}%
{\raise-0.15em\hbox{\Large $\boxplus$}}{\hbox{\large $\boxplus$}}{\boxplus}}}
\DeclareMathOperator{\supp}{supp}
\renewcommand{\d}{\textnormal{d}}
\begin{document}
\allowdisplaybreaks
\title{The martingale problem for a class of nonlocal operators of diagonal type}

\author{Jamil Chaker}

\address{Fakult\"{a}t f\"{u}r Mathematik\\Universit\"{a}t Bielefeld\\Postfach 
100131\\D-33501 Bielefeld}

\keywords{L\'evy processes, Systems of stochastic differential equations, Martingale problem, perturbation, $L^p$-Multiplier}


\email{jchaker@math.uni-bielefeld.de}

\begin{abstract}
We consider systems of stochastic differential equations of the form
\[ \d X_t^i = \sum_{j=1}^d A_{ij}(X_{t-}) \d Z_t^j\]
for $i=1,\dots,d$ with continuous, bounded and non-degenerate coefficients. Here $Z_t^1,\dots,Z_t^d$ are independent one-dimensional stable processes 
with $\alpha_1,\dots,\alpha_d\in(0,2)$. 
In this article we research on uniqueness of weak solutions to such systems by studying the corresponding martingale problem.  
We prove the uniqueness of weak solutions in the case of diagonal coefficient matrices.
\end{abstract}
\maketitle                   
\section{Introduction}
Anisotropies and discontinuities are phenomena of great interest that arise in several natural and financial models. 
In this paper we bring together these two subjects and investigate in anisotropic nonlocal operators.
We research on diagonal systems of stochastic differential equations driven by pure jump L\'evy processes with anisotropic L\'evy measures. 

Let $d\in\N$, $d\geq 3$. For $j\in\{1,\dots,d\}$, let $(Z_t^j)_{t\geq 0}$ be an one-dimensional symmetric stable L\'evy process of order $\alpha_j \in (0,2)$, i.e.
$Z_t^j$ is a pure jump L\'evy process with L\'evy-Khintchine triplet $(0,0,c_{\alpha_i}|h|^{-1-\alpha_j} \d h)$, where the constant $c_{\alpha_j}$ is chosen such that
\[ \mathds{E}[e^{i\xi Z_t^j}]=e^{-t|\xi|^{\alpha_j}} \quad \text{for } t>0 \text{ and } \xi\in\R.  \]
We assume $Z_t^j$'s to be independent and define the $d$-dimensional process $Z_t=(Z_t^1,\dots,Z_t^d)$.
Consider the following system
 \begin{equation}\label{SDEM}
 \begin{aligned}
      &\displaystyle dX^i_{t} = \sum_{j=1}^d A_{ij}(X_{t-})dZ_{t}^j, \quad \text{ for } i\in\{1,\dots,d\}, \\
   &X_0 = x_0,   
   \end{aligned} 
   \end{equation}
where $A:\R^d\to\R^{d\times d}$ is a matrix-valued function, which is pointwise non-degenerate and has bounded continuous entries. 
We mainly study this system in the case of diagonal matrices, i.e. $A_{i,j}\equiv 0$ whenever $i\neq j$.
The aim of this paper to prove uniqueness of solutions to the martingale problem for diagonal systems.
If the matrix-valued function $A$ is global Lipschitz continuous, it is easy to prove that there is a unique strong solution by Picard's iteration. However, when
$A$ is only continuous, the uniqueness of weak solutions becomes a challenging problem. 
Since L\'evy processes are appropriate processes for modeling price processes for instance, such stochastic differential equations are interesting objects in financial mathematics.

The case where $\alpha_1=\cdots=\alpha_d=\alpha\in(0,2)$ has been studied systematically in the non-diagonal case in \cite{BCS} for non-diagonal coefficient matrices. Furthermore, if $Z_t$ consists of independent $2$-stable L\'evy processes, the process $Z_t$ is a $d$-dimensional Brownian motion running twice the speed, which is well studied. 

We study the diagonal system with the method of Stroock and Varadhan from 1969, see \cite{STROOCKVARA}.
The celebrated martingale problem provides an equivalent concept of 
existence and uniqueness in law for weak solutions to stochastic differential equations.
An overview of the martingale problem for elliptic operators in non-divergence form can be found in \cite[Chapter 6]{STROOCK} or \cite[Chapter VI]{BAD}.

Up to the present day, the martingale problem is still an intensely studied topic. 
For instance, in \cite{ABELSKAS} unique solvability of the Cauchy problem for a class 
of integro-differential operators is shown to imply the well-posedness
of the martingale problem for the corresponding operator. In \cite{CHENZHANG} the authors study well-posedness of the martingale problem for a class of stable-like operators and in
\cite{PRIOLA} the author considers degenerate stochastic differential equations and proves weak uniqueness of solutions using the martingale problem.
In \cite{KUEHN}, existence and uniqueness for stochastic differential equations driven by L\'evy processes and stable-like processes with unbounded coefficients are studied.
For an overview of the martingale problem for jump processes, see \cite[Chapter 4]{Jacob} and the references therein.

Let us give a short survey to known results related to our studies.\\
In \cite{BASSUNIQ}, Bass considers nonlocal operators of the form
\[ Lf(x)=\int_{\R\setminus\{0\}} (f(x+h)-f(x)-f'(x)h\mathds{1}_{[-1,1]}(h)) \, \nu(x,\d h) \]
for $f\in C_b^2(\R)$ and gives sufficient conditions on $\nu$ for the existence and uniqueness of a solution to the martingale problem for $L$. 
Further, the author proves that the associated stochastic process $X_t$ is a Feller process with respect to the unique solution $\mathds{P}^x$ to the martingale problem for $L$ started at $x\in\R$.
One important example in the paper is $\nu(x,\d h)\asymp |h|^{-1-\alpha(x)}\d h$ with $0<\inf\{\alpha(x)\colon x\in\R\} \leq \sup\{\alpha(x)\colon x\in\R\}<2$. In this case the Dini 
continuity of $\alpha:\R\to(0,2)$ is a sufficient condition for well-posedness of the martingale problem.
Although the results in the paper were proven in one spatial dimension they can be extended to higher dimensions. 
In \cite{Schilling}, the authors present sufficient conditions for the transience and the existence of local times for Feller processes. The studies contain the class of stable-like processes of
\cite{BASSUNIQ}. With a different method the authors prove a transience criterion and the existence of local times for these kind of processes in $d$ dimensions for $d\in\N$.\\
In \cite{HOH1}, Hoh considers operators of a similar form, but the starting point is a different representation of the operator.
The author studies the operator as a pseudo-differential operator of the form
\[ Lf(x)=-p(x,D)f(x)=-(2\pi)^{-d/2}\int_{\R^d}e^{ix\cdot \xi}p(x,\xi)\cdot \widehat{f}(\xi)\, \d\xi \]
for $f\in C_0^{\infty}(\R^d)$, where for any fixed $x\in\R^d$, $p(x,\cdot)$ is negative definite. 
In the paper, uniqueness of the martingale problem for pseudo-differential operators with the symbol $p(x,\xi)$ of the form
\[ p(x,\xi)=-\sum_{i=1}^d b_i(x)a_i(\xi) \]
is studied, where $b_i$, $i\in\{1,\dots,d\}$ are non-negative, bounded and $d+m$ times continuously differentiable for some $m\in\N$ and $a_i$, $i\in\{1,\dots,d\}$ are
continuous non-negative definite with $a_i(0)=0$. This covers for example symbols of the type
\[ -\sum_{i=1}^d b_i(x)|\xi_j|^{\alpha_j} \]
for $\alpha_j\in(0,2]$ with the already mentioned conditions on $b_i$. \\
In \cite{BCS} the authors study systems of stochastic differential equations of the form \eqref{SDEM},
where the driving process $Z_t=(Z_t^1,\dots,Z_t^d)$ consists of $d$ independent copies 
of a one-dimensional symmetric stable process of index $\alpha\in(0,2)$. Hence the indices of stability are the same in every direction, which is the main difference to our model.
In the article the authors prove existence and well-posedness of the martingale problem for
\[ \mathcal{L}f(x)=\sum_{j=1}^d \int_{\R\setminus\{0\}} \left(f(x+a_j(x)w)-f(x)-w\mathds{1}_{|w|\leq 1}\nabla f(x)\cdot a_j(x)\right)\frac{c_{\alpha}}{|w|^{1+\alpha}} \, \d h, \]
where $f\in C_b^2(\R^d)$ and $a_j(x)$ denotes the j$^{\text{th}}$ column of $A(x)$ and show that this is equivalent to existence and uniqueness of solutions to \eqref{SDEM}.
This operator is a pseudo-differential operator of the form
\[ \mathcal{L}f(x)=-\int_{\R^d}p(x,\xi)e^{-ix\cdot \xi} \widehat{f}(\xi)\, \d\xi, \]
where
\begin{equation}\label{symbolp}
p(x,\xi)=-\sum_{i=1}^d |\xi\cdot a_j(x)|^{\alpha}.
\end{equation} 
In \cite{SchilSchn}, Schilling and Schnurr study stochastic differential equations of the form
$dX_t=\Phi(X_{t-})dL_t, X_0=x_0\in\R^d$, where $\Phi:\R^d\to\R^{d\times n}$ is Lipschitz continuous with linear growth and $L_t$ is a $\R^n$-valued L\'evy process. They prove that the unique strong solution of this equation has the symbol given by 
$p(x,\xi)=\Psi(\Phi(x)^t\xi)$, where $\Psi$ is the symbol of $L_t$ (see \cite[Theorem 3.1]{SchilSchn}).
Note that the symbol \eqref{symbolp} is exactly of the form $p(x,\xi)=\Psi(A(x)^t\xi)$ as in \cite{SchilSchn}, where $A(x)=(a_{ij}(x))_{i,j=1,\dots,d}$ and $\Psi(\xi)=-\sum_{i=1}^d |\xi_i|^{\alpha}$.

Because of the lack of differentiability, the symbol \eqref{symbolp} does not fit into the set-up of \cite{HOH1}. The authors' central idea is the usage of a perturbation argument as in \cite{STROOCK}. 
The main part of the paper is the proof of an $L^p$-boundedness result of pseudo-differential operators whose symbols have the form
\[-\frac{|a(x)\cdot \xi|^{\alpha}}{\sum_{i=1}^d |\xi_i|^{\alpha}}, \]
where $a:\R^d\to\R^d$ is continuous and bounded with respect to the Euclidean norm from above and below by positive constants. The proof of this $L^p$-boundedness follows from 
a result by Calder\'on and Zygmund, see \cite{CALDERON}. 
Therefore, the main difficulty is to show that the operator fits into the set-up of \cite{CALDERON}, which is done with the method of rotations.\\
In \cite{BCR}, Bass and Chen study the same system of stochastic differential equations and prove
H\"older regularity of harmonic functions with respect to $\mathcal{L}$. 
Furthermore, they give a counter example and thus show that the Harnack inequality for harmonic functions is not fulfilled.
In \cite{Kulc} the authors study \eqref{SDEM} in the case of diagonal matrices $A$ and $\alpha_1=\alpha_2=\dots=\alpha_d=\alpha\in(0,2)$. They prove 
sharp two-sided estimates of the corresponding transition density $p^{A}(t,x,y)$ and prove H\"older and gradient estimates for the function $x\mapsto p^{A}(t,x,y)$.

Our consideration of the system \eqref{SDEM} driven by the anisotropic L\'evy process consisting of independent one-dimensional stable L\'evy processes of orders that may differ leads to the operator 
\begin{equation}\label{introoper}
\mathcal{L}f(x)=\sum_{j=1}^d \int_{\R\setminus\{0\}} \left(f(x+a_j(x)w)-f(x)-w\mathds{1}_{\{|w|\leq 1\}}\nabla f(x)\cdot a_j(x)\right)\frac{c_{\alpha_j}}{|w|^{1+\alpha_j}} \, \d h. 
\end{equation}
Since the existence of weak solutions is well-studied, we shortly comment on known results from which one can derive the existence of weak solutions to \eqref{SDEM}.
If $A_{ij}$ is bounded and continuous for every $i,j\in\{1,\dots,d\}$, the existence of weak solutions to \eqref{SDEM} can be deduced for instance from \cite[Theorem 3.15]{HOHHAB} or \cite[Theorem 4.1]{KUEHN}. 
Furthermore, a direct proof of the existence of weak solutions to \eqref{SDEM} for the case $\alpha_1=\alpha_2=\dots=\alpha_d\in(0,2)$ can be found in \cite[Section 4]{BCS}. This proof can be adjusted to the case of different indices of stability.\\
In order to prove uniqueness of solutions to the martingale problem, we have to restrict ourselves to matrices whose entries are zero outside the diagonal.  
The reason we have to restrict ourselves to this restrictive case, is that the approach of Calder\'on and Zygmund seems to be not applicable. We have not been able to prove a general non-diagonal version of the required $L^p$-boundedness result (c.f.  Proposition \ref{BR0}) and leave it as a conjecture. Once this result is proven the techniques we use would lead to uniqueness of solutions to the martingale problem.\\
In the case of diagonal matrices, we have to prove an $L^p$-boundedness result for 
pseudo-differential operators $\mathcal{B}$ of the form
\[ \mathcal{B}f(x) = \int_{\R^d}\left(\sum_{k=1}^d\frac{c_{\alpha_k}|A_{kk}(x)\xi_k|^{\alpha_k}}{\sum_{i=1}^d |C_i\xi_i|^{\alpha_i}}\right)e^{-ix\cdot \xi} \widehat{f}(\xi)\, \d\xi \]
for some constants $C_i\neq 0$ in order to apply the perturbation argument as in \cite{STROOCK}.

The main ingredient in the proof of the $L^p$-bound for the perturbation operator $\mathcal{B}$ is a Fourier multiplier theorem 
which goes back to Ba\~nuelos and Bogdan, see \cite[Theorem 1]{BANFOUR}. 
In order to apply this result we have to show that the perturbation operator $\mathcal{B}$ is an operator on $L^2(\R^d)$ with the representation
\[ \widehat{\mathcal{B}f}(\xi) =  \frac{\int_{\R^d} (\cos(\xi\cdot z)-1) \phi(z) \, V(\d z)}{\int_{\R^d} (\cos(\xi\cdot z)-1) \, V(\d z)} \widehat{f}(\xi) \]
for a measurable and bounded function $\phi:\R^d\to\C$ and a positive L\'evy measure $V$. This allows us to prove the well-posedness of the martingale problem for $\mathcal{L}$.
\begin{theorem}\label{thm:uniqueness}
 Suppose $A$ satisfies $A_{ij}\equiv0$ for $i\neq j$, $x\mapsto A_{jj}(x)$ is bounded continuous for all $j\in\{1,\dots,d\}$ and $A(x)$ is non-degenerate for any $x\in\R^d$.
 For every $x_0\in\R^d$, there is a unique solution to the martingale problem for $\mathcal{L}$ started at $x_0\in\R^d$.
\end{theorem}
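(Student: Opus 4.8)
\emph{Plan of proof.} The plan is to run the localization and perturbation scheme of Stroock and Varadhan \cite{STROOCKVARA} (see also \cite[Chapter~6]{STROOCK} and its adaptation to stable-driven systems in \cite{BCS}). Existence of a solution to the martingale problem for $\mathcal{L}$ is already available from \cite[Theorem~3.15]{HOHHAB}, \cite[Theorem~4.1]{KUEHN} or \cite[Section~4]{BCS}, so only uniqueness has to be proved. By the standard reduction it suffices to show that any two solutions $\PP_1,\PP_2$ started at $x_0$ have the same one-dimensional marginals, and for this --- conditioning on the past via regular conditional probabilities, cf.\ \cite[Chapter~6]{STROOCK} --- it is enough to show that for every $f\in C_c^\infty(\R^d)$ and every suitable $\lambda>0$ the quantities $S_i^\lambda f:=\mathds{E}^{\PP_i}\!\big[\int_0^\infty e^{-\lambda t}f(X_t)\,\d t\big]$, $i=1,2$, coincide. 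Using that $u(X_t)-u(X_0)-\int_0^t\mathcal{L}u(X_s)\,\d s$ is a $\PP_i$-martingale one gets $S_i^\lambda\big((\lambda-\mathcal{L})u\big)=u(x_0)$ for every $u$ in a suitable class of admissible test functions; hence it suffices to solve $(\lambda-\mathcal{L})u=f$ within such a class, after which $S_1^\lambda f=u(x_0)=S_2^\lambda f$.

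To solve $(\lambda-\mathcal{L})u=f$ I would freeze the coefficients at $x_0$: let $\mathcal{L}^{x_0}$ be the Fourier multiplier with symbol $-\sum_{j=1}^d c_{\alpha_j}|A_{jj}(x_0)|^{\alpha_j}|\xi_j|^{\alpha_j}$ --- the generator of a L\'evy process made of independent rescaled one-dimensional stable processes --- and $R_{x_0}^\lambda:=(\lambda-\mathcal{L}^{x_0})^{-1}$ its resolvent. The substitution $w\mapsto A_{jj}(x)^{-1}w$ in each summand of \eqref{introoper}, together with the symmetry of the stable L\'evy measures (which annihilates the truncation correction), shows that in the diagonal case $\mathcal{L}g(x)=-\sum_{j=1}^d|A_{jj}(x)|^{\alpha_j}(-\partial_{x_j}^2)^{\alpha_j/2}g(x)$, with $(-\partial_{x_j}^2)^{\alpha_j/2}$ the one-dimensional fractional Laplacian in the variable $x_j$. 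Setting $\mathcal{B}_\lambda:=(\mathcal{L}-\mathcal{L}^{x_0})R_{x_0}^\lambda$ gives the factorization $\lambda-\mathcal{L}=(I-\mathcal{B}_\lambda)(\lambda-\mathcal{L}^{x_0})$, so a solution is $u:=R_{x_0}^\lambda(I-\mathcal{B}_\lambda)^{-1}f$ once $I-\mathcal{B}_\lambda$ is boundedly invertible on $L^p(\R^d)$. Since $\mathcal{B}_\lambda f=-\sum_j\big(|A_{jj}(\cdot)|^{\alpha_j}-|A_{jj}(x_0)|^{\alpha_j}\big)(-\partial_{x_j}^2)^{\alpha_j/2}R_{x_0}^\lambda f$, and each $(-\partial_{x_j}^2)^{\alpha_j/2}R_{x_0}^\lambda$ is, up to the $L^p$-contraction $\lambda R_{x_0}^\lambda$, an operator of the type $\mathcal{B}$ displayed before the theorem (with $C_i$ chosen so that $|C_i\xi_i|^{\alpha_i}=c_{\alpha_i}|A_{ii}(x_0)|^{\alpha_i}|\xi_i|^{\alpha_i}$, which is legitimate because non-degeneracy of $A(x_0)$ forces $A_{jj}(x_0)\neq0$), the invertibility of $I-\mathcal{B}_\lambda$ reduces to the $L^p$-boundedness of the operators $\mathcal{B}$ together with a smallness argument.

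The decisive step is therefore the $L^p$-boundedness of $\mathcal{B}$, i.e.\ Proposition~\ref{BR0}. Following the roadmap sketched before the theorem, one freezes the spatial variable, verifies that for each frozen point the symbol $\xi\mapsto\sum_k c_{\alpha_k}|A_{kk}(x)\xi_k|^{\alpha_k}/\sum_i|C_i\xi_i|^{\alpha_i}$ can be written in the form $\big(\int_{\R^d}(\cos(\xi\cdot z)-1)\,\phi(z)\,V(\d z)\big)\big/\big(\int_{\R^d}(\cos(\xi\cdot z)-1)\,V(\d z)\big)$ with $V$ the anisotropic (product-type) L\'evy measure associated with $\sum_i|C_i\xi_i|^{\alpha_i}$ and $\phi$ measurable and bounded with $\|\phi\|_\infty$ controlled uniformly in $x$, and then invokes the Fourier multiplier theorem of Ba\~nuelos and Bogdan \cite[Theorem~1]{BANFOUR} to obtain an $L^p(\R^d)$-bound for every $p\in(1,\infty)$, uniformly in the frozen point; a freezing/approximation argument upgrades this to the genuine operator $\mathcal{B}$. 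Granting this, $\|\mathcal{B}_\lambda\|_{L^p\to L^p}\le c\sum_j\sup_{x\in\R^d}\big||A_{jj}(x)|^{\alpha_j}-|A_{jj}(x_0)|^{\alpha_j}\big|$, which can be made $<1$ by localizing --- replacing $A$ outside a small ball $B_r(x_0)$ by the constant matrix $A(x_0)$ (leaving it bounded, continuous and non-degenerate, and leaving the law of any solution unchanged up to the first exit from $B_r(x_0)$), so that, by continuity of the $A_{jj}$, the above supremum becomes as small as desired --- and, if needed, by taking $\lambda$ large. A Neumann series then inverts $I-\mathcal{B}_\lambda$ on $L^p$ and produces $u=R_{x_0}^\lambda(I-\mathcal{B}_\lambda)^{-1}f$. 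Choosing $p$ large enough (and, if necessary, iterating the construction once to trade $L^p$-regularity for Hölder--Zygmund regularity in the spirit of \cite{BCS}), the anisotropic Bessel-potential/Hölder estimates for $R_{x_0}^\lambda$ guarantee that $u$ is bounded and lies in the admissible test-function class, so that $S_i^\lambda f=u(x_0)$ for $i=1,2$.

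This yields uniqueness for the localized operator, and a standard patching argument (cf.\ \cite[Chapter~6]{STROOCK} and \cite{BCS}), passing from agreement of solutions up to exit times of small balls to agreement globally, recovers uniqueness for $\mathcal{L}$ itself; combined with the already available existence this gives well-posedness for every $x_0\in\R^d$. I expect the genuine obstacle to be Proposition~\ref{BR0}: producing the bounded function $\phi$ and the L\'evy measure $V$ that realize the symbol of $\mathcal{B}$ in the Ba\~nuelos--Bogdan form is precisely the point at which the anisotropy (the $\alpha_j$ being possibly distinct) obstructs the Calder\'on--Zygmund/method-of-rotations route available when $\alpha_1=\dots=\alpha_d$, which is why the argument is restricted to diagonal $A$. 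A secondary, more technical difficulty is to calibrate the function space --- large $p$, Hölder--Zygmund scales, possibly one bootstrap --- so that $\mathcal{B}_\lambda$ has norm below $1$ while the resolved function $u$ is simultaneously regular enough to be a legitimate test function for the martingale problem.
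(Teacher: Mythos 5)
Your overall strategy (freeze the coefficients at $x_0$, control the perturbation $\mathcal{B}=\mathcal{L}-\mathcal{L}_0$ in $L^p$ via the Ba\~nuelos--Bogdan multiplier theorem with the product-type anisotropic L\'evy measure, localize so that the coefficient oscillation is small, then patch with regular conditional probabilities along iterated exit times) is the same as in the paper, and your sketch of Proposition \ref{BR0} and of the localization/patching step is accurate. The gap is in the way you execute the perturbation step. You propose to actually solve $(\lambda-\mathcal{L})u=f$ by the Neumann series $u=R_{x_0}^{\lambda}(I-\mathcal{B}_\lambda)^{-1}f$ and to conclude $S_i^\lambda f=u(x_0)$, which requires $u$ to be an admissible test function for the martingale problem, i.e.\ $u\in C_b^2(\R^d)$. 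This cannot work as stated: $(I-\mathcal{B}_\lambda)^{-1}f$ is only an $L^p$-function (the inverse exists only as the bounded $L^p$-extension), and the frozen resolvent $R_{x_0}^{\lambda}$ applied to an $L^p$-function gains only $\alpha_j<2$ ``derivatives'' in the $j$-th direction, so $u$ lands in an anisotropic Bessel-potential space that never embeds into $C_b^2$, no matter how large $p$ is; with merely continuous $A_{jj}$ there is no Schauder-type bootstrap available either (and the reference \cite{BCS} you invoke for this does not perform such a bootstrap). So the sentence ``the resolved function $u$ is simultaneously regular enough to be a legitimate test function'' is precisely the step that fails, and it is not a secondary technicality but the reason the argument has to be organized differently.

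The paper never solves the resolvent equation. Instead it works directly with the resolvents of the (unknown) solutions: it first proves an a priori bound $|S_\lambda f|\le c\|f\|_p$ for \emph{any} solution of the martingale problem (Lemma \ref{resolconv}, Theorem \ref{vlambda}, Corollary \ref{slambdabeschr}), obtained by approximating $X$ with piecewise-constant-coefficient processes $U^n$ and proving a bound for $V_\lambda^n$ that is uniform in $n$ via the perturbation estimate of Proposition \ref{Lptilde}; it then uses the identity $S_\lambda f=R_\lambda f(x_0)+S_\lambda\mathcal{B}R_\lambda f$ (Proposition \ref{slambdaprop}), applied only to functions $h=f-\lambda R_\lambda f$ for which $R_0h=R_\lambda f\in C_b^2(\R^d)$, so that $\mathcal{B}$ is only ever applied to genuinely smooth functions. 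Taking the difference of two solutions kills the explicit term $R_\lambda f(x_0)$ and yields $\Theta\le\tfrac12\Theta$ for $\Theta=\sup_{\|f\|_p\le1}|S^1_\lambda f-S^2_\lambda f|$, hence $\Theta=0$ because $\Theta$ is finite by the a priori bound; uniqueness then follows from equality of Laplace transforms (Theorem \ref{laplacegleich}). If you want to repair your write-up, you should replace the exact-solvability step by this contraction-on-the-difference argument and add the missing ingredient it relies on, namely the uniform $L^p$-bound on the resolvent of an arbitrary solution.
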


The anisotropic system \eqref{SDEM} has been studied in \cite{JAMIL}, where harmonic functions are shown to satisfy an H\"older estimate.

\subsection*{Notation} 
Let $A\subset\R^d$ be open. We denote by
$C(A)$ the space of all continuous functions on $A$, by
$C_b(A)$ the space of all continuous and bounded functions on $A$, by $C_c(A)$ the space of all continuous functions on $A$ with compact support and by $C_0(A)$ the space of all continuous functions on $A$ vanishing at infinity.
Furthermore, let 
$C_b^2(A)$ be the space of bounded continuous functions on $A$ that have continuous bounded derivatives up to second order. 
Similarly let $C_0^2(A)$ and $C_c^2(A)$ denote the space of all functions in $C_0(A)$ resp. $C_c(A)$ with derivatives up to second order in $C_0(A)$ resp. $C_c(A)$.
The space of all smooth and compactly supported functions on $A$ is denoted by $C_c^{\infty}(A)$.
 For $i\in\N$ we write $c_i$ for positive constants and additionally $c_i=c_i(\cdot)$ if we want to highlight all the quantities the constant depends on.
\subsection*{Structure of the article} This work is organized as follows. In Section \ref{secpre} we provide definitions, constitute sufficient preperation and 
illustrate the underlying L\'evy process.
Section \ref{chaptersec} consists of four subsections. In Subsection \ref{fixed} we study the system of stochastic differential equations with fixed coefficients and prove in 
Subsection \ref{bddsub} boundedness of the resolvent operators for weak solutions to the system. Subsection \ref{auxsub} contains auxiliary results and in Subsection \ref{proofsub} we present the proof of the uniqueness of weak solutions to the system. 
\section{Preliminaries}\label{secpre}
The aim of this section is to give a brief exposition of the system of stochastic differential equations \eqref{SDEM}. 

Let us start with some basic definitions and notations. \\
We denote the Skorohod space of all c\`adl\`ag functions on $[0,\infty)$ with values in $\R^d$ 
by $\mathds{D}([0,\infty))$. For $f\in\mathds{D}([0,\infty))$ let 
\[ f(t-):=\lim\limits_{s\nearrow t} f(s) \quad \text{and} \quad \Delta f(t) = f(t)-f(t-). \]
Let $(\breve{Z_t^i})_{t\geq 0}$ be the $d$-dimensional L\'{e}vy process, defined by $\breve{Z_t^i} = Z_t^ie_i$, where $e_i$ is the $i^{\text{th}}$ standard coordinate vector.
Then $(\breve{Z_t^i})_{t\geq 0}$ is a L\'{e}vy process with L\'{e}vy?Khintchine triplet $(0,0,\breve{\nu_i}(\d h))$, where $\breve{\nu_i}$ is given by
\[ \breve{\nu_i}(\d w) = \frac{c_{\alpha_i}}{|w_i|^{1+\alpha_i}}\, \d w_i \left(\prod_{j\neq i}\delta_{\{0\}}(\d w_j)\right). \]
Obviously, $(Z_t)_{t\geq 0}$ is the sum of the $d$ independent L\'{e}vy processes $\breve{Z_t^i}$, $i=1,\dots,d$ and hence a L\'{e}vy process itself. \\
Using the independence of the $\breve{Z_t^i}$'s, the L\'{e}vy-measure of $(Z_t)_t$ is given as the sum of the $\breve{\nu_i}$'s, i.e.
\[  \nu(\d w)= \sum_{i=1}^{d} \left( \frac{c_{\alpha_i}}{|w_i|^{1+\alpha_i}}\,\d w_i \left(\prod_{j\neq i}\delta_{\{0\}}(\d w_j) \right)\right).\]
The support of this measure is the union of the coordinate axes. Hence $\nu(A)=0$ for every set $A\subset \R^d$, which has an empty intersection with the coordinate axes.\\
The process $Z_t$ makes a jump into the i$^{\text{th}}$ direction of the coordinate axis, whenever $Z_t^i$ makes a jump.\\
For $f\in C_b^2(\R^d)$ let
\begin{equation}\label{gen}
 \mathcal{L}f(x)=\sum_{j=1}^{d} \int_{\R\setminus \{0\}} (f(x+a_{j}(x)h)-f(x)-h\mathds{1}_{\{|h|\leq 1\}}\nabla f(x) \cdot a_j(x))\frac{c_{\alpha_j}}{|h|^{1+\alpha_j}} \d h, 
\end{equation}
where $a_j(x)$ denotes the j$^{th}$ column of the matrix $A(x)$.\\
Let us recall the concept of weak solutions and solutions to the martingale problem.
\begin{definition}
We call a filtered probability space $(\Omega,\mathcal{F},(\mathcal{F}_t)_{t\geq 0}, \mathds{P})$ and stochastic processes 
$(X_t^1,\dots, X_t^d)$ and $(Z_t^1,\dots,Z_t^d)$ weak solution to the system \eqref{SDEM}, starting at $x_0$, if \eqref{SDEM} holds and the processes 
$(Z_t^i)_{i=1,\dots,d}$ are independent one-dimensional symmetric stable processes of index $\alpha_i$ under $\mathds{P}$.
\end{definition}
In particular, the existence of a unique weak solution to \eqref{SDEM} implies that for a given initial distribution, the law of $(X_t)_{t\geq 0}$ is uniquely determined.
For the sake of brevity, we denote weak solutions by $(\Omega,\mathcal{F},(\mathcal{F}_t)_{t\geq 0}, \mathds{P}, X, Z)$.\\
An equivalent formulation of the concept of weak solutions to stochastic differential equations is given by the so-called martingale problem method
which gives us another approach to study solvability of stochastic differential equations by studying the corresponding generator.
\begin{definition}\label{defmart}
Let $\mathcal{L}$ be an operator whose domain includes $C_b^2(\R^d)$.
Let $(X_t)_{t\geq 0}$ be the coordinate maps on $\Omega=\mathds{D}([0,\infty))$, that is $X_{t}(\omega)=\omega(t)$ and $(\mathcal{F}_t)_{t\geq 0}$ be the filtration generated by $(X_t)_{t\geq 0}$. We say a probability measure $\mathds{P}$ is a solution to the martingale problem for $\mathcal{L}$, started at $x_0$, 
if the following two conditions hold:
\begin{enumerate}
 \item $\mathds{P}(X_0=x_0)=1.$
 \item For each $f\in C_b^2(\R^d)$
 \[  f(X_t)-f(X_0)-\int_{0}^{t} \mathcal{L}f(X_s) \d s \] 
 is a $\mathds{P}$-martingale.
 \end{enumerate} 
\end{definition}
We make the following assumptions on the coefficients to the system \eqref{SDEM}, c.f. \cite[Assumption 2.1.]{BCS}.
\begin{assumptions}\label{assumption1}{\ }
\begin{enumerate}
 \item  For every $x\in\R^d$ the matrix $A(x)$ is non-degenerate, that is 
 \[ \inf\limits_{u\in\R^d \colon |u|=1} |A(x)u|>0. \]
  \item The functions $x\mapsto A_{ij}(x)$ are continuous and bounded for all $1\leq i,j\leq d$.
\end{enumerate}
\end{assumptions}

The system \eqref{SDEM} has been studied in the case $\alpha_1=\alpha_2=\cdots=\alpha_d=\alpha\in(0,2)$ by Bass and Chen in their articles \cite{BCS}, \cite{BCR}. 
In \cite{BCS} the authors prove with the help of the martingale problem the existence and uniqueness of a weak solution to \eqref{SDEM} in the 
case $\alpha_1=\alpha_2=\cdots=\alpha_d=\alpha\in(0,2)$. The main tool to obtain uniqueness is by using a bound on the $L^p$-operator norm of the corresponding perturbation integral operator. 
In order to do so the authors use the method of rotations, which seems not to be applicable in the case of different indices.\\
In \cite{BCR} the authors study bounded harmonic functions for the corresponding integral operator and show that such harmonic functions are H\"older continuous. This result has been 
extended in \cite{JAMIL} for the case where the $\alpha_i$'s are allowed to be different.\\
Let us first write the integro-differential operator $\mathcal{L}$ with weighted second order differences.
\begin{lemma}\label{second-order}
 Let $f\in C_b^2(\R^d)$, then
 \[ \mathcal{L}f(x) = \frac{1}{2}\sum_{j=1}^d \int_{\R\setminus\{0\}} (f(x+a_j(x)h)-2f(x)+f(x-a_j(x)h))\frac{c_{{\alpha_j}}}{|h|^{1+\alpha_j}}\,\d h.\]
\end{lemma}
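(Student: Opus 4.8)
The plan is to exploit the reflection symmetry $h\mapsto-h$ of the one-dimensional jump kernel $c_{\alpha_j}|h|^{-1-\alpha_j}$ and to replace, for each $j$, the integrand in \eqref{gen} by its symmetrization; this makes the first-order compensator term $-h\mathds{1}_{\{|h|\le1\}}\nabla f(x)\cdot a_j(x)$ cancel and leaves exactly the weighted second-order difference claimed in the statement.

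First I would fix $j\in\{1,\dots,d\}$ and $x\in\R^d$ and check that the $j$-th integral in \eqref{gen} is absolutely convergent, so that all rearrangements below are licensed. Since $f\in C_b^2(\R^d)$ and $a_j$ is bounded, Taylor's theorem yields a constant $c_1=c_1(f,j)>0$ with
\[ \big|f(x+a_j(x)h)-f(x)-h\,\nabla f(x)\cdot a_j(x)\big|\le c_1 h^2 \qquad\text{for all } x\in\R^d,\ |h|\le 1, \]
so the integrand is $O(h^2)$ as $h\to0$ and hence integrable against $|h|^{-1-\alpha_j}\,\d h$ near the origin; for $|h|>1$ it is bounded by $2\|f\|_\infty$, which is integrable against $|h|^{-1-\alpha_j}\,\d h$ on $\{|h|>1\}$. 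The same estimate shows $f(x+a_j(x)h)-2f(x)+f(x-a_j(x)h)=O(h^2)$ near $0$, so the second-difference integral is absolutely convergent as well.

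Next, writing $g_j(h):=f(x+a_j(x)h)-f(x)-h\mathds{1}_{\{|h|\le1\}}\nabla f(x)\cdot a_j(x)$ and performing the change of variables $h\mapsto-h$, which leaves the measure $c_{\alpha_j}|h|^{-1-\alpha_j}\,\d h$ on $\R\setminus\{0\}$ invariant, I would obtain
\[ \int_{\R\setminus\{0\}} g_j(h)\,\frac{c_{\alpha_j}}{|h|^{1+\alpha_j}}\,\d h=\frac12\int_{\R\setminus\{0\}}\big(g_j(h)+g_j(-h)\big)\,\frac{c_{\alpha_j}}{|h|^{1+\alpha_j}}\,\d h. \]
Because $\mathds{1}_{\{|h|\le1\}}=\mathds{1}_{\{|-h|\le1\}}$, the two linear terms in $g_j(h)+g_j(-h)$ cancel, leaving $g_j(h)+g_j(-h)=f(x+a_j(x)h)-2f(x)+f(x-a_j(x)h)$. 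Substituting this into \eqref{gen} and summing over $j$ gives the asserted identity.

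I do not expect a real obstacle in this lemma; the only point demanding care is the absolute-convergence verification (the first step), since it is what justifies splitting off and recombining the compensator term under the integral and applying the substitution $h\mapsto-h$ termwise.
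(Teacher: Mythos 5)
Your proof is correct and is exactly the argument the paper has in mind: its proof reads only ``Follows immediately by symmetry,'' and your symmetrization of the whole integrand $g_j$ under $h\mapsto -h$ (rather than splitting off the non-integrable compensator term separately), justified by the Taylor-based absolute-convergence check, is the right way to make that one-line remark precise.
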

\begin{proof}
Follows immediately by symmetry.
\end{proof}

Let $(\Omega,\mathcal{F},(\mathcal{F}_t)_{t\geq 0}, \mathds{P}, X, Z)$ be a weak solution to \eqref{SDEM}. Proposition \ref{martingale} shows that any weak solution to \eqref{SDEM} is a solution to the martingale problem for the operator $\mathcal{L}$.
Since there are no significant differences in the proof of Proposition \ref{martingale} and the proof of \cite[Proposition 4.1.]{BCS}, we skip the proof and refer the reader to \cite{BCS}.
\begin{proposition} \label{martingale}
 Suppose $A$ is bounded and measurable. Let $(\Omega,\mathcal{F},(\mathcal{F}_t)_{t\geq 0}, \mathds{P}, X, Z)$ be a weak solution to \eqref{SDEM}.
 If $f\in C_b^2(\R^d)$, then
 \[ f(X_t)-f(X_0)-\int_0^t \mathcal{L}f(X_s) \,\d s \]
 is a $\mathds{P}$-martingale.
\end{proposition}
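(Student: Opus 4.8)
The plan is to derive the martingale $M^f_t := f(X_t)-f(X_0)-\int_0^t \mathcal{L}f(X_s)\,\d s$ directly from It\^o's formula for $\R^d$-valued semimartingales with jumps, applied to a weak solution $(\Omega,\mathcal{F},(\mathcal{F}_t)_{t\geq 0},\mathds{P},X,Z)$ of \eqref{SDEM}; this is exactly the route of \cite[Proposition 4.1]{BCS}, and allowing the indices $\alpha_j$ to differ changes nothing essential because the $d$ coordinate directions are handled one at a time. First I would record the L\'evy--It\^o decomposition of each driver. Writing $N^j$ for the Poisson random measure of jumps of $Z^j$, $\nu_j(\d h)=c_{\alpha_j}|h|^{-1-\alpha_j}\,\d h$ for its intensity, and $\widetilde{N}^j$ for the compensated measure, symmetry of $Z^j$ forces $\int_{0<|h|\leq 1} h\,\nu_j(\d h)=0$, so that $Z^j_t=\int_0^t\int_{0<|h|\leq 1} h\,\widetilde{N}^j(\d s,\d h)+\int_0^t\int_{|h|>1} h\,N^j(\d s,\d h)$ for every $\alpha_j\in(0,2)$. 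Substituting into \eqref{SDEM} exhibits $X$ as a sum of stochastic integrals against the $\widetilde{N}^j$ and the $N^j$; in particular $X$ has no continuous local martingale part, the second order term in It\^o's formula vanishes, and one is left with
\[ f(X_t)-f(X_0)=\int_0^t \nabla f(X_{s-})\cdot \d X_s+\sum_{0<s\leq t}\bigl(f(X_s)-f(X_{s-})-\nabla f(X_{s-})\cdot \Delta X_s\bigr). \]

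Next I would rewrite both terms through the jump measures. Because the $Z^j$ are independent, almost surely no two of them jump simultaneously (equivalently, the L\'evy measure $\nu$ of $Z$ is carried by the coordinate axes, as noted above), so at each jump time $\Delta X_s=a_j(X_{s-})\,\Delta Z^j_s$ for the unique index $j$ that jumps. Hence the jump sum equals $\sum_{j=1}^d\int_0^t\int_{\R\setminus\{0\}}\bigl(f(X_{s-}+a_j(X_{s-})h)-f(X_{s-})-h\,\nabla f(X_{s-})\cdot a_j(X_{s-})\bigr)N^j(\d s,\d h)$, which is absolutely convergent since the integrand is $O(|h|^2)$ as $h\to 0$ while $\int_{|h|\leq 1}|h|^2\,\nu_j(\d h)<\infty$, and $\nu_j(\{|h|>1\})<\infty$. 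Similarly $\int_0^t\nabla f(X_{s-})\cdot \d X_s=\sum_{j}\int_0^t\bigl(\nabla f(X_{s-})\cdot a_j(X_{s-})\bigr)\d Z^j_s$, which splits via the L\'evy--It\^o decomposition into a $\widetilde{N}^j$-integral over $\{|h|\leq 1\}$ and an $N^j$-integral over $\{|h|>1\}$. I would then combine the $\{|h|>1\}$ contributions so that the linear terms cancel, compensate the surviving $N^j$-integrals by adding and subtracting their $\nu_j(\d h)\,\d s$ integrals, and observe that the compensators add up to $\sum_{j}\int_0^t\int_{\R\setminus\{0\}}\bigl(f(X_{s-}+a_j h)-f(X_{s-})-h\mathds{1}_{\{|h|\leq 1\}}\nabla f(X_{s-})\cdot a_j\bigr)\nu_j(\d h)\,\d s=\int_0^t\mathcal{L}f(X_{s-})\,\d s=\int_0^t\mathcal{L}f(X_s)\,\d s$, the last step because $\{s:X_{s-}\neq X_s\}$ is at most countable, hence Lebesgue-null. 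What remains recombines into $M^f_t=\sum_{j=1}^d\int_0^t\int_{\R\setminus\{0\}}\bigl(f(X_{s-}+a_j(X_{s-})h)-f(X_{s-})\bigr)\widetilde{N}^j(\d s,\d h)$.

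Finally I would check that this last object is a genuine martingale and not merely a local one, which is where Assumption~\ref{assumption1}(2) and $f\in C^2_b(\R^d)$ enter: for each $j$ one has $\int_{\R\setminus\{0\}}\bigl|f(x+a_j(x)h)-f(x)\bigr|^2\,\nu_j(\d h)\leq c_1\int_{|h|\leq 1}|h|^2\,\nu_j(\d h)+4\|f\|_\infty^2\,\nu_j(\{|h|>1\})$, which is finite and bounded uniformly in $x$, so each term is a square-integrable martingale. I expect the only real difficulty to be the bookkeeping in the second step: cleanly separating small and large jumps, tracking which pieces are already compensated, and justifying the integrability estimates uniformly in $x$; a minor but genuine point is that for $\alpha_j\leq 1$ the process $Z^j$ is not integrable, so one cannot invoke its own martingale property and must instead rely on the $L^2$-bound near the origin. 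Since none of these points interacts with the relation between the indices $\alpha_j$, the argument of \cite[Proposition 4.1]{BCS} carries over essentially verbatim.
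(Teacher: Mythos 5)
Your proposal is correct and follows essentially the same route as the paper, which simply invokes the It\^o-formula argument of \cite[Proposition 4.1]{BCS}: L\'evy--It\^o decomposition of the drivers, no simultaneous jumps by independence, compensation of the jump measures, and a uniform $L^2$-bound (using boundedness of $A$ and $f\in C_b^2$) to upgrade from local to true martingale. The only cosmetic point is that for $\alpha_j\geq 1$ the integral $\int_{0<|h|\leq 1}h\,\nu_j(\d h)$ is not absolutely convergent, so the vanishing drift should be read off directly from the triplet $(0,0,\nu_j)$ rather than from that integral being zero; this does not affect the argument.
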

\section{Uniqueness}\label{chaptersec}
In this section we prove uniqueness of weak solutions to \eqref{SDEM}. For this purpose we add the
following assumption on the coefficients.
\begin{assumptions}\label{diagonal}
 Suppose $A_{ij}(x)=0$ for all $x\in\R^d$, whenever $i\neq j$.
\end{assumptions}
The aim of this section is to prove Theorem \ref{thm:uniqueness}.
For this purpose we first study the system \eqref{SDEM}, where the coefficients are fixed, and use a perturbation argument in the spirit of \cite{STROOCK}.
\subsection{Perturbation}\label{fixed}
Let $x_0=(x_0^1,\dots,x_0^d)\in\R^d$ be a fixed point.  
We define the process $(U_t)_{t\geq 0}$ by
\[ U_t=U_0+A(x_0)Z_t. \]
Note that $(U_t)_{t\geq 0}$ is an affine transformation of a L\'{e}vy process and has stationary and independent increments and c\`adl\`ag paths. 
Hence $(U_t-U_0)_{t\geq 0}$ is a L\'{e}vy process. 
For $f\in C_b^2(\R^d)$ consider the operator
\begin{equation}\label{generatorL_0}
  \mathcal{L}_0f(x)=\sum_{j=1}^{d} \int_{\R\setminus \{0\}} (f(x+e_jA_{jj}(x_0)h)-f(x)-h\mathds{1}_{\{|h|\leq 1\}}\partial_jf(x)A_{jj}(x_0))\frac{c_{{\alpha_j}}}{|h|^{1+\alpha_j}} \, \d h.
\end{equation}
For a function $f\in L^1(\R^d)$, we define its Fourier transform by
\[ \mathcal{F}f(\xi):=\widehat{f}(\xi):=\int_{\R^d} e^{ix\cdot \xi} f(x)\, \d x, \quad \xi\in\R^d. \]
For $j\in\{1,\dots,d\}$, let
\begin{equation}
  \mathcal{I}_jf(x):=\int_{\R\setminus \{0\}} (f(x+e_jA_{jj}(x_0)h)-f(x)-h\mathds{1}_{\{|h|\leq 1\}}\partial_jf(x)A_{jj}(x_0))\frac{c_{{\alpha_j}}}{|h|^{1+\alpha_j}} \, \d h.
\end{equation}
\begin{lemma}\label{multiplier1}
Let $f\in C_c^{\infty}(\R^d)$. Then
 \[ \widehat{\mathcal{L}_0f}(\xi) = -\sum_{j=1}^d|\xi_j A_{jj}(x_0)|^{\alpha_j}\widehat{f}(\xi) .\]
\end{lemma}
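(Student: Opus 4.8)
The plan is to compute the Fourier transform of each summand $\mathcal{I}_j f$ separately and then add them. Write $b_j := A_{jj}(x_0)$ for brevity; the operator $\mathcal{I}_j$ acts only in the $j$-th coordinate direction, so the computation is essentially one-dimensional. First I would record the elementary transform rules under the convention $\widehat f(\xi)=\int_{\R^d} e^{ix\cdot\xi}f(x)\,\d x$: for $a\in\R$, the translate $f(\cdot + a e_j)$ has Fourier transform $e^{-ia\xi_j}\widehat f(\xi)$, and $\widehat{\partial_j f}(\xi) = -i\xi_j\widehat f(\xi)$. Inserting these formally into the definition of $\mathcal{I}_j$ gives $\widehat{\mathcal{I}_j f}(\xi) = m_j(\xi)\widehat f(\xi)$ with
\[ m_j(\xi) = \int_{\R\setminus\{0\}}\left(e^{-ib_j\xi_j h}-1+ib_j\xi_j h\,\mathds{1}_{\{|h|\le 1\}}\right)\frac{c_{\alpha_j}}{|h|^{1+\alpha_j}}\,\d h. \]

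The first genuine point is to justify this interchange of the Lebesgue integral in $x$ (defining $\mathcal{F}$) with the L\'evy integral in $h$, i.e.\ an application of Fubini's theorem. Since $f\in C_c^{\infty}(\R^d)$, a second-order Taylor estimate gives $|f(x+b_j h e_j)-f(x)-b_j h\,\partial_j f(x)|\le \tfrac12 b_j^2 h^2\|\partial_j^2 f\|_\infty$ on $\{|h|\le 1\}$, with the integrand supported (in $x$) in a fixed compact set; while for $|h|>1$ one has $|f(x+b_j h e_j)-f(x)|\le 2\|f\|_\infty\mathds{1}_{E_h}(x)$ with $|E_h|\le 2|\supp f|$ uniformly in $h$. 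Combined with $\int_{0<|h|\le1}|h|^{1-\alpha_j}\,\d h<\infty$ and $\int_{|h|>1}|h|^{-1-\alpha_j}\,\d h<\infty$, this produces an integrable majorant, so Fubini applies and the displayed formula for $\widehat{\mathcal{I}_j f}$ is rigorous. Equivalently, one may first rewrite $\mathcal{L}_0$ via symmetric second differences in the spirit of Lemma \ref{second-order}, which removes the $\mathds{1}_{\{|h|\le1\}}$ correction term and makes the majorant estimate slightly cleaner.

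Next I would exploit the symmetry of the measure $\frac{c_{\alpha_j}}{|h|^{1+\alpha_j}}\,\d h$ under $h\mapsto -h$: the odd part of the integrand integrates to zero, leaving
\[ m_j(\xi) = \int_{\R\setminus\{0\}}\left(\cos(b_j\xi_j h)-1\right)\frac{c_{\alpha_j}}{|h|^{1+\alpha_j}}\,\d h. \]
Finally, this is precisely the characteristic exponent of the one-dimensional stable process $Z^j$ evaluated at $b_j\xi_j$: by the L\'evy--Khintchine formula, the normalization $\mathds{E}[e^{i\zeta Z_t^j}]=e^{-t|\zeta|^{\alpha_j}}$ forces $\int_{\R\setminus\{0\}}(\cos(\zeta h)-1)\,c_{\alpha_j}|h|^{-1-\alpha_j}\,\d h = -|\zeta|^{\alpha_j}$ for all $\zeta\in\R$ (the $i\zeta h\,\mathds{1}_{\{|h|\le1\}}$ drift term in the exponent again vanishing by symmetry). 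Hence $m_j(\xi) = -|b_j\xi_j|^{\alpha_j} = -|A_{jj}(x_0)\xi_j|^{\alpha_j}$, and summing over $j$ gives $\widehat{\mathcal{L}_0 f}(\xi)=\sum_{j=1}^d m_j(\xi)\widehat f(\xi) = -\sum_{j=1}^d |\xi_j A_{jj}(x_0)|^{\alpha_j}\widehat f(\xi)$, as claimed. The only step demanding real care is the Fubini justification above; once the majorant is in place the rest is a direct, essentially one-dimensional computation with no remaining obstacle.
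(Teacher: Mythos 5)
Your proof is correct and follows essentially the same route as the paper's: Fourier transform each $\mathcal{I}_j$, interchange the $x$- and $h$-integrals, use the symmetry of $c_{\alpha_j}|h|^{-1-\alpha_j}\,\d h$ to reduce to the cosine integral, and identify $\int_{\R\setminus\{0\}}(1-\cos(\zeta h))\,c_{\alpha_j}|h|^{-1-\alpha_j}\,\d h=|\zeta|^{\alpha_j}$ from the normalization of $c_{\alpha_j}$. The only differences are cosmetic: the paper passes to symmetric second differences first and concludes via the substitution $w=\xi_jA_{jj}(x_0)h$, while you keep the compensated first-difference form, justify Fubini with an explicit majorant, and invoke the L\'evy--Khintchine normalization directly.
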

\begin{proof}
By the substitution $w=(\xi_j A_{jj}(x_0))h$ and Fubini's theorem we get
\begin{align*}
\widehat{\mathcal{I}_jf}(\xi) &= \frac12\int_{\R^d}e^{i\xi\cdot x} \int_{\R\setminus \{0\}} (f(x+e_jA_{jj}(x_0)h)-2f(x)+f(x-e_jA_{jj}(x_0)h))\frac{c_{{\alpha_j}}}{|h|^{1+\alpha_j}} \, \d h\, \d x \\
& = \frac12\widehat{f}(\xi)\int_{\R\setminus \{0\}}(e^{ih\xi_jA_{jj}(x_0)}-2+e^{-ih\xi_jA_{jj}(x_0)})\frac{c_{{\alpha_j}}}{|h|^{1+\alpha_j}} \, \d h \\
& = \widehat{f}(\xi)\int_{\R\setminus \{0\}}(\cos(h\xi_jA_{jj}(x_0))-1) \frac{c_{{\alpha_j}}}{|h|^{1+\alpha_j}}\, \d h \\
& = -\widehat{f}(\xi)\int_{\R\setminus \{0\}}(1-\cos(h\xi_jA_{jj}(x_0))) \frac{c_{{\alpha_j}}}{|h|^{1+\alpha_j}}\, \d h \\
& = -\widehat{f}(\xi)|(\xi_jA_{jj}(x_0))|^{\alpha_j}\int_{\R\setminus \{0\}}(1-\cos(w)) \frac{c_{{\alpha_j}}}{|w|^{1+\alpha_j}}\, \d w.
\end{align*}
The choice of $c_{{\alpha_j}}$ yields
\[ \int_{\R\setminus \{0\}}(1-\cos(w)) \frac{c_{{\alpha_j}}}{|w|^{1+\alpha_j}}\, dw=1, \]
which proves the assertion.
\end{proof}
Note that by Lemma \ref{multiplier1} the characteristic function of $(U_t^j)_{t\geq 0}$ is given by
 \begin{align*}
 \mathds{E}\left(e^{i\xi\cdot U_t^j}\right) & := \exp(-t\Psi_j(\xi_j)) =\exp\left( -t|\xi_jA_{jj}(x_0)|^{\alpha_j} \right).
\end{align*}
Since the $Z_t^i$'s are independent, the $U_t^i$'s are also independent.
Therefore the characteristic function of $(U_t)_{t\geq 0}$ is the product of the characteristic functions of $(U_t^j)_{t\geq 0},$ i.e. 
 \begin{equation}\label{characteristicfunction}
 \mathds{E}\left(e^{i\xi\cdot U_t}\right) = \exp\left(-t\sum_{j=1}^d\Psi_j(\xi_j)\right) =: \exp(-t\Psi(\xi)) .
\end{equation}

Next we show a scaling result for the transition density function of $(U_t)_{t\geq 0}$. 
It is reasonable to first study the transition density of $Z_t$, 
since $(U_t)_{t\geq 0}$ is given as an affine transformation of $(Z_t)_{t\geq 0}$. \\
By Lemma \ref{multiplier1}, we deduce that the characteristic function of $(Z_t^j)_{t\geq 0}$ is given by
 \begin{align*}
 \mathds{E}\left(e^{i\xi\cdot Z_t^j}\right) & := \exp(-t\psi_j(\xi_j)) =\exp\left( -t|\xi_j|^{\alpha_j} \right).
\end{align*}
Note that, since $\exp(-\psi_j(\cdot))\in L^1(\R)$, the inverse Fourier-Transform exists and therefore the 
transition density function $q_t^j$ of $Z_t^j$ exists and is given by
\begin{equation}\label{transdens}
 q_t^j(x_j)=\mathcal{F}^{-1}\left( e^{-t\psi_j} \right)(x_j)=\frac{1}{(2\pi)}\left(\int_{\R}e^{-ix_jy_j}e^{-t\psi_j(y_j)} \, \d y\right).
\end{equation}
Hence
\begin{equation}\label{transdens1}
 \int_{\R} e^{ix\xi_j} q_t^j(x)\, \d x = \exp(-t|\xi_j|^{\alpha_j}).
\end{equation}
Since the processes $Z_t^j$, $j\in\{1,\dots,d\}$ are independent and $Z_t=(Z_t^1,\dots,Z_t^d)$, 
the transition density function of $(Z_t)_{t\geq 0}$ is given by
\begin{equation}\label{transitionprod}
 q_t(x) = \prod_{j=1}^d q_t^j(x_j).
\end{equation}
The scaling property for one-dimensional symmetric $\alpha_j$-stable processes states\\
 $q_t^j(x_j) = t^{-1/\alpha_j} q_1(t^{-1/\alpha_j}x_j)$. See e.g. \cite[Chapter 8]{BERT} 
for more details.  \\
Using this scaling property of the one-dimensional processes and \eqref{transitionprod} we get the following scaling property for the transition density of $(Z_t)_{t\geq 0}$
\[ q_t(x) = \prod_{j=1}^d q_t^j(x_j) = \prod_{j=1}^d t^{-1/\alpha_j} q_1^j(t^{-1/\alpha_j}x_j) = t^{-\sum_{k=1}^d 1/\alpha_k} q_1(t^{-1/\alpha_1}x_1,\dots,t^{-1/\alpha_d}x_d).\]
We next deduce a scaling property for the transition density of $(U_t)_{t\geq 0}$.\\
Let $B\in\mathcal{B}(\R^d)$. Then by using the substitution $z=A(x_0)x+U_0$
\begin{align*}
 \mathds{P}(U_t\in B) & =  \mathds{P}(U_0 + A(x_0)Z_t \in B) = \mathds{P}(A(x_0)Z_t \in B-U_0) = \mathds{P}(Z_t \in A(x_0)^{-1}(B-U_0)) \\
 & = \int_{\R^d} q_t(x) \, \mathds{1}_{A(x_0)^{-1}(B-U_0)}(x) \, \, dx \\
 & = \frac{1}{|\det(A(x_0))|}\int_{B} q_t(A(x_0)^{-1}(z-U_0)) \, dz.
\end{align*}
Set 
\begin{equation}\label{transitiontilde}
  p^j_t(x) = \frac{1}{A_{jj}(x_0)} q_t^j((A(x_0)^{-1}(x-U_0))_j).
\end{equation}
Then the transition density $p_t(x)$ of $(U_t)_{t\geq 0}$ is given by
\begin{equation}\label{transitionY}
\begin{aligned}
  \frac{1}{|\det(A(x_0))|}q_t(A(x_0)^{-1}(x-U_0)) &= \frac{1}{|\det(A(x_0))|} \prod_{j=1}^d q_t^j((A(x_0)^{-1}(x-U_0))_j) \\
  = \prod_{j=1}^d p^j_t(x) = p_t(x).
\end{aligned}
\end{equation}
Moreover, we have 
\begin{equation}\label{scalingpt}
  p_t(x) = \frac{t^{-\sum_{k=1}^d 1/\alpha_k}}{|\det(A(x_0))|}q_1(\varXi(t)(A(x_0)^{-1}(x-U_0))),
\end{equation}
where 
\[ \varXi(t) = \diag(t^{-1/\alpha_1},\dots,t^{-1/\alpha_d})= \begin{pmatrix}
t^{-1/\alpha_1}	& 0	& \dots	 & 0      \\
0	& t^{-1/\alpha_2} 	& \dots  & 0 	  \\
\vdots	& 0 	& \ddots & \vdots \\
0 	& \dots & 0	 & t^{-1/\alpha_d}
\end{pmatrix}. \]
We define the transition semigroup $(P_t)_{t\geq 0}$ of $(U_t)_{t\geq 0}$ and $(Q_t)_{t\geq 0}$ of $(Z_t)_{t\geq 0}$ on $C_{0}(\R^d)$ by
\[ P_tf(x)= \int_{\R^d}p_t(x-y)f(y) \, \d y=\mathds{E}[f(U_t+x)]=:\mathds{E}^{x}[f(U_t)] \]
and
\[ Q_tf(x)= \int_{\R^d}q_t(x-y)f(y) \, \d y=\mathds{E}[f(Z_t+x)]=:\mathds{E}^{x}[f(Z_t)]. \]
By e.g. \cite[Theorem 31.5]{SAT} the operators $\{P_t : t\geq 0\}$ indeed define a strongly continuous semigroup on $C_0(\R^d)$ with operator norm $\|P_t\|=1$.
Note that $P_tf$ is also well-defined for $f\in C_b(\R^d)$ but, in general, $(P_t)_{t\geq 0}$ is not strongly continuous on $C_b(\R^d)$.  
We now state an important result on the limit behavior of the semigroup.
\begin{theorem}\label{convhg}
 Let $f\in C_0(\R^d)$. Then
 \begin{equation}
  \lim\limits_{t\to\infty} \|P_tf\|_\infty = 0.
 \end{equation}
 \end{theorem}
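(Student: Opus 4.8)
The plan is to prove the decay first for compactly supported $f$ and then extend to all of $C_0(\R^d)$ by density together with the contraction estimate $\|P_t\|=1$ recalled just above. Concretely: given $\varepsilon>0$, pick $g\in C_c(\R^d)$ with $\|f-g\|_\infty<\varepsilon$; then $\|P_tf\|_\infty\le\|P_t(f-g)\|_\infty+\|P_tg\|_\infty\le\varepsilon+\|P_tg\|_\infty$, so it suffices to show $\|P_tg\|_\infty\to0$ for $g\in C_c(\R^d)$ and afterwards let $\varepsilon\to0$.

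For the core estimate, write $K=\supp g$ and use $P_tg(x)=\int_{\R^d}p_t(x-y)g(y)\,\d y$ to get $|P_tg(x)|\le\|g\|_\infty\int_K p_t(x-y)\,\d y$. Into $\int_K p_t(x-y)\,\d y$ I would substitute the scaling identity \eqref{scalingpt} and then change variables $u=\varXi(t)A(x_0)^{-1}(x-y-U_0)$; the Jacobian of this anisotropic rescaling is exactly $t^{-\sum_{k=1}^d1/\alpha_k}/|\det A(x_0)|$, which cancels the prefactor appearing in \eqref{scalingpt}. Hence $\int_K p_t(x-y)\,\d y=\int_{S_{t,x}}q_1(u)\,\d u$, where $S_{t,x}$ is the image of $K$ under that affine map, a translate of $\varXi(t)A(x_0)^{-1}K$, whose Lebesgue measure is $|S_{t,x}|=t^{-\sum_{k=1}^d1/\alpha_k}|K|/|\det A(x_0)|$, independent of $x$.

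To conclude, note that each one-dimensional symmetric stable density $q_1^j$ is bounded, so $q_1=\prod_jq_1^j$ is bounded, and therefore
\[ \int_{S_{t,x}}q_1(u)\,\d u\le\|q_1\|_\infty\,|S_{t,x}|=\|q_1\|_\infty\,\frac{t^{-\sum_{k=1}^d1/\alpha_k}}{|\det A(x_0)|}\,|K|. \]
Since each $\alpha_j\in(0,2)$, the exponent $\sum_{k=1}^d1/\alpha_k$ is strictly positive, so the right-hand side (uniform in $x$) tends to $0$ as $t\to\infty$; thus $\|P_tg\|_\infty\to0$, and combining with the reduction step finishes the proof. (Alternatively, one can dispense with boundedness of $q_1$ and simply invoke absolute continuity of the finite measure $q_1(u)\,\d u$ with respect to Lebesgue measure, together with $|S_{t,x}|\to0$.)

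The only delicate point is the second step: one must keep careful track of the Jacobian of the direction-dependent dilation $\varXi(t)$ so that the $t$-dependent normalizing constant in \eqref{scalingpt} is absorbed and what remains is precisely the shrinking volume of the rescaled support of $g$; uniformity in $x$ is then automatic by translation invariance of Lebesgue measure.
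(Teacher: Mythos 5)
Your proof is correct and takes essentially the same approach as the paper: the key mechanism in both is the scaling identity \eqref{scalingpt}, which produces a factor $t^{-\sum_{k=1}^d 1/\alpha_k}\to 0$ uniformly in $x$ for the contribution coming from a bounded region (the paper bounds $p_t\le c\,t^{-\sum_k 1/\alpha_k}/|\det A(x_0)|$ directly, you perform the equivalent change of variables). The only cosmetic difference is the reduction step — you approximate $f$ by a compactly supported $g$ and use $\|P_t\|=1$, while the paper splits the integration domain into $B_R(0)$ and its complement using the smallness of $f$ at infinity.
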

\begin{proof}
Let $\epsilon>0$. Choose $R>1$ such that $|f(y)| \leq \epsilon/2$ for all $y\in \R^d\setminus B_R(0)$.
 Then
 \begin{align*}
  P_tf(x)& =\int_{\R^d}p_t(x-y)f(y) \, \d y = \int_{B_R(0)}p_t(x-y)f(y) \, \d y + \int_{\R^d\setminus B_R(0)}p_t(x-y)f(y) \, \d y\\
  & :=(I)+(II).
 \end{align*}
For each $t\geq 0$, we have 
\[ (II)\leq \frac{\epsilon}{2} \int_{\R^d\setminus B_R(0)}p_t(x-y) \, \d y \leq \frac{\epsilon}{2} \underbrace{\int_{\R^d}p_t(x-y) \, \d y}_{=1} = \frac{\epsilon}{2}. \]
Moreover by \eqref{scalingpt}, we know there is a consant $c_1>0$ such that
\[ p_t(x) \leq c_1\frac{t^{-\sum_{k=1}^d 1/\alpha_k}}{|\det(A(x_0))|}. \]
Thus, 
\begin{align*}
 (I)  &= \int_{B_R(0)}p_t(x-y)f(y) \, \d y 
  \leq c_1 \frac{t^{-\sum_{k=1}^d 1/\alpha_k}}{|\det(A(x_0))|}\int_{B_R(0)}f(y) \, \d y \\
 & \leq c_1\|f\|_\infty \frac{t^{-\sum_{k=1}^d 1/\alpha_k}}{|\det(A(x_0))|} |B_R(0)|.
\end{align*}
Choose $t_0 \geq 0$ such that for all $t\geq t_0$
\[ \|f\|_\infty \frac{t^{-\sum_{k=1}^d 1/\alpha_k}}{|\det(A(x_0))|} |B_R(0)|\leq \frac{\epsilon}{2}. \]
Note that the choice of $t_0$ is independent of $x$. Hence the assertion follows.
\end{proof}
We introduce some important operators associated to the family of operators $(P_t)_{t\geq 0}$ on $C_b(\R^d)$.
From \eqref{transdens} and \eqref{transitiontilde} we immediately see $p_t(z)=p_t(-z)$ for every $z\in\R^d$. \\
Hence there exists a positive and symmetric potential density function $r_\lambda$ with respect to $(U_t)_{t\geq 0}$, that is
\begin{equation}\label{potential density}
 0<r_{\lambda}(y-x)=r_\lambda (x-y):=\int_0^\infty e^{-\lambda t}p_t(x-y) \, \d t.
\end{equation}
Let $f\in C_b(\R^d)$. For $\lambda>0$ we define the $\lambda$-resolvent operator of $(U_t)_{t\geq 0}$ by
\begin{equation}\label{resolventop}
 R_{\lambda} f(x) := \int_{\R^d} f(y)r_{\lambda}(x-y) \, \d y = \int_0^\infty e^{-\lambda t} P_tf(x) \, \d t = \mathds{E}^x \left[\int_0^\infty e^{-\lambda t} f(U_t)\, \d t\right].
\end{equation}
The resolvent operator describes the distribution of the process evaluated at independent exponential times.
That is, if $\tau=\tau(\lambda)$ has exponential law with parameter $\lambda>0$ and is independent of $(U_t)_{t\geq 0}$, then
\[ \mathds{E}[f(U_{\tau})] = \lambda R_\lambda f. \]
It is often more convenient to work with the resolvent operators than with the semigroup, thanks to the smoothing effect
of the Laplace transform and to the lack of memory of exponential laws. \\
To study these objects in detail, we first have to define for $\lambda\geq0$ the $\lambda$-potential measures $V^{\lambda}(x,\cdot)$, $x\in\R^d$ on 
$\mathcal{B}(\R^d)$ by
\begin{equation}
 V^{\lambda}(x,B) = \mathds{E}^x \left[ \int_0^\infty e^{-\lambda t}\mathds{1}_{\{U_t\in B\}} \, \d t \right] \ \text{ for } B\in\mathcal{B}(\R^d).
\end{equation}
Note that $V^\lambda$ is obviously well-defined for all $\lambda>0$.
Since $U_t(\omega)$ is measurable in $(0,\infty)\times\Omega$, the application of Fubini's theorem implies
\begin{align*}
 V^\lambda(B)&=\mathds{E}\left[ \int_0^\infty e^{-\lambda t} \mathds{1}_B(U_t) \, \d t \right] =  \int_0^\infty e^{-\lambda t} \mathds{E}\left[\mathds{1}_B(U_t)\right] \, \d t \\
 &= \int_0^\infty e^{-\lambda t} \mathds{P}(U_t\in B) \, \d t \leq \int_0^\infty e^{-\lambda t} \, \d t=\frac{1}{\lambda}.
\end{align*}
Clearly, this argument is not valid for $\lambda=0$. This case will be studied separately at a later point. 
The $0$-potential measure will be denoted by $V(x,B)$ and is called potential measure.
By \eqref{resolventop} and the definition of $V^{\lambda}$ we obtain an additional representation of the $\lambda$-resolvent operator on $C_b(\R^d)$ by
\[ R_{\lambda}f(x) = \int_{\R^d} f(y) V^{\lambda}(x,\, \d y). \]
Recall the following properties.
\begin{lemma}\label{resolventident}
Let $f\in C_b(\R^d)$. Then we have the following properties.
 \begin{enumerate}
  \item $R_{\lambda}f-R_{\mu}f = (\mu-\lambda)R_{\lambda}R_{\mu}f$ \, for $\lambda,\mu>0$,
  \item $R_{\lambda}R_{\mu}f=R_{\mu}R_{\lambda}f$.
 \end{enumerate}
\end{lemma}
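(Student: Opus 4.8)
The plan is to deduce both identities from the Chapman--Kolmogorov relation $P_{t+s}=P_tP_s$ (for $t,s\geq0$) together with Fubini's theorem. Since $(U_t-U_0)_{t\geq0}$ is a L\'evy process, its transition densities satisfy $p_{t+s}=p_t*p_s$, and hence for every $f\in C_b(\R^d)$ one has $P_{t+s}f=P_tP_sf=P_sP_tf$; here the interchange of integrals defining the convolution is legitimate because $f$ is bounded and $p_t,p_s$ are probability densities.

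Using the representation $R_\lambda f(x)=\int_0^\infty e^{-\lambda t}P_tf(x)\,\d t$ from \eqref{resolventop} and the bound $\|P_tf\|_\infty\leq\|f\|_\infty$, the function $(t,s)\mapsto e^{-\lambda t-\mu s}P_{t+s}f(x)$ is measurable (indeed $(t,x)\mapsto P_tf(x)$ is continuous on $(0,\infty)\times\R^d$ by smoothness of $p_t$) and is dominated by the integrable function $\|f\|_\infty e^{-\lambda t-\mu s}$ on $[0,\infty)^2$. Fubini's theorem therefore yields
\[ R_\lambda R_\mu f(x)=\int_0^\infty\int_0^\infty e^{-\lambda t-\mu s}\,P_{t+s}f(x)\,\d s\,\d t. \]
Assertion (2) is now immediate: interchanging the names of the two integration variables in this symmetric double integral turns $R_\lambda R_\mu f(x)$ into $R_\mu R_\lambda f(x)$.

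For assertion (1), I would substitute $u=t+s$ in the inner integral, so that $\int_0^\infty e^{-\mu s}P_{t+s}f(x)\,\d s=e^{\mu t}\int_t^\infty e^{-\mu u}P_uf(x)\,\d u$, and then interchange the order of integration over the region $\{0\leq t\leq u<\infty\}$ to obtain
\[ R_\lambda R_\mu f(x)=\int_0^\infty e^{-\mu u}P_uf(x)\left(\int_0^u e^{(\mu-\lambda)t}\,\d t\right)\d u. \]
When $\lambda\neq\mu$ the inner integral equals $(e^{(\mu-\lambda)u}-1)/(\mu-\lambda)$, and splitting the outer integral accordingly gives $R_\lambda R_\mu f(x)=\bigl(R_\lambda f(x)-R_\mu f(x)\bigr)/(\mu-\lambda)$, which is exactly (1); the case $\lambda=\mu$ makes (1) read $0=0$ and is trivial.

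No step presents a genuine difficulty: the only points requiring a word of care are the joint measurability and integrability needed to apply Fubini's theorem --- both handled by $\|P_t\|=1$ and the smoothness of $p_t$ for $t>0$ --- and the Chapman--Kolmogorov identity, which is built into the L\'evy property of $(U_t-U_0)_{t\geq0}$ recorded above.
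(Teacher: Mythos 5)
Your proof is correct: the paper states this lemma without proof as a standard fact about resolvents, and your argument (Chapman--Kolmogorov plus Fubini, then the substitution $u=t+s$) is precisely the standard justification, with the integrability and measurability points needed for Fubini properly addressed via $\|P_tf\|_\infty\leq\|f\|_\infty$ and the exponential weights. Nothing further is needed.
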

Let us define 
\begin{equation}\label{beta}
 \beta=\sum_{j=1}^d \frac{1}{\alpha_j}.
\end{equation}
Let us prove some elementary facts about $R_{\lambda}$ and $P_t$.
\begin{proposition}\label{resolventenabsch} Let $\lambda>0$ and $f\in C_b(\R^d)$.
\begin{enumerate}
 \item  If $p\in[1,\infty]$, then
 \[ \|R_{\lambda}f\|_p \leq \frac{\|f\|_p}{\lambda}. \]
 \item  If $p\in(1,\infty]$, then
 \begin{equation}\label{Ptestimate}
   |P_tf(x)| \leq t^{-\beta/p}\|p_1(\cdot)\|_q\|f\|_p, 
 \end{equation}
 where $q$ is the conjugate exponent to $p$.
 \item  If $p>\beta$, then
 \[ |R_{\lambda}f(x)| \leq c_1\|f\|_p, \]
 where $c_1=\|p_1(\cdot)\|_q \int_0^\infty e^{-\lambda t} t^{-\beta/p} dt.$
\end{enumerate}
\end{proposition}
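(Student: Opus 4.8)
The plan is to handle the three estimates in order, exploiting the two representations $R_\lambda f = \int_0^\infty e^{-\lambda t}P_t f\,\d t = \int_{\R^d} f(y)V^\lambda(x,\d y)$ and the scaling identity \eqref{scalingpt}. Throughout one may assume $\|f\|_p<\infty$, since otherwise the claimed bounds are vacuous.

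For (1), I would first observe that $P_t f = p_t * f$ (using $p_t(z)=p_t(-z)$), so Young's convolution inequality together with $\int_{\R^d}p_t(z)\,\d z = 1$ gives $\|P_t f\|_p \le \|p_t\|_1\|f\|_p = \|f\|_p$ for every $p\in[1,\infty]$ (for $p=\infty$ this is also immediate from $|P_tf(x)|\le\|f\|_\infty\int p_t=\|f\|_\infty$). Applying Minkowski's integral inequality to $R_\lambda f = \int_0^\infty e^{-\lambda t}P_t f\,\d t$ then yields $\|R_\lambda f\|_p \le \int_0^\infty e^{-\lambda t}\|P_t f\|_p\,\d t \le \|f\|_p\int_0^\infty e^{-\lambda t}\,\d t = \|f\|_p/\lambda$.

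For (2), I bound $|P_t f(x)| = \big|\int_{\R^d} p_t(x-y)f(y)\,\d y\big|$ by Hölder's inequality by $\|p_t(x-\cdot)\|_q\|f\|_p = \|p_t\|_q\|f\|_p$, where $q$ is conjugate to $p$; note $q<\infty$ since $p>1$, and $p_1\in L^q(\R^d)$ because each one-dimensional stable density $q_1^j$ is bounded and integrable, hence so is the product defining $p_1$. The crucial step is the scaling of $\|p_t\|_q$. Rewriting \eqref{scalingpt} in the purely self-similar form $p_t(x)=t^{-\beta}p_1\big(A(x_0)\varXi(t)A(x_0)^{-1}(x-U_0)+U_0\big)$ and substituting $u=A(x_0)\varXi(t)A(x_0)^{-1}(x-U_0)+U_0$, whose Jacobian has absolute value $|\det\varXi(t)| = t^{-\beta}$, I get $\|p_t\|_q^q = t^{-\beta q}\, t^{\beta}\,\|p_1\|_q^q$, i.e. $\|p_t\|_q = t^{-\beta(q-1)/q}\|p_1\|_q = t^{-\beta/p}\|p_1\|_q$. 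Combining these gives exactly \eqref{Ptestimate} (and for $p=\infty$, $q=1$, it reduces consistently to $|P_tf(x)|\le\|f\|_\infty$, since $\|p_t\|_1=1=t^{-\beta/\infty}$).

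For (3), I insert \eqref{Ptestimate} into $R_\lambda f(x)=\int_0^\infty e^{-\lambda t}P_t f(x)\,\d t$ to obtain $|R_\lambda f(x)| \le \|p_1\|_q\|f\|_p\int_0^\infty e^{-\lambda t}t^{-\beta/p}\,\d t$; the integral converges at $t=0$ precisely because $\beta/p<1$, i.e. $p>\beta$ (the factor $e^{-\lambda t}$ takes care of $t\to\infty$), so $c_1 = \|p_1\|_q\int_0^\infty e^{-\lambda t}t^{-\beta/p}\,\d t<\infty$ as claimed. The only nonroutine point is the scaling computation for $\|p_t\|_q$ in part (2): massaging \eqref{scalingpt} into a clean scaling form and correctly tracking the Jacobian of the anisotropic dilation $\varXi(t)$ conjugated by $A(x_0)$. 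Everything else is Young's inequality, Hölder's inequality, Minkowski's integral inequality, and an elementary Gamma-type integral.
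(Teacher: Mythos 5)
Your proposal is correct and follows essentially the same route as the paper: Young's convolution inequality plus Minkowski's integral inequality for (1), Hölder's inequality combined with the scaling of $\|p_t\|_q$ derived from \eqref{scalingpt} for (2), and integrating \eqref{Ptestimate} against $e^{-\lambda t}$ for (3), with $p>\beta$ ensuring convergence at $t=0$. Your self-similar rewriting of $p_t$ in terms of $p_1$ and the explicit Jacobian of the conjugated dilation is just a repackaging of the paper's norm computation via $q_1$ and $\det\varXi(t)$, so there is no substantive difference.
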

\begin{proof}
The idea of the proof goes back to \cite[Proposition 2.2]{BCS}.\\
Without loss of generality we can assume $f\in L^p(\R^d)$. Otherwise the assertions are trivially true.
\begin{enumerate}
 \item   By Young's inequality and the conservativeness of $p_t$, we have
 \[ \|P_tf\|_p \leq \|p_t\|_1\|f\|_p=\|f\|_p. \]
 Therefore by Minkowski's inequality
 \[ \|R_\lambda f\|_p \leq \int_0^\infty e^{-\lambda t} \|P_tf\|_p\, \d t \leq \frac{1}{\lambda} \|f\|_p. \]
 \item  By H\"older's inequality,
 \[ |P_tf(x)|=\left|\int_{\R^d} p_t(x-y) f(y)\, \d y \right| \leq \|f\|_p \|p_t(x-\cdot)\|_q=\|f\|_p\|p_t(\cdot)\|_q. \]
 Using the scaling property for $p_t$, we get
 \[ p_t(x) = \frac{t^{-\beta}}{|\det(A(x_0))|}q_1(\varXi(t)(A(x_0)^{-1}(x-U_0))). \]
 Hence 
 \begin{align*}
  \|p_t(\cdot)\|_q & = \| \frac{t^{-\beta}}{|\det(A(x_0))|}q_1(\varXi(t)(A(x_0)^{-1}(\cdot-U_0)))\|_q \\
  & =  \frac{t^{-\beta}}{|\det(A(x_0))|}\left(\det(\varXi(t))^{-1}\right)^{1/q} \|q_1((A(x_0)^{-1}(\cdot-U_0)))\|_q \\
  & =  \frac{t^{-\beta}t^{\beta/q}}{|\det(A(x_0))|}\|q_1((A(x_0)^{-1}(\cdot-U_0)))\|_q \\
  & =  t^{-\beta\frac{q-1}{q}}\|p_1(\cdot)\|_q = t^{-\beta/p}\|p_1(\cdot)\|_q.
 \end{align*}
 \item Using the previous estimate,
\begin{align*}
 |R_\lambda f(x)| = \left| \int_{\R^d} e^{-\lambda t} P_tf(x) \, \d t \right| \leq \|p_1(\cdot)\|_q \left(\int_0^\infty e^{-\lambda t} t^{-\beta/p} \, \d t\right)\|f\|_p.
\end{align*}
\end{enumerate}
\end{proof}
Let $\mathcal{A}$ be the infinitesimal generator of the semigroup $P_t$ on $C_0(\R^d)$ with domain $D(\mathcal{A})$.
Note that $\mathcal{L}_0=\mathcal{A}$ on $C_0^2(\R^d)$.\\
We now study $\lambda$-potential measures for the case $\lambda=0$. First, we give the definition of the potential operator.
\begin{definition}
 The potential operator $(N,D(N))$ for $(P_t)_{t\geq 0}$ is the operator on $C_0(\R^d)$, defined by
 \[ Nf(x)= \lim\limits_{t\to\infty} \int_0^t P_sf(x)\, \, \d s, \]
 where $f\in D(N):=\{f\in C_0(\R^d)\colon Nf$ exists in $ C_0(\R^d)\}$. 
\end{definition}
Next we state a proposition, which shows that $(N,D(N))$ plays the role of an "inverse" operator to $-\mathcal{L}_0$. 
Let $R(N)$ denote the range of the operator $N$.
\begin{proposition}[{\cite[Proposition 11.9.]{BERG}}]\label{berg119}
The following three conditions are equivalent:
\begin{enumerate}
 \item[(i)] $D(N)$ is dense in $C_0(\R^d)$,
 \item[(ii)] $R(N)$ is dense in $C_0(\R^d)$,
 \item[(iii)] $\lim\limits_{t\to\infty} P_tf=0$ for all $f\in C_0(\R^d)$.
\end{enumerate}
When conditions (i)-(iii) are fulfilled the potential operator is a densely defined, closed operator in $C_0(\R^d)$, and the infinitesimal generator $A$ 
is injective and satisfies
\[ N=-\mathcal{A}^{-1} \quad \text{and} \quad \mathcal{A} = -N^{-1}. \]
\end{proposition}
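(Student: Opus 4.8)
This statement is a classical fact about potential operators of strongly continuous contraction semigroups (it is the cited result of Berg); here is the structure of a proof. The backbone is a pair of inversion identities between $N$ and $\mathcal{A}$. First, suppose $f\in D(N)$ and put $g:=Nf=\lim_{t\to\infty}u_t$ with $u_t:=\int_0^tP_sf\,\d s$; then $u_{t+h}-u_t=\int_t^{t+h}P_sf\,\d s\to 0$ for each fixed $h>0$, and since $P_hu_t-u_t=u_{t+h}-u_t-\int_0^hP_sf\,\d s$, applying the contraction $P_h$ and letting $t\to\infty$ gives $P_hg-g=-\int_0^hP_sf\,\d s$. Dividing by $h$ and letting $h\downarrow0$ (strong continuity of $(P_t)$) shows $g\in D(\mathcal{A})$ and $\mathcal{A}g=-f$. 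Hence $N$ is injective, $-\mathcal{A}N=\mathrm{id}_{D(N)}$, $D(N)\subseteq R(\mathcal{A})$, $R(N)\subseteq D(\mathcal{A})$, and applying $N$ to $f=-\mathcal{A}g$ yields $N(-\mathcal{A})=\mathrm{id}_{R(N)}$. Second, for $f\in D(\mathcal{A})$ one has $\int_0^tP_s\mathcal{A}f\,\d s=P_tf-f$, so whenever $P_tf$ converges as $t\to\infty$, $\mathcal{A}f\in D(N)$.

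Assume now (iii). If $\mathcal{A}h=0$, then $P_th=h$ for all $t$, so $h=\lim_tP_th=0$ and $\mathcal{A}$ is injective. By the second identity (the limit being $0$ by (iii)), $R(\mathcal{A})\subseteq D(N)$, so $D(N)=R(\mathcal{A})$; combining this with the first identity and injectivity of $\mathcal{A}$ gives $N=-\mathcal{A}^{-1}$ and $\mathcal{A}=-N^{-1}$, and since generators are closed and $\mathcal{A}$ is injective, $N=-\mathcal{A}^{-1}$ is a closed operator. Moreover $R(N)=-\mathcal{A}^{-1}\big(R(\mathcal{A})\big)=D(\mathcal{A})$ is dense, which is (ii). Finally $D(N)=R(\mathcal{A})$ is dense: for every $f\in C_0(\R^d)$, $f-\lambda R_\lambda f=-\mathcal{A}R_\lambda f\in R(\mathcal{A})$, while for each $T>0$ the estimate $\|\lambda R_\lambda f\|_\infty\le\lambda T\|f\|_\infty+\sup_{t\ge T}\|P_tf\|_\infty$ together with (iii) gives $\lim_{\lambda\downarrow0}\|\lambda R_\lambda f\|_\infty=0$, so $f\in\overline{R(\mathcal{A})}$; this is (i).

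It remains to prove (ii)$\Rightarrow$(iii) and (i)$\Rightarrow$(iii). If $g\in R(N)$, say $g=Nf$, then $g\in D(\mathcal{A})$ with $\mathcal{A}g=-f$, hence $P_tg-g=\int_0^tP_s\mathcal{A}g\,\d s=-\int_0^tP_sf\,\d s\to-Nf=-g$, i.e. $P_tg\to0$; together with density of $R(N)$ and $\|P_t\|=1$, a $3\varepsilon$-argument yields (iii). If $f\in D(N)$, then $u(t)=\int_0^tP_sf\,\d s$ is $C^1$ with $u'(t)=P_tf$, $u(t)\to Nf$, and $u'$ is \emph{uniformly} continuous since $\|u'(t+h)-u'(t)\|=\|P_t(P_hf-f)\|\le\|P_hf-f\|\to0$ uniformly in $t$; a uniformly continuous derivative of a convergent function must tend to $0$ (else one could find $t_n\to\infty$ and $\delta>0$ with $\|u(t_n+\delta)-u(t_n)\|\ge\delta\varepsilon/2$, contradicting $u(t)\to Nf$), so $P_tf\to0$, and density of $D(N)$ with $\|P_t\|=1$ again gives (iii).

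The main obstacle is this last step: extracting $\lim_{t\to\infty}P_tf=0$ from the mere convergence of $\int_0^tP_sf\,\d s$. This is a Tauberian phenomenon that genuinely relies on strong continuity of $(P_t)$ on $C_0(\R^d)$ through uniform continuity of the orbit $t\mapsto P_tf$; dropping strong continuity, the implication fails. The only other delicate point is the density of $R(\mathcal{A})$ under (iii), which is settled by the elementary resolvent bound above; everything else is bookkeeping with the identities $-\mathcal{A}N=\mathrm{id}_{D(N)}$, $N\mathcal{A}=-\mathrm{id}_{D(\mathcal{A})}$ and the closedness of the generator.
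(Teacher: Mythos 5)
Your argument is correct and complete. Note, however, that the paper itself offers no proof of this statement: it is imported verbatim from Berg--Forst (the cited Proposition 11.9), and the paper only verifies its hypothesis later via Theorem \ref{convhg} and \cite[Proposition 11.15]{BERG}. So there is no in-paper argument to compare against; what you have supplied is a self-contained semigroup-theoretic proof at exactly the right level of generality (only strong continuity and the contraction property of $(P_t)_{t\ge0}$ on $C_0(\R^d)$ are used, not sub-Markovianity). The skeleton is sound: the two inversion identities $-\mathcal{A}N=\mathrm{id}_{D(N)}$ and $N\mathcal{A}=-\mathrm{id}_{D(\mathcal{A})}$ (the latter under (iii)), injectivity of $\mathcal{A}$ from (iii), closedness of $N$ as the inverse of the closed injective operator $-\mathcal{A}$, density of $R(N)=D(\mathcal{A})$, and density of $D(N)=R(\mathcal{A})$ via the Abel-limit estimate $\|\lambda R_\lambda f\|_\infty\le\lambda T\|f\|_\infty+\sup_{t\ge T}\|P_tf\|_\infty$ (where one can even use that $t\mapsto\|P_tf\|_\infty$ is nonincreasing). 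The two converse implications are also handled correctly: (ii)$\Rightarrow$(iii) by computing $P_tNf\to0$ and a $3\varepsilon$-argument with $\|P_t\|\le1$, and (i)$\Rightarrow$(iii) by the Barbalat-type Tauberian step, where your uniform continuity bound $\|P_{t+h}f-P_tf\|_\infty\le\|P_hf-f\|_\infty$ is exactly what makes the extraction of $P_tf\to0$ from convergence of $\int_0^tP_sf\,\d s$ legitimate; this is indeed the genuinely nontrivial point, and your contradiction argument for it is valid in the Banach-space setting.
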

An important object will be the $0$-resolvent operator, that is the limit
\[ R_0f:=\lim\limits_{\lambda\to 0}R_\lambda f, \]
where $f\in D(R_0):=\{f\in C_0(\R^d) \colon R_0f$ exists in $C_0(R^d)\}.$ 
By \cite[Proposition 11.15]{BERG} $R_0=N$ if $\lim\limits_{t\to\infty}P_tf=0$ for all $f\in C_0(\R^d)$,
which is fulfilled by Theorem \ref{convhg}. Moreover, by Proposition \ref{berg119} $R_0$ is well-defined and a densely defined and closed operator in $C_0(\R^d)$.
\begin{lemma}\label{resolnull}
 Let $f\in D(R_0)$ and $\lambda>0$. Then
 \[ R_{\lambda}f-R_0f = -\lambda R_{\lambda}R_0f. \]
\end{lemma}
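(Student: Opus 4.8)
The identity is the $\lambda\to 0$ limit of the resolvent identity in Lemma \ref{resolventident}(1), so the plan is to start from there and pass to the limit carefully, using that $f\in D(R_0)$. Concretely, fix $\mu>0$ and apply Lemma \ref{resolventident}(1) in the form $R_\lambda f - R_\mu f = (\mu-\lambda)R_\lambda R_\mu f$; equivalently, by the commutativity in Lemma \ref{resolventident}(2), $R_\lambda f - R_\mu f = (\mu-\lambda)R_\mu R_\lambda f$. The goal is to let $\mu\to 0$ on both sides. For the left-hand side this is immediate: $R_\mu f \to R_0 f$ in $C_0(\R^d)$ by the definition of $D(R_0)$ and $f\in D(R_0)$. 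For the right-hand side we want $(\mu-\lambda)R_\mu R_\lambda f \to -\lambda R_0 R_\lambda f$, and then we need to identify $R_0 R_\lambda f$ with $R_\lambda R_0 f$.

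First I would justify the convergence on the right. Write $(\mu-\lambda)R_\mu R_\lambda f = \mu R_\mu(R_\lambda f) - \lambda R_\mu(R_\lambda f)$. For the second term, since $R_\lambda f\in C_0(\R^d)$ (the resolvent maps $C_0$ into $C_0$, as $(P_t)$ is a Feller semigroup) and since $\lim_{t\to\infty}P_t g = 0$ for all $g\in C_0(\R^d)$ by Theorem \ref{convhg}, Proposition \ref{berg119} together with \cite[Proposition 11.15]{BERG} gives that $R_0 = N$ is densely defined and closed, and in particular $R_\mu(R_\lambda f)\to R_0(R_\lambda f)$ as $\mu\to 0$ provided $R_\lambda f\in D(R_0)$. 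So the key intermediate fact to pin down is that $R_\lambda f\in D(R_0)$ whenever $f\in D(R_0)$, which follows because $R_0$ and $R_\lambda$ commute as operators (their kernels are Laplace transforms of the same transition densities and hence commute by Fubini) and $R_\lambda$ is bounded on $C_0(\R^d)$ with $\|R_\lambda\|\le 1/\lambda$; applying $R_\lambda$ to $R_\mu f\to R_0 f$ yields $R_\lambda R_\mu f\to R_\lambda R_0 f$ in $C_0(\R^d)$, and by the commutativity this equals $\lim_{\mu\to 0}R_\mu R_\lambda f$, so $R_\lambda f\in D(R_0)$ with $R_0 R_\lambda f = R_\lambda R_0 f$. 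For the first term, $\mu R_\mu(R_\lambda f)\to 0$: indeed $\mu R_\mu g$ is a sub-probabilistic average of $g$ along exponential times (cf. the remark after \eqref{resolventop}, $\mathds{E}[g(U_\tau)] = \lambda R_\lambda g$), so $\|\mu R_\mu g\|_\infty \le \|g\|_\infty$ uniformly, and one checks $\mu R_\mu g\to 0$ pointwise using $\mu R_\mu g(x) = \int_0^\infty e^{-s}P_{s/\mu}g(x)\,\d s$ together with $P_t g\to 0$ from Theorem \ref{convhg} and dominated convergence.

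Putting the pieces together: letting $\mu\to 0$ in $R_\lambda f - R_\mu f = (\mu-\lambda)R_\mu R_\lambda f$ gives $R_\lambda f - R_0 f = -\lambda R_0 R_\lambda f = -\lambda R_\lambda R_0 f$, which is the claimed identity. The main obstacle is the bookkeeping around $R_0$: one must be careful that all the limits are taken in $C_0(\R^d)$, that $R_\lambda$ preserves $D(R_0)$, and that the commutation $R_0 R_\lambda = R_\lambda R_0$ holds on $D(R_0)$; this is where the closedness of $R_0$ from Proposition \ref{berg119} and the boundedness of $R_\lambda$ from Proposition \ref{resolventenabsch}(1) are used. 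Everything else is a routine passage to the limit. Alternatively, and perhaps more cleanly, one can avoid the $\mu$-limit entirely by noting $R_0 = N = -\mathcal{A}^{-1}$ and combining this with the standard resolvent formula $R_\lambda = (\lambda - \mathcal{A})^{-1}$ on $D(\mathcal{A})$, but the limiting argument above has the advantage of only invoking results already established in the excerpt.
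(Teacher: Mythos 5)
Your proof is correct and takes essentially the same route as the paper: start from the resolvent identity of Lemma \ref{resolventident} and let $\mu\to 0$, using $R_\mu f\to R_0 f$ for $f\in D(R_0)$. The paper keeps the identity in the form $R_\lambda f-R_\mu f=(\mu-\lambda)R_\lambda R_\mu f$, so the right-hand side converges to $-\lambda R_\lambda R_0 f$ immediately from the boundedness of $R_\lambda$; your detour through $R_\mu R_\lambda f$ (requiring $R_\lambda f\in D(R_0)$, the commutation $R_0R_\lambda=R_\lambda R_0$, and $\mu R_\mu R_\lambda f\to 0$) is sound but not needed.
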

\begin{proof}
 Let $f\in D(R_0)$. By Lemma \ref{resolventident} for $\lambda,\mu>0$ we have 
 \begin{equation} \label{limresol}
 R_{\lambda}f-R_{\mu}f = (\mu-\lambda)R_{\lambda}R_{\mu}f.
 \end{equation}
 Since $f\in D(R_0)$ the limit $\lim\limits_{\mu\to 0}R_{\mu} f$ exists in $C_0(\R^d)$. Thus the result follows by taking the limit $\mu\to0$ in \eqref{limresol}. 
\end{proof}
Next we want to study the long-time behavior of the process $(U_t)_{t\geq 0}$ in terms of the potential measure, c.f. \cite{BERT}.
\begin{definition}\label{transdef}
We say that a L\'{e}vy process is transient if the potential measures are Radon measures, that is, for every compact set $K\subset\R^d$
\[ V(x,K)<\infty, \quad x\in\R^d. \]
\end{definition}
For $z\in\C$, we write $\mathcal{R}(z)$ for the real part of $z$. One method to verify transience of a L\'{e}vy process is the following. Recall
\begin{theorem}[{\cite[Theorem 17]{BERT}}\label{equivtrans}]
 Let $(L_t)_{t\geq 0}$ be a L\'{e}vy process with characteristic exponent $\Psi$. If for some $r>0$
 \begin{equation}\label{transeq}
 \int_{B_r} \mathcal{R}\left(\frac{1}{\Psi(\xi)}\right)\, \d \xi<\infty, 
 \end{equation}
then $(L_t)_{t\geq 0}$ is transient.
\end{theorem}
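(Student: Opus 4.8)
The plan is to prove transience straight from Definition~\ref{transdef}, by controlling the $\lambda$-potential measures of $(L_t)_{t\ge0}$ on balls uniformly in $\lambda$ and then letting $\lambda\downarrow0$.

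\emph{Reductions.} Write $V^\lambda(x,\cdot)=\mathds{E}^x\big[\int_0^\infty e^{-\lambda t}\mathds{1}_{\{L_t\in\cdot\}}\,\d t\big]$ and let $V(x,\cdot)$ be its $\lambda=0$ analogue. Since $e^{-\lambda t}\uparrow1$ as $\lambda\downarrow0$, monotone convergence yields $V^\lambda(x,B)\uparrow V(x,B)$, so it is enough to bound $V^\lambda(x,B)$ uniformly in $\lambda\in(0,1]$. Spatial homogeneity of a L\'evy process gives $V^\lambda(x,B)=V^\lambda(0,B-x)$, and every compact set lies in a ball; so I would reduce the statement to: for every $R>0$ there is $C_R<\infty$ with $V^\lambda(0,B_R)\le C_R$ for all $\lambda\in(0,1]$.

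\emph{Passing to the Fourier side.} Regard $V^\lambda:=V^\lambda(0,\cdot)$ as a finite positive measure of mass $\le1/\lambda$. Using $\mathds{E}[e^{i\xi\cdot L_t}]=e^{-t\Psi(\xi)}$ and the standard fact $\mathcal{R}(\Psi(\xi))\ge0$, a direct computation gives $\widehat{V^\lambda}(\xi)=\int_0^\infty e^{-\lambda t}e^{-t\Psi(\xi)}\,\d t=(\lambda+\Psi(\xi))^{-1}$. Next I would fix $R>0$ and a test function $g\colon\R^d\to[0,\infty)$ with $g\ge\mathds{1}_{B_R}$ and with $\widehat g$ even, non-negative, continuous and compactly supported; such a $g$ exists, e.g.\ of the form $g=c\,\psi^2$ with $\psi$ a suitably dilated Fej\'er-type kernel (so that $\psi\ge0$, $\widehat g$ is a positive multiple of $\widehat\psi\ast\widehat\psi\ge0$ with $\supp\widehat\psi$ compact, and the constant $c$ is large enough that $g\ge1$ on $B_R$). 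Fourier inversion of $g$, Fubini (legitimate because $\widehat g\in L^1$ and $V^\lambda$ is finite) and taking real parts then give
\[ V^\lambda(0,B_R)\le\int_{\R^d}g\,\d V^\lambda=\frac{1}{(2\pi)^d}\int_{\R^d}\widehat g(\xi)\,\mathcal{R}\!\left(\frac{1}{\lambda+\Psi(\xi)}\right)\,\d\xi. \]

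\emph{The uniform bound — where the hypothesis is used.} It remains to bound the last integral uniformly in $\lambda\in(0,1]$. I would split $\supp\widehat g$ into its intersection with $B_r$ (the $r$ from the hypothesis) and its complement. Off $B_r$: from $\mathcal{R}(\Psi(\xi))\ge0$ one gets $|\lambda+\Psi(\xi)|\ge|\Psi(\xi)|$, hence $\mathcal{R}\big((\lambda+\Psi(\xi))^{-1}\big)\le|\Psi(\xi)|^{-1}$; since $\Psi$ is continuous and — after the standard reduction to the genuinely $d$-dimensional case, which is automatic for the symmetric, non-degenerate processes to which the theorem is applied — does not vanish away from the origin, $|\Psi|$ is bounded below on the compact set $\supp\widehat g\setminus B_r$, so this part contributes a $\lambda$-independent constant. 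On $B_r$ one must control $\int_{B_r}\widehat g(\xi)\,\mathcal{R}\big((\lambda+\Psi(\xi))^{-1}\big)\,\d\xi$. If $\Psi$ is symmetric it is real and $\ge0$, so $\mathcal{R}\big((\lambda+\Psi(\xi))^{-1}\big)=(\lambda+\Psi(\xi))^{-1}\uparrow\Psi(\xi)^{-1}=\mathcal{R}(\Psi(\xi)^{-1})$ as $\lambda\downarrow0$, and monotone convergence together with the hypothesis $\int_{B_r}\mathcal{R}(1/\Psi)<\infty$ closes the estimate at once. For a general $\Psi$ I would split $B_r$ according to whether $\mathcal{R}(\Psi(\xi))\ge\lambda$ or $<\lambda$: on the first set $\mathcal{R}\big((\lambda+\Psi(\xi))^{-1}\big)\le2\,\mathcal{R}(\Psi(\xi)^{-1})$, which is integrable by assumption; the second is a neighbourhood of the origin shrinking as $\lambda\downarrow0$, and a dyadic decomposition by the size of $|\Psi|$ together with the assumed local integrability of $\mathcal{R}(1/\Psi)$ keeps its contribution bounded. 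Letting $\lambda\downarrow0$ then gives $V(0,B_R)<\infty$ for every $R$, hence $V(x,K)<\infty$ for every $x$ and every compact $K$, i.e.\ $(L_t)_{t\ge0}$ is transient.

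\emph{Main obstacle.} The delicate step is the estimate on $B_r$ for non-symmetric $\Psi$: the integrand $\mathcal{R}\big((\lambda+\Psi)^{-1}\big)$ is \emph{not} pointwise dominated by $\mathcal{R}(\Psi^{-1})$ — near frequencies at which $\mathcal{R}\Psi$ is small but $\mathcal{I}\Psi$ is not, it can be of order $1/\lambda$ — so one cannot simply invoke dominated convergence and must argue at the level of integrals, exploiting that the set of such bad frequencies has small measure; this is exactly where the quantitative content of $\int_{B_r}\mathcal{R}(1/\Psi)<\infty$ is consumed. Since the paper only needs this theorem for $\Psi$ the characteristic exponent of $A(x_0)Z_t$, which is real, that difficulty is absent and the $B_r$ estimate collapses to the monotone-convergence line; the only remaining technical point there is the reduction guaranteeing that $\Psi$ does not vanish off the origin.
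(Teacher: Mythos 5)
The paper itself offers no proof of Theorem \ref{equivtrans}: it is quoted verbatim from \cite[Theorem 17]{BERT}, so there is nothing internal to compare your argument with; what can be assessed is whether your sketch proves the stated result. Your route --- Fourier-transforming the $\lambda$-potential measures to get $\widehat{V^\lambda}(\xi)=(\lambda+\Psi(\xi))^{-1}$, testing against $g\geq\mathds{1}_{B_R}$ with $\widehat g\geq0$ continuous and compactly supported, and letting $\lambda\downarrow0$ --- is the standard Chung--Fuchs-type argument, and it does go through essentially as you write it when $\Psi$ is real, which is the only case the paper needs (in Proposition \ref{transient} one has $\Psi(\xi)=\sum_j|A_{jj}(x_0)\xi_j|^{\alpha_j}$, symmetric and vanishing only at the origin).

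As a proof of the theorem as stated, for an arbitrary L\'evy process, there are two genuine gaps. The decisive one is precisely the step you flag and then wave through: on $\{\xi\in B_r:\mathcal{R}\Psi(\xi)<\lambda\}$ the integrand $\mathcal{R}\bigl((\lambda+\Psi)^{-1}\bigr)$ is of size $\lambda/(\lambda^2+(\mathcal{I}\Psi)^2)$, while the hypothesis only controls $\mathcal{R}(1/\Psi)=\mathcal{R}\Psi/|\Psi|^2$, which is small exactly where $\mathcal{R}\Psi$ is small; so the assumption gives essentially no information on that set, and ``a dyadic decomposition by the size of $|\Psi|$'' is an assertion, not an argument. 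Making this work requires genuine structural input on L\'evy exponents (how small $\mathcal{R}\Psi$ can be, and what $\mathcal{I}\Psi$ does, near its zero set); removing the $\lambda$ from the Chung--Fuchs criterion for non-symmetric exponents is known to be the hard part of the transience/recurrence theory (Ornstein, Port--Stone) and does not follow from the elementary estimates you list. The second, smaller gap is your treatment of $\supp\widehat g\setminus B_r$: you need $|\Psi|$ bounded below there, and the claim that after a ``standard reduction'' $\Psi$ does not vanish away from the origin is false in general --- a compound Poisson process with jumps in $\Z^d$ has $\Psi=0$ on $2\pi\Z^d$ although it is genuinely $d$-dimensional. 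This one is easily repaired: dilate the Fej\'er-type kernel so that $\supp\widehat g\subset B_r$, at the cost of only a constant in $g\geq c(R,r)\mathds{1}_{B_R}$ (or prove the bound for one small ball and pass to arbitrary compacts by a covering argument and the strong Markov property), and the region outside $B_r$ never enters. With that fix, and restricted to real $\Psi$ where the monotone-convergence step on $B_r$ is valid, your proof is complete and covers the paper's application; for the general statement it is not yet a proof.
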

Note that Definition \ref{transdef} and Theorem \ref{equivtrans} also apply for shifted L\'{e}vy processes, i.e. L\'{e}vy processes whose initial value is not zero. \\
We next show that $(U_t)_{t\geq 0}$ is transient by verifying \eqref{transeq}.
\begin{proposition}\label{transient}
 $(U_t)_{t\geq 0}$ is transient.
\end{proposition}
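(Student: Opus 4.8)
The plan is to verify the integrability condition \eqref{transeq} of Theorem~\ref{equivtrans}. By \eqref{characteristicfunction} the characteristic exponent of $(U_t)_{t\geq 0}$ is
\[ \Psi(\xi)=\sum_{j=1}^d|A_{jj}(x_0)\,\xi_j|^{\alpha_j}, \qquad \xi\in\R^d, \]
which is real-valued and, since $A(x_0)$ is non-degenerate and diagonal (so $A_{jj}(x_0)\neq0$ for every $j$), strictly positive for $\xi\neq0$. Hence $\mathcal{R}(1/\Psi(\xi))=1/\Psi(\xi)$ on $\R^d\setminus\{0\}$, and it suffices to produce an $r>0$ with $\int_{B_r}\Psi(\xi)^{-1}\,\d\xi<\infty$. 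Setting $c_1:=\min_{1\leq j\leq d}|A_{jj}(x_0)|^{\alpha_j}>0$ one has $\Psi(\xi)\geq c_1\,g(\xi)$ with $g(\xi):=\sum_{j=1}^d|\xi_j|^{\alpha_j}$, so it remains to bound $\int_{B_r} g(\xi)^{-1}\,\d\xi$.

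To this end I would decompose the region $\{g\leq 2^{-k_0}\}$ into the anisotropic dyadic shells $D_k:=\{\xi\in\R^d:2^{-k-1}\leq g(\xi)\leq 2^{-k}\}$, $k\geq k_0$, and exploit the scaling matrices $\varXi(t)=\diag(t^{-1/\alpha_1},\dots,t^{-1/\alpha_d})$. Since $g(\varXi(2^{-k})\xi)=2^k g(\xi)$, the linear map $\varXi(2^{-k})$ carries $D_k$ onto $D_0$, whence $D_k=\varXi(2^{k})D_0$ and
\[ |D_k|=|\det\varXi(2^{k})|\,|D_0|=2^{-k\beta}|D_0|, \]
with $\beta=\sum_{j=1}^d 1/\alpha_j$ as in \eqref{beta}; moreover $D_0$ is bounded (on $D_0$ one has $|\xi_j|^{\alpha_j}\leq 1$ for all $j$), so $|D_0|<\infty$. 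Using $g\geq 2^{-k-1}$ on $D_k$,
\[ \int_{\{g\leq 2^{-k_0}\}}\frac{\d\xi}{g(\xi)}=\sum_{k\geq k_0}\int_{D_k}\frac{\d\xi}{g(\xi)}\leq \sum_{k\geq k_0}2^{\,k+1}\,|D_k|=2|D_0|\sum_{k\geq k_0}2^{\,k(1-\beta)}, \]
which is finite because $\alpha_j<2$ and $d\geq 3$ force $\beta>d/2\geq 3/2>1$. Choosing $r$ so small that $B_r\subset\{g\leq 2^{-k_0}\}\cup\{0\}$ then gives $\int_{B_r}\Psi^{-1}\,\d\xi<\infty$, and Theorem~\ref{equivtrans}, applied to the shifted L\'evy process $(U_t)_{t\geq0}$, yields transience.

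The computation is essentially elementary; the only step needing a little care is the anisotropic volume scaling $|D_k|=2^{-k\beta}|D_0|$, together with the observation that the geometric series converges \emph{precisely} because $\beta>1$ — which is exactly where the standing hypothesis on the dimension (equivalently, $\sum_j 1/\alpha_j>1$) is used. Beyond bookkeeping with the anisotropic dilations there is no real obstacle.
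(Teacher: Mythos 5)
Your argument is correct and verifies the same criterion as the paper, namely the integrability condition \eqref{transeq} of Theorem~\ref{equivtrans} applied to the exponent $\Psi(\xi)=\sum_{j=1}^d|A_{jj}(x_0)\xi_j|^{\alpha_j}$ from \eqref{characteristicfunction}; the difference lies in how the integral $\int_{B_r}\Psi(\xi)^{-1}\,\d\xi$ is estimated. The paper takes a shorter, isotropic route: for $|\xi_j|<1$ it bounds $\sum_j|\xi_j|^{\alpha_j}\geq \sum_j|\xi_j|^{\alpha_{\max}}\geq c\,|\xi|^{\alpha_{\max}}$ and integrates in polar coordinates, so convergence comes from $d-\alpha_{\max}>0$. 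You instead keep the anisotropy, decomposing a sublevel set of $g(\xi)=\sum_j|\xi_j|^{\alpha_j}$ into the dyadic shells $D_k$ and using that the dilation $\varXi(2^{k})$ maps $D_0$ onto $D_k$ with Jacobian $2^{-k\beta}$, so the series converges precisely when $\beta=\sum_j 1/\alpha_j>1$, cf.\ \eqref{beta}; all the individual steps (the scaling identity $g(\varXi(2^{-k})\xi)=2^kg(\xi)$, boundedness of $D_0$, $\beta>d/2\geq 3/2$) check out, and the overlap of the closed shells is a Lebesgue-null set, so the bookkeeping is harmless. What your version buys is a sharper sufficient condition: transience is obtained whenever $\beta>1$, which in particular covers $d=2$ (and even $d=1$ with $\alpha_1<1$), whereas the paper's comparison with an isotropic $\alpha_{\max}$-stable exponent uses $d>\alpha_{\max}$ and is tailored to the standing assumption $d\geq 3$; what the paper's version buys is brevity, since no anisotropic volume scaling is needed. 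Under the hypotheses of the paper both arguments are complete, so there is no gap to report.
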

\begin{proof}
By Theorem \ref{equivtrans} and \eqref{characteristicfunction}, if
\[ \exists r>0: \ \int_{B_r} \frac{1}{\sum_{j=1}^d|A_{jj}(x_0)\xi_j|^{\alpha_j}}\, \d \xi<\infty,\]
then $(U_t)_{t\geq 0}$ is transient. Let $r<1$ such that for $\xi\in B_{r}$. Then $|\xi_j|<1$ for any $j\in\{1,\dots,d\}$.  
Let $c_1:=\min\{ |A_{jj}(x_0)|\colon j\in\{1,\dots,d\}\}$ and $\alpha_{\max}=\max\{\alpha_j\colon j\in\{1,\dots,d\}\}$. Then
\begin{align*}
 \sum_{j=1}^d|A_{jj}(x_0)\xi_j|^{\alpha_j} &\geq c_1 \sum_{j=1}^d |\xi_j|^{\alpha_{\max}}\geq c_1 \max\limits_{j\in\{1,\dots,d\}}\{|\xi_j|^{\alpha_{\max}}\} \\
 & = c_1 \left(\max\limits_{j\in\{1,\dots,d\}}\{|\xi_j|^{2}\}\right)^{\alpha_{\max}/2} \geq \frac{c_1}{d} \left(\sum_{j=1}^d|\xi_j|^{2}\right)^{\alpha_{\max}/2} = c_2|\xi|^{\alpha_{\max}}
\end{align*}
Hence
\begin{align*}
 \int_{B_r} \frac{1}{\psi(\xi)} \, \d \xi &= \int_{B_r} \frac{1}{\sum_{j=1}^d |A_{jj}(x_0)\xi_j|^{\alpha_j}} \, \d \xi \leq c_3\int_{B_\epsilon} \frac{1}{|\xi|^{\alpha_{\max}}} \, \d \xi \\
 & = c_4\int_0^r s^{d-1}s^{-\alpha} \, \d s = c_4\int_0^r s^{d-1-\alpha} \, \d s <\infty,
\end{align*}
since $d\geq 3$ and $\alpha_{\max}\in(0,2)$ and therefore $d-\alpha_{\max}>0$.
\end{proof}
Because of the transience of the L\'evy process $U_t$ we have $R_0f(x)\to 0$ as $|x|\to\infty$ for $f\in C_c(\R^d)$, see \cite[Exercise 39.14]{SAT}.
Furthermore is easy to see that $R_0f$ for $f\in C_c(\R^d)$ is continuous by dominated convergence theorem. Hence for $f\in C_c(\R^d)$ we know
$R_0f\in C_0(\R^d)$.
We have
\begin{equation}\label{R0}
R_0f(x)=\mathds{E}^x \int_0^{\infty} f(U_s) \, \d s \quad f\in C_c(\R^d). 
\end{equation}
Different to $R_{\lambda}$ the operator $R_0$ is not well-defined on $C_b(\R^d)$. For instance let $f$ be a non-zero constant function. 
Then by the representation \eqref{R0} of $R_0$, one can easily see that $R_0f$ is infinite. \\
In the next step, we use Lemma \ref{second-order} to write the operator $\mathcal{L}_0$ on $f\in C_b^2(\R^d)$ with respect to a density.
\begin{lemma}
 Let $f\in C_b^2(\R^d)$, then
  \[ \mathcal{L}_0f(x) = \frac{1}{2}\sum_{j=1}^d \int_{\R\setminus\{0\}} (f(x+e_jh)-2f(x)+f(x-e_jh))\frac{c_{{\alpha_j}}}{|h|^{1+\alpha_j}}|A_{jj}(x_0)|^{\alpha_j} \d h.\]
\end{lemma}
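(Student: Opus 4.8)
The plan is to reduce the claimed identity to Lemma~\ref{second-order}, applied now to $\mathcal{L}_0$, followed by a one-dimensional change of variables carried out separately in each of the $d$ summands.

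First I would note that the operator $\mathcal{L}_0$ defined in \eqref{generatorL_0} has exactly the same structural form as $\mathcal{L}$ in \eqref{gen}, the only difference being that the $j$-th column $a_j(x)$ of $A(x)$ is replaced by the fixed vector $e_j A_{jj}(x_0)$. Hence the symmetrization argument behind Lemma~\ref{second-order} applies verbatim: since $h\mapsto c_{\alpha_j}/|h|^{1+\alpha_j}$ is even on $\R\setminus\{0\}$ whereas the compensator $h\mathds{1}_{\{|h|\le 1\}}\partial_j f(x)A_{jj}(x_0)$ is odd in $h$, pairing the contributions at $h$ and $-h$ makes the compensator cancel and turns the first difference into a symmetric second difference, giving
\[
 \mathcal{L}_0 f(x) = \frac12 \sum_{j=1}^d \int_{\R\setminus\{0\}} \bigl(f(x+e_jA_{jj}(x_0)h)-2f(x)+f(x-e_jA_{jj}(x_0)h)\bigr)\frac{c_{\alpha_j}}{|h|^{1+\alpha_j}}\,\d h .
\]
For $f\in C_b^2(\R^d)$ each inner integral is absolutely convergent, since a second-order Taylor expansion bounds the difference quotient by $c|h|^2$ near the origin (integrable against $|h|^{-1-\alpha_j}$ because $\alpha_j<2$) and boundedness of $f$ handles the tail (as $\alpha_j>0$).

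Next I would substitute $w=A_{jj}(x_0)h$ in the $j$-th integral. Because $A(x_0)$ is non-degenerate, $A_{jj}(x_0)\neq 0$ and this is a linear bijection of $\R$, so the change-of-variables formula yields a Jacobian factor $|A_{jj}(x_0)|^{-1}$, the absolute value absorbing the possible sign of $A_{jj}(x_0)$ (which is in any case harmless, the integrand being even in $h$). Using $|h|^{1+\alpha_j}=|A_{jj}(x_0)|^{-(1+\alpha_j)}|w|^{1+\alpha_j}$, the two powers of $|A_{jj}(x_0)|$ combine to $|A_{jj}(x_0)|^{\alpha_j}$, so the $j$-th integral becomes $\int_{\R\setminus\{0\}}\bigl(f(x+e_jw)-2f(x)+f(x-e_jw)\bigr)\frac{c_{\alpha_j}}{|w|^{1+\alpha_j}}|A_{jj}(x_0)|^{\alpha_j}\,\d w$; summing over $j$ and keeping the factor $\tfrac12$ gives the assertion.

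I do not expect a genuine obstacle here: the whole argument is bookkeeping once Lemma~\ref{second-order} is in place, and the only line needing care is the substitution when $A_{jj}(x_0)<0$, dealt with by writing the Jacobian as $|A_{jj}(x_0)|$ (or by first using the evenness of the integrand to assume $A_{jj}(x_0)>0$). As an alternative route one could compare Fourier transforms: computing the symbol of the right-hand operator exactly as in the proof of Lemma~\ref{multiplier1} gives $-\sum_j|\xi_jA_{jj}(x_0)|^{\alpha_j}$, which matches $\widehat{\mathcal{L}_0 f}$, so the two sides agree on $C_c^\infty(\R^d)$ and then on $C_b^2(\R^d)$ by approximation; but the direct change of variables is shorter and avoids any density argument.
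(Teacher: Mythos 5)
Your proposal is correct and follows essentially the same route as the paper: apply the symmetrization of Lemma~\ref{second-order} to $\mathcal{L}_0$ and then perform the substitution $w=A_{jj}(x_0)h$ in each summand, with the Jacobian and the power $|h|^{-1-\alpha_j}$ combining to the factor $|A_{jj}(x_0)|^{\alpha_j}$. The extra remarks on integrability and on the sign of $A_{jj}(x_0)$ are fine but not needed beyond what the paper does.
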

\begin{proof}
 By Lemma \ref{second-order}
 \[ \mathcal{L}_0f(x) = \frac{1}{2}\sum_{j=1}^d \int_{\R\setminus\{0\}} (f(x+e_jA_{jj}(x_0)h)-2f(x)+f(x-e_jA_{jj}(x_0)h))\frac{c_{{\alpha_j}}}{|h|^{1+\alpha_j}} \, \d h. \]
 Using the substitution $t=A_{jj}(x_0)h$ for each summand, we get
 \begin{align*}
 &\mathcal{L}_0f(x) = \frac{1}{2}\sum_{j=1}^d \int_{\R\setminus\{0\}} (f(x+e_jA_{jj}(x_0)h)-2f(x)+f(x-e_jA_{jj}(x_0)h))\frac{c_{{\alpha_j}}}{|h|^{1+\alpha_j}} \, \d h \\
 &{\ } = \frac{1}{2}\sum_{j=1}^d \int_{\R\setminus\{0\}} (f(x+e_jt)-2f(x)+f(x-e_jt))\frac{c_{{\alpha_j}}}{|A_{jj}(x_0)^{-1}t|^{1+\alpha_j}}|A_{jj}(x_0)|^{-1} \, \d t \\
 &{\ } = \frac{1}{2}\sum_{j=1}^d \int_{\R\setminus\{0\}} (f(x+e_jt)-2f(x)+f(x-e_jt))\frac{c_{{\alpha_j}}}{|t|^{1+\alpha_j}}|A_{jj}(x_0)|^{\alpha_j} \, \d t,
 \end{align*}
which proves the assertion.
\end{proof}
Next we give a Fourier multiplier theorem, which goes back to \cite{BANFOUR}.\\
Given $p\in(1,\infty)$, let 
\[p^{\ast}:=\max\left\{p,\frac{p}{p-1}\right\} \quad \iff \quad p^{\ast}-1=\max\left\{(p-1),(p-1)^{-1}\right\}.\]
Let $\Pi\geq0$ be a symmetric L\'evy measure on $\R^d$ and $\phi$ a complex-valued, Borel-measurable and symmetric function with
$|\phi(z)|\leq 1$ for all $z\in\R^d$.
\begin{theorem}[{\cite[Theorem 1]{BANFOUR}}]\label{banuelosbogdan}
The Fourier multiplier with the symbol
\begin{equation}\label{multiplier}
M(\xi)=\frac{\int_{\R^d }(\cos(\xi\cdot z) -1) \phi(z)\, \Pi(\d z)}{\int_{\R^d} (\cos(\xi\cdot z) -1)\, \Pi(\d z)}
\end{equation}
is bounded in $L^p(\R^d)$ for $1<p<\infty$, with the norm at most $p^{\ast}-1$.
That is, if we define the operator $\mathcal{M}$ on $L^2(\R^d)$ by
\[ \widehat{\mathcal{M}f}(\xi) = M(\xi)\widehat{f}(\xi), \]
then $\mathcal{M}$ has a unique linear extension to $L^p(\R^d)$, $1<p<\infty$, and 
\[ \|\mathcal{M}f\|_p \leq (p^{\ast}-1)\|f\|_p. \]
\end{theorem}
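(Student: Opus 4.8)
This statement is taken from \cite[Theorem 1]{BANFOUR}, so I only outline the probabilistic argument behind it. The plan is to realise $M(\xi)$ as the symbol attached to the pure-jump L\'evy process driven by $\Pi$ and to deduce the $L^p$-bound, with the sharp constant $p^{\ast}-1$, from Burkholder's martingale inequality. Concretely, let $(X_t)_{t\geq 0}$ be the L\'evy process on $\R^d$ with characteristic exponent $\psi(\xi)=\int_{\R^d}(1-\cos(\xi\cdot z))\,\Pi(\d z)$, write $P_t$ for its semigroup and $\mathds{P}^x,\mathds{E}^x$ for law and expectation when $X_0=x$, and let $N(\d s,\d z)$ be the Poisson random measure of the jumps of $X$ with compensated version $\widetilde{N}=N-\d s\,\Pi(\d z)$. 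Fix $T>0$ and $f\in C_c^{\infty}(\R^d)$, put $U(y,s)=P_{T-s}f(y)$ for $s\in[0,T]$, and note that $\widetilde{M}^{f}_s:=U(X_s,s)=\mathds{E}^x[f(X_T)\mid\mathcal{F}_s]$ is an $L^2(\mathds{P}^x)$-martingale with $\widetilde{M}^{f}_0=P_Tf(x)$ and $\widetilde{M}^{f}_T=f(X_T)$; since $X$ is purely discontinuous, It\^o's formula gives
\[ \widetilde{M}^{f}_t = P_Tf(x) + \int_0^t\!\!\int_{\R^d}\bigl(U(X_{s-}+z,s)-U(X_{s-},s)\bigr)\,\widetilde{N}(\d s,\d z),\qquad 0\leq t\leq T. \]

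Next I would introduce the $\phi$-transformed martingale
\[ M^{f,\phi}_t := \int_0^t\!\!\int_{\R^d}\phi(z)\bigl(U(X_{s-}+z,s)-U(X_{s-},s)\bigr)\,\widetilde{N}(\d s,\d z),\qquad 0\leq t\leq T, \]
which starts at $0$ and has the same jump epochs as $\widetilde{M}^{f}$ with jump sizes multiplied by $\phi$. Because $|\phi|\leq 1$, the purely discontinuous martingale $M^{f,\phi}$ is differentially subordinate to $\widetilde{M}^{f}$: indeed $|M^{f,\phi}_0|\leq|\widetilde{M}^{f}_0|$, and $[\widetilde{M}^{f}]_t-[M^{f,\phi}]_t=\int_0^t\!\int_{\R^d}(1-|\phi(z)|^2)\,|U(X_{s-}+z,s)-U(X_{s-},s)|^2\,N(\d s,\d z)$ is nondecreasing. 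Hence the sharp Burkholder--Wang inequality for differentially subordinate martingales yields, for each $x\in\R^d$ and each $p\in(1,\infty)$,
\[ \mathds{E}^x\bigl[|M^{f,\phi}_T|^p\bigr]\leq (p^{\ast}-1)^p\,\mathds{E}^x\bigl[|\widetilde{M}^{f}_T|^p\bigr]=(p^{\ast}-1)^p\,\mathds{E}^x\bigl[|f(X_T)|^p\bigr]. \]
Integrating over $x\in\R^d$ against Lebesgue measure and using the translation invariance of $X$ (so that $\int_{\R^d}\mathds{E}^x[G(X_s)]\,\d x=\int_{\R^d}G(y)\,\d y$ for $G\geq 0$) turns the right-hand side into $(p^{\ast}-1)^p\|f\|_p^p$, giving
\[ \int_{\R^d}\mathds{E}^x\bigl[|M^{f,\phi}_T|^p\bigr]\,\d x\leq (p^{\ast}-1)^p\,\|f\|_p^p . \]

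Then I would identify this averaged transform with $\mathcal{M}$. For $g\in C_c^{\infty}(\R^d)$ set $U_g(y,s)=P_{T-s}g(y)$ and let $\widetilde{M}^{g}$ be the associated martingale, with $\widetilde{M}^{g}_T=g(X_T)$ and $\widetilde{M}^{g}_0=P_Tg(x)$ deterministic under $\mathds{P}^x$; hence $\mathds{E}^x[M^{f,\phi}_T\,\overline{\widetilde{M}^{g}_T}]=\mathds{E}^x\!\bigl[M^{f,\phi}_T\,\overline{(\widetilde{M}^{g}_T-\widetilde{M}^{g}_0)}\bigr]$. Applying the It\^o isometry for compensated Poisson integrals, Fubini, the translation-invariance identity and Plancherel in the space variable (with $\widehat{U(\cdot,s)}(\xi)=e^{-(T-s)\psi(\xi)}\widehat{f}(\xi)$ and $\psi$ real since $\Pi$ is symmetric), a direct computation using $|e^{i\xi\cdot z}-1|^2=2(1-\cos(\xi\cdot z))$ and $\int_0^T e^{-2(T-s)\psi(\xi)}\,\d s=(1-e^{-2T\psi(\xi)})/(2\psi(\xi))$ gives
\[ \int_{\R^d}\mathds{E}^x\bigl[M^{f,\phi}_T\,\overline{\widetilde{M}^{g}_T}\bigr]\,\d x = \frac{1}{(2\pi)^d}\int_{\R^d}M(\xi)\,\bigl(1-e^{-2T\psi(\xi)}\bigr)\,\widehat{f}(\xi)\,\overline{\widehat{g}(\xi)}\,\d\xi, \]
the factor $M(\xi)$ emerging precisely as the ratio of the two $\Pi$-integrals in \eqref{multiplier}. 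Letting $T\to\infty$, dominated convergence shows the right-hand side tends to $(2\pi)^{-d}\int_{\R^d}M(\xi)\widehat{f}(\xi)\overline{\widehat{g}(\xi)}\,\d\xi=\langle\mathcal{M}f,g\rangle_{L^2}$. On the other hand, H\"older's inequality on $\R^d\times\Omega$ with exponents $p$ and $q=p/(p-1)$, together with the displayed $L^p$-bound and $\int_{\R^d}\mathds{E}^x[|g(X_T)|^q]\,\d x=\|g\|_q^q$, bounds the left-hand side by $(p^{\ast}-1)\|f\|_p\|g\|_q$. Combining the two and letting $T\to\infty$ gives $|\langle\mathcal{M}f,g\rangle_{L^2}|\leq (p^{\ast}-1)\|f\|_p\|g\|_q$ for all $g\in C_c^{\infty}(\R^d)$; by duality and density of $C_c^{\infty}(\R^d)$ in $L^q(\R^d)$ this is $\|\mathcal{M}f\|_p\leq (p^{\ast}-1)\|f\|_p$ for $f\in C_c^{\infty}(\R^d)$, hence for all $f\in L^p(\R^d)$ after checking that this extension agrees with $\mathcal{M}$ defined through the Fourier transform.

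The main obstacle is the sharp constant: one must invoke the continuous-time Burkholder--Wang inequality for (complex-valued) differentially subordinate, purely discontinuous martingales with the exact bound $p^{\ast}-1$, which is the substantive analytic input. Everything else is bookkeeping --- justifying the interchange of the $\d x$-integral with the expectation and the passage $T\to\infty$ (the ``uniform'' improper initial law is harmless because, for $f,g\in C_c^{\infty}$, all integrands appearing are genuinely integrable in $x$), and carrying the Plancherel computation far enough to recognise the ratio of $\Pi$-integrals as $M(\xi)$.
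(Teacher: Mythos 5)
This theorem is quoted in the paper directly from \cite[Theorem 1]{BANFOUR} without proof, and your outline is essentially a faithful reconstruction of the argument given there: realise the multiplier through space-time harmonic martingales of the L\'evy process with exponent $\psi(\xi)=\int(1-\cos(\xi\cdot z))\,\Pi(\d z)$, transform the jumps by $\phi$, invoke the sharp Burkholder--Wang inequality for differentially subordinate (purely discontinuous, complex-valued) martingales, average over the starting point, and identify the bilinear form via It\^o isometry and Plancherel before letting $T\to\infty$. The sketch is correct in substance (the only point to note is that the passage $1-e^{-2T\psi(\xi)}\to 1$ requires $\psi>0$ a.e., which holds in the paper's application), so no further comparison is needed.
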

For $j\in\{1,\dots,d\}$, let 
\[ \mathcal{M}_jf(x)=\int_{\R\setminus\{0\}} (f(x+e_jh)-2f(x)+f(x-e_jh))\frac{c_{{\alpha_j}}}{|h|^{1+\alpha_j}}\, \d h. \]
Our aim is to show that this operator fits into the set-up of Theorem \ref{banuelosbogdan}.
Let $f\in L^2(\R^d)$. Then 
\begin{align*}
\widehat{\mathcal{M}_jf}(\xi) & =\int_{\R^d}e^{ix\cdot \xi}\int_{\R\setminus\{0\}} (f(x+e_jh)-2f(x)+f(x-e_jh))\frac{c_{{\alpha_j}}}{|h|^{1+\alpha_j}}\, \d h\, \d x \\
& =\int_{\R\setminus\{0\}} \int_{\R^d} e^{ix\cdot \xi}(f(x+e_jh)-2f(x)+f(x-e_jh))\frac{c_{{\alpha_j}}}{|h|^{1+\alpha_j}}\, \d x\, \d h \\
& =\int_{\R\setminus\{0\}}\widehat{f}(\xi)(e^{ihe_j\cdot\xi}-2+e^{-ihe_j\cdot\xi})\frac{c_{{\alpha_j}}}{|h|^{1+\alpha_j}}\, \d h \\
& = 2\int_{\R\setminus\{0\}}\widehat{f}(\xi)(\cos(h\xi_j)-1)\frac{c_{{\alpha_j}}}{|h|^{1+\alpha_j}}\, \d h.
\end{align*} 
For $f\in C_c^2(\R^d)$, $R_0f$ is well-defined and therefore, by the previous calculation
\[ \widehat{\mathcal{M}_jR_0f}(\xi) =  -2\frac{\int_{\R\setminus\{0\}}(\cos(h\xi_j)-1)\frac{c_{{\alpha_j}}}{|h|^{1+\alpha_j}}\, \d h}{\sum_{j=1}^d\int_{\R\setminus\{0\}}(\cos(h\xi_j)-1)\frac{c_{{\alpha_j}}}{|h|^{1+\alpha_j}}|A_{jj}(x_0)|^{\alpha_j}\, \d h}\widehat{f}(\xi). \]
If we define for $z=(z_1,\dots,z_d)\in\R^d$
\begin{equation}
\begin{aligned}
\Pi(\d z)&   = \sum_{j=1}^d |A_{jj}(x_0)|^{\alpha_j} \frac{c_{\alpha_j}}{|z_j|^{1+\alpha_j}} \, \d z_j \prod_{i\neq j} \delta_{\{0\}}(\d z_i),\\
\phi(z)& = \mathds{1}_{\{z=e_ju\colon u\in\R\}}(z)|A_{jj}(x_0)|^{-\alpha_j}, 
\end{aligned}
\end{equation}
then we can write
\[ \widehat{\mathcal{M}_jR_0f}(\xi) = -2\frac{\int_{\R^d}(\cos(z\cdot\xi)-1)\phi(z)\, \Pi(\d z)}{\int_{\R^d}(\cos(z\cdot\xi)-1)\, \Pi(\d z)}\widehat{f}(\xi). \]
Therefore by Theorem \ref{banuelosbogdan} for $f\in C_c^2(\R^d)$
\begin{equation}\label{Lpj}
 \|\widehat{\mathcal{M}_jR_0f}\|_p \leq 2a(p^{\ast}-1)\|f\|_p, 
\end{equation} 
where 
\begin{equation}\label{A}
 a=\max\{|A_{11}(x_0)|^{-\alpha_1},\dots,|A_{dd}(x_0)|^{-\alpha_d}\}.
\end{equation}
Let us define the perturbation operator on $C_b^2(\R^d)$ by
\[ \mathcal{B}f(x) = \mathcal{L}f(x) - \mathcal{L}_0f(x). \]
Set
\begin{equation}\label{coeffeta}
 \eta:=\sup\limits_{j\in\{1,\dots,d\}} \||A_{jj}(\cdot)|^{\alpha_j}-|A_{jj}(x_0)|^{\alpha_j}\|_{L^{\infty}(\R^d)}.
\end{equation}
We assume
\begin{equation}\label{etaassumption}
  \eta\leq \eta_0:= \frac{1}{4da(p^{\ast}-1)}, \tag{Loc}
\end{equation}
where $a$ is defined as in \eqref{A}.
\begin{proposition}\label{BR0}
 Let $f\in C_0(\R^d)$ be such that $R_0f\in C_b^2(\R^d)$. Let $p\in(1,\infty)$ and assume $\eta$ satisfies \eqref{etaassumption}. Then
 \[\|\mathcal{B}R_0f\|_p\leq \frac14 \|f\|_p. \]
\end{proposition}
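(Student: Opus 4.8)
The plan is to reduce $\mathcal{B}$ to a finite sum of the one-dimensional operators $\mathcal{M}_j$ with coefficients controlled by $\eta$, and then invoke the $L^p$-bound \eqref{Lpj} together with the localization assumption \eqref{etaassumption}.

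First I would rewrite $\mathcal{B}$ in a convenient form. Since $A$ is diagonal (Assumption \ref{diagonal}), the $j$-th column of $A(x)$ is $e_jA_{jj}(x)$, so Lemma \ref{second-order} gives
\[ \mathcal{L}f(x)=\frac12\sum_{j=1}^d\int_{\R\setminus\{0\}}\bigl(f(x+e_jA_{jj}(x)h)-2f(x)+f(x-e_jA_{jj}(x)h)\bigr)\frac{c_{\alpha_j}}{|h|^{1+\alpha_j}}\,\d h. \]
Performing, for each fixed $x$, the substitution $t=A_{jj}(x)h$ in the $j$-th summand — exactly as in the lemma that rewrites $\mathcal{L}_0$ via weighted second-order differences — this equals $\mathcal{L}f(x)=\tfrac12\sum_{j=1}^d|A_{jj}(x)|^{\alpha_j}\mathcal{M}_jf(x)$, and the same computation at the frozen point $x_0$ gives $\mathcal{L}_0f(x)=\tfrac12\sum_{j=1}^d|A_{jj}(x_0)|^{\alpha_j}\mathcal{M}_jf(x)$. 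Subtracting,
\[ \mathcal{B}g(x)=\frac12\sum_{j=1}^d\bigl(|A_{jj}(x)|^{\alpha_j}-|A_{jj}(x_0)|^{\alpha_j}\bigr)\mathcal{M}_jg(x), \]
which is defined pointwise whenever $g\in C_b^2(\R^d)$, in particular for $g=R_0f$ under our hypothesis.

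Next I would estimate directly. Taking $L^p$-norms, using the triangle inequality, and pulling out the coefficients in $L^\infty$,
\[ \|\mathcal{B}R_0f\|_p\le\frac12\sum_{j=1}^d\bigl\||A_{jj}(\cdot)|^{\alpha_j}-|A_{jj}(x_0)|^{\alpha_j}\bigr\|_{L^\infty(\R^d)}\,\|\mathcal{M}_jR_0f\|_p\le\frac{\eta}{2}\sum_{j=1}^d\|\mathcal{M}_jR_0f\|_p, \]
where the last step is the definition \eqref{coeffeta} of $\eta$. By the multiplier bound \eqref{Lpj} we have $\|\mathcal{M}_jR_0f\|_p\le 2a(p^{\ast}-1)\|f\|_p$ with $a$ as in \eqref{A}, so $\|\mathcal{B}R_0f\|_p\le d\,a\,(p^{\ast}-1)\,\eta\,\|f\|_p$; the assumption \eqref{etaassumption}, $\eta\le\eta_0=\bigl(4da(p^{\ast}-1)\bigr)^{-1}$, then yields $\|\mathcal{B}R_0f\|_p\le\tfrac14\|f\|_p$, as claimed. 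Note that the genuinely hard work here is already contained in \eqref{Lpj}, i.e.\ in the Calder\'on--Zygmund/Ba\~nuelos--Bogdan analysis of Theorem \ref{banuelosbogdan}; what remains is the bookkeeping above.

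The one point that does need care, and which I expect to be the main (if modest) obstacle, is that \eqref{Lpj} was derived for $f\in C_c^2(\R^d)$, whereas the proposition allows the larger class $\{f\in C_0(\R^d):R_0f\in C_b^2(\R^d)\}$. If $f\notin L^p$ the inequality is trivial, so assume $f\in C_0(\R^d)\cap L^p(\R^d)$ and choose $f_n\in C_c^\infty(\R^d)$ with $f_n\to f$ both in $L^p$ and uniformly. The cleanest route is to observe that, by the Fourier computation preceding \eqref{Lpj}, the map $g\mapsto\mathcal{M}_jR_0g$ on $C_c^2(\R^d)$ is the restriction of the $L^p$-bounded Fourier multiplier whose symbol is $-2\bigl(\int_{\R^d}(\cos(z\cdot\xi)-1)\phi(z)\,\Pi(\d z)\bigr)\bigl(\int_{\R^d}(\cos(z\cdot\xi)-1)\,\Pi(\d z)\bigr)^{-1}$, hence extends continuously to $L^p(\R^d)$; one then identifies this extension, applied to $f$, with the pointwise operator $\mathcal{M}_jR_0f$ (using $R_0f\in C_b^2(\R^d)$), for instance by testing against Schwartz functions together with the identity $\mathcal{L}_0R_0f=-f$, or by dominated convergence in the second-difference integrals once one knows $R_0f_n\to R_0f$ with uniformly controlled second differences. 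With this identification \eqref{Lpj} holds for every $f$ in the stated class, and the estimate above finishes the proof.
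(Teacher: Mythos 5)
Your proof follows the same route as the paper's: decompose $\mathcal{B}R_0f$ into the second-difference operators $\mathcal{M}_jR_0f$ weighted by the coefficient differences, pull those differences out in $L^\infty$ (bounded by $\eta$ from \eqref{coeffeta}), and apply the multiplier bound \eqref{Lpj} together with \eqref{etaassumption}. Your extra care in extending \eqref{Lpj} from $C_c^2(\R^d)$ to the class $\{f\in C_0(\R^d)\colon R_0f\in C_b^2(\R^d)\}$ addresses a point the paper passes over silently, but it does not change the substance of the argument.
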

\begin{proof}
 Without loss of generality, we can assume $f\in L^p(\R^d)$. Otherwise the right-hand side of the assertion is infinite and the statement is trivially true.
 Using H\"older's inequality, we get 
 \begin{align*}
  \|\mathcal{B}R_0f\|_p & = \|(\mathcal{L}-\mathcal{L}_0)R_0f\|_p \\
  & = \Bigg\|\frac{1}{2}\sum_{j=1}^d \int_{\R\setminus\{0\}} (R_0f(\cdot+e_jh)-2R_0f(\cdot)+R_0f(\cdot-e_jh))\frac{c_{{\alpha_j}}}{|h|^{1+\alpha_j}}\, \d h\\
  & \qquad \times\left(|A_{jj}(\cdot)|^{\alpha_j} - |A_{jj}(x_0)|^{\alpha_j}\right) \Bigg\|_{p} \\
  & \leq \sum_{j=1}^d\Bigg\|\frac{1}{2}\int_{\R\setminus\{0\}} (R_0f(\cdot+e_jh)-2R_0f(\cdot)+R_0f(\cdot-e_jh))\frac{c_{{\alpha_j}}}{|h|^{1+\alpha_j}}\, \d h\\
  & \qquad \times\left(|A_{jj}(\cdot)|^{\alpha_j} - |A_{jj}(x_0)|^{\alpha_j}\right) \Bigg\|_{p} \\
  & \leq \frac12 \eta \sum_{j=1}^d \|\mathcal{M}_jR_0f\|_p.
 \end{align*}
Using \eqref{Lpj}, $\|\mathcal{M}_jR_0f\|_p\leq 2a(p^{\ast}-1)\|f\|_p$. Hence by the definition of $\eta$
\[  \|\mathcal{B}R_0f\|_p\leq \eta da(p^{\ast}-1)\|f\|_p\leq \frac{1}{4}\|f\|_p.\]
\end{proof}
\subsection{Boundedness of the resolvent}\label{bddsub}
The aim of this subsection is to prove that the resolvent operator for any weak solution to \eqref{SDEM} is bounded for any 
$f\in C_b^2(\R^d)$ by the $L^p$-norm of $f$.\\
More precisely, assume that $\mathds{P}$ is a solution to the martingale problem for $\mathcal{L}$ started at $x_0$, see Definition \ref{defmart} and $\mathds{E}$ the expectation with respect to $\mathds{P}$. Let 
\begin{equation}\label{slambda}
  S_\lambda f = \mathds{E}\left[\int_0^\infty e^{-\lambda t}f(X_t) \, \d t\right], \quad f\in C_b(\R^d).
\end{equation}
We want to show that under the assumption \eqref{etaassumption} there is a constant $c_1>0$ such that
\begin{equation}\label{bddresolvent}
|S_{\lambda} f| \leq c_1\|f\|_p.
\end{equation}
For each $n\in\N$ we first define the truncated process
\begin{equation}
 Y_t^n = \sum_{k=0}^\infty X_{k/2^n}\mathds{1}_{\{\frac{k}{2^n}\leq t < \frac{k+1}{2^n}\}\cap \{t\leq n\}}+X_n\mathds{1}_{\{t> n\}}
\end{equation}
and $U_t^n$ as the solution to the system of stochastic differential equations
\begin{equation}\label{SDEUn}
 dU_t^n = A(Y_{t-}^n) dZ_t, \ U_0^n = x_0, 
 \end{equation}
where $x_0\in\R^d$ is as in \eqref{SDEM}.
Since $Y_t^n$ is piecewise constant and constant after time $n$, for every $n\in\N$, there is a unique solution $U_t^n$ to \eqref{SDEUn}.
Let
\begin{equation}\label{vlambdan}
 V_\lambda^n f:=\mathds{E}\left[ \int_0^\infty e^{-\lambda t}f(U_t^n) \, \d t. \right], \quad f\in C_0(\R^d).
\end{equation}
First we show that the resolvent operator $V_{\lambda}^nf$ converges to $S_{\lambda}f$ for any $f\in C_b(\R^d)$ and any fixed $\lambda>0$.
This follows from the observation that $U_t^n(\omega)$ converges to $X_t(\omega)$ in probability for all $t\geq 0$. 
\begin{lemma}\label{resolconv}
Let $\lambda>0$. For all $f\in C_b(\R^d)$
\[\lim\limits_{n\to\infty} V_\lambda^n f = S_\lambda f.\]
\end{lemma}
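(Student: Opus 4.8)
The plan is to reduce the convergence of the resolvents to the pathwise-in-probability convergence $U_t^n\to X_t$ and then apply dominated convergence twice. First, since $|f|\le\|f\|_\infty$ and $\int_0^\infty e^{-\lambda t}\,\d t=1/\lambda<\infty$, Fubini's theorem lets us exchange expectation and time-integration (the integrands being jointly measurable because $X$ and $U^n$ are càdlàg, hence progressively measurable), so that
\[ V_\lambda^n f - S_\lambda f = \int_0^\infty e^{-\lambda t}\left(\mathds{E}[f(U_t^n)] - \mathds{E}[f(X_t)]\right)\,\d t. \]
It therefore suffices to show that $\mathds{E}[f(U_t^n)]\to\mathds{E}[f(X_t)]$ for every fixed $t>0$; the integrand above is then bounded by $2\|f\|_\infty e^{-\lambda t}\in L^1((0,\infty))$, and dominated convergence in $t$ finishes the argument.

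The heart of the proof is the claim that $U_t^n\to X_t$ in probability for each fixed $t$. Under Assumption \ref{diagonal} the system decouples, and subtracting the integral equation for $X$ from that for $U^n$ gives, for each coordinate $i\in\{1,\dots,d\}$,
\[ U_t^{n,i} - X_t^i = \int_0^t \left(A_{ii}(Y_{s-}^n) - A_{ii}(X_{s-})\right)\,\d Z_s^i. \]
Because $Y^n$ is the dyadic step-function approximation of $X$, for every $s$ that is not a dyadic rational the grid point immediately below $s$ increases to $s$, so by the existence of left limits of the paths of $X$ one has $Y_{s-}^n\to X_{s-}$ as $n\to\infty$; since $A_{ii}$ is continuous and bounded, the integrand $G_s^{n,i}:=A_{ii}(Y_{s-}^n)-A_{ii}(X_{s-})$ is a predictable process converging to $0$ for $\d s\times\d\mathds{P}$-almost every $(s,\omega)\in[0,t]\times\Omega$ and is uniformly bounded by $2\|A_{ii}\|_\infty$. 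By the dominated convergence theorem for stochastic integrals with respect to the semimartingale $Z^i$ — or, concretely, by splitting $Z^i$ into its genuinely $L^2$-bounded compensated small-jump martingale part, to which one applies the Itô isometry together with ordinary dominated convergence, and its large-jump part, which on $[0,t]$ is a finite sum over jump times that are a.s. not dyadic — the stochastic integral tends to $0$ in probability. Summing over the finitely many coordinates yields $U_t^n\to X_t$ in probability.

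To conclude, fix $t>0$. Since $f\in C_b(\R^d)$ is continuous, the continuous mapping theorem gives $f(U_t^n)\to f(X_t)$ in probability, and the uniform bound $|f(U_t^n)|\le\|f\|_\infty$ supplies uniform integrability, so $\mathds{E}[f(U_t^n)]\to\mathds{E}[f(X_t)]$. Combined with the reduction of the first paragraph, this proves the lemma.

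The only genuinely delicate step is the passage to the limit inside the stochastic integral in the second paragraph: one has to verify that $Y_{\cdot-}^n$ is an admissible (predictable) integrand and that $\d s\times\d\mathds{P}$-almost everywhere convergence of the integrands is the correct mode of convergence for a stochastic dominated convergence theorem against the $\alpha_i$-stable integrators, which for $\alpha_i\ge 1$ are neither of finite variation nor square integrable and hence require the small-jump/large-jump decomposition indicated above. Everything else is routine application of Fubini's theorem and classical dominated convergence.
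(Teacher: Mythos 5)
Your proposal is correct and follows essentially the same route as the paper: both reduce the claim to the convergence $U_t^n\to X_t$ in probability (via $Y_{s-}^n\to X_{s-}$ and a dominated-convergence argument for the stochastic integrals) and then pass to the limit in the resolvent using the bound $|f|\le\|f\|_\infty$ and $e^{-\lambda t}\in L^1$. The paper states this tersely ("by dominated convergence $U_t^n\to X_t$ in $L^p$, hence in probability"), whereas you supply the justification via the dominated convergence theorem for stochastic integrals with the small-jump/large-jump decomposition — a welcome elaboration of the same argument, not a different one.
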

\begin{proof}
Since $\lim\limits_{n\to\infty} Y_{t-}^n  = X_{t-}$
almost surely, by dominated convergence $U_t^n$ converges to $X_t$ in $L^p$ and therefore
$U_t^n$ converges to $X_t$ in probability with respect to the solution of martingale problem $\mathds{P}$.
Hence,
\[ \lim\limits_{n\to\infty} V_\lambda^n f = \lim\limits_{n\to\infty} \mathds{E}\left[\int_0^\infty e^{-\lambda t}f(U_t^n) \, \d t\right] = \mathds{E}\left[\int_0^\infty e^{-\lambda t}f(X_t) \, \d t\right]=S_\lambda f. \]
\end{proof}
The following Lemma gives an $L^p$-bound for $V_{\lambda}^n$, depending on $n$. 
Since we want to consider the limit of $V_{\lambda}^n$ for $n\to\infty$, the statement of this lemma is not sufficient for our purposes. 
Thus we have to improve the result to an uniform $L^p$-bound independent of $n$ afterwards. This is to be done in Theorem \ref{vlambda}.
\begin{lemma}\label{vlambdaabsch}
Let $p>\beta$, $n\in\N$ and $\lambda>0$. There is a constant $c_1>0$, depending on $n$, such that for all $f\in C_b(\R^d)$
\[ |V_{\lambda}^n f| \leq c_1\|f\|_p. \]
\end{lemma}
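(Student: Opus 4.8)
The plan is to exploit that, for fixed $n$, the truncated process $(U_t^n)_{t\geq 0}$ is, between consecutive dyadic times, nothing but a \emph{fixed--coefficient} affine L\'evy process whose coefficient matrix has already been frozen, so that the estimates of Proposition \ref{resolventenabsch} apply conditionally on each such interval. As in the proof of Proposition \ref{resolventenabsch} we may assume $f\in L^p(\R^d)$, since otherwise the right--hand side is infinite.

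First I would record the key structural observation. Write $\theta=2^{-n}$ and, for an integer $k\geq 0$ with $k\theta\leq n$, condition on $\mathcal{F}_{k\theta}$. On the interval $[k\theta,(k+1)\theta)$ the equation \eqref{SDEUn} reads $dU_t^n=A(Y_{t-}^n)\,\d Z_t=A(X_{k\theta})\,\d Z_t$, so that
\[ U_t^n = U_{k\theta}^n+A(X_{k\theta})\,(Z_t-Z_{k\theta}),\qquad t\in[k\theta,(k+1)\theta), \]
where $U_{k\theta}^n$ and $M_k:=A(X_{k\theta})$ are $\mathcal{F}_{k\theta}$--measurable while $(Z_t-Z_{k\theta})_{t\geq k\theta}$ is independent of $\mathcal{F}_{k\theta}$ with the law of $(Z_{t-k\theta})$. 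Hence, conditionally on $\mathcal{F}_{k\theta}$, $(U_t^n)_{t\in[k\theta,(k+1)\theta)}$ has the transition density of the frozen process with matrix $M_k$, and by H\"older's inequality together with the scaling of that density --- exactly as in the proof of Proposition \ref{resolventenabsch}(2), with $A(x_0)$ replaced by $M_k$ --- one obtains
\[ \left| \mathds{E}\left[ f(U_t^n)\mid\mathcal{F}_{k\theta}\right] \right|\leq (t-k\theta)^{-\beta/p}\,|\det M_k|^{-1/p}\,\|q_1(\cdot)\|_q\,\|f\|_p . \]
Since $A$ is bounded and non-degenerate, $|\det M_k|^{-1/p}$ is dominated by a deterministic constant depending only on $A$, and the hypothesis $p>\beta$ enters precisely through the integrability of $s\mapsto s^{-\beta/p}$ near $0$.

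Then I would split $V_\lambda^n f$ at the ``freeze time'' $n$. On $\{t<n\}$ the integral decomposes over the $n2^n$ dyadic intervals $[k\theta,(k+1)\theta)$, $k=0,\dots,n2^n-1$; taking expectations and using the bound above, each interval contributes at most $c(A)\,\|q_1(\cdot)\|_q\,\|f\|_p\int_0^{\theta}s^{-\beta/p}\,\d s$, so that summation over $k$ gives a bound of the form $n2^n\,\theta^{\,1-\beta/p}\,c(A)\,\|q_1(\cdot)\|_q\,(1-\beta/p)^{-1}\|f\|_p$. On $\{t\geq n\}$ we have $Y_t^n=X_n$ for every $t\geq n$, so after time $n$ the process is the single frozen process with matrix $A(X_n)$ started from $U_n^n$; therefore
\[ \mathds{E}\left[\int_n^\infty e^{-\lambda t}f(U_t^n)\,\d t\;\Big|\;\mathcal{F}_n\right]= e^{-\lambda n}\,\mathds{E}\left[\int_0^\infty e^{-\lambda s} f\big(U_n^n+A(X_n)Z_s\big)\,\d s\;\Big|\;\mathcal{F}_n\right], \]
and Proposition \ref{resolventenabsch}(3), applied with $A(x_0)$ replaced by $A(X_n)$, bounds the inner expectation by $c(A)\,\|q_1(\cdot)\|_q\left(\int_0^\infty e^{-\lambda t}t^{-\beta/p}\,\d t\right)\|f\|_p$, which is finite because $p>\beta$ and $\lambda>0$. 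Adding the two contributions yields $|V_\lambda^n f|\leq c_1\|f\|_p$ with $c_1=c_1(n,\lambda,p,A)$; note that the factor $n2^n\theta^{\,1-\beta/p}=n\,2^{\,n\beta/p}$ is not bounded in $n$, which is why the sharpening to an $n$--independent estimate in Theorem \ref{vlambda} will require the perturbation bound of Proposition \ref{BR0}.

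The only genuinely delicate point is the frozen--coefficient representation on each dyadic interval, i.e.\ the independence of the increments of $Z$ from $\mathcal{F}_{k\theta}$ and the $\mathcal{F}_{k\theta}$--measurability of $Y_{k\theta}^n$; these follow from the construction of $U^n$ as the (pathwise unique) solution of \eqref{SDEUn} driven by the same L\'evy process $Z$. Once this is in place the argument reduces to the one--interval estimates of Proposition \ref{resolventenabsch} and a finite summation, so I expect no further obstruction.
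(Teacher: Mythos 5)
Your proof is correct and is essentially the argument the paper has in mind: the paper omits the proof of this lemma and refers to \cite[Lemma 5.1]{BCS}, and that argument is exactly your scheme — condition on $\mathcal{F}_{k2^{-n}}$, note that on each dyadic interval $U^n$ is the frozen-coefficient affine L\'evy process with matrix $A(X_{k2^{-n}})$, apply H\"older together with the scaling computation of Proposition \ref{resolventenabsch} to the conditional density, sum over the finitely many intervals up to time $n$ (which is where the $n$-dependence of $c_1$ enters), and treat $t\geq n$ as a single frozen process via Proposition \ref{resolventenabsch}(3).

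One point needs a more careful justification: you dominate $|\det A(X_{k/2^n})|^{-1/p}$ (and $|\det A(X_n)|^{-1/p}$) by a deterministic constant ``since $A$ is bounded and non-degenerate''. Assumption \ref{assumption1} is only pointwise non-degeneracy, and together with continuity and boundedness it does not give $\inf_{x\in\R^d}|\det A(x)|>0$; without such a uniform bound the expectation of $|\det A(X_{k/2^n})|^{-1/p}$ need not be finite. The bound does hold in the setting in which the lemma is used: under the standing hypothesis \eqref{etaassumption} of this subsection one has $|A_{jj}(x)|^{\alpha_j}\geq |A_{jj}(x_0)|^{\alpha_j}-\eta_0>0$ uniformly in $x$, hence a uniform lower bound on $|\det A(x)|$. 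So you should invoke \eqref{etaassumption} (or an explicit uniform non-degeneracy hypothesis) at this step rather than Assumption \ref{assumption1}; with that adjustment the proof goes through as written.
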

The result follows as in \cite[Lemma 5.1]{BCS}. Hence we omit it here.\\
Next, we prove for every $\lambda>0$ a uniform bound in $n$ for $\sup\limits_{\|f\|_{p}\leq 1} V_{\lambda}^nf.$
For this purpose, we need to define auxiliary functions.\\
Let $n\in\N$. For $s\geq 0$ and $\omega\in\Omega$ we define
\[ \widetilde{A^n_j}(s,\omega)=\sum_{k=0}^\infty A_{jj}(X_{\frac{k}{2^n}-}(\omega))\mathds{1}_{\{\frac{k}{2^n}\leq s < \frac{k+1}{2^n}\}\cap \{s\leq n\}}+A_{jj}(X_{n-}(\omega))\mathds{1}_{\{s\geq n\}} \]
and for $f\in C_b^2(\R^d)$
\[ \widetilde{\mathcal{L}_n}f(x,s,\omega):=\sum_{j=1}^d \int_{\R\setminus\{0\}} [f(x+e_j\widetilde{A^n_j}(s,\omega)h)-f(x)-h\mathds{1}_{\{|h|\leq 1\}}\partial_jf(x)\widetilde{A^n_{jj}}(s,\omega)]\frac{c_{\alpha_j}}{|h|^{1+\alpha_k}} \, \d h.\]
Moreover, let
\[\widetilde{\mathcal{B}_n}f(x,s,\omega)=\widetilde{\mathcal{L}_n}f(x,s,\omega)-\mathcal{L}_0f(x).\] 
Note that for each $j\in\{1,\dots,d\}$ by continuity it holds
\[ \lim\limits_{n\to\infty}\widetilde{A_{jj}^n}(s,\omega)=A_{jj}(X_{s-}(\omega)) \]
and by dominated convergence we have for all $f\in C_b^2(\R^d)$
\[ \lim\limits_{n\to\infty}\widetilde{\mathcal{L}_n}f(x,s,\omega)= \mathcal{L}f(X_{s-}(\omega)). \]
\begin{proposition}\label{Lptilde}
Let $f\in C_0(\R^d)$ such that $R_0f\in C_b^2(\R^d)$. Let $p\in(1,\infty)$ and assume that $\eta$ defined in \eqref{coeffeta} satisfies \eqref{etaassumption}. Then
\[ \|\widetilde{\mathcal{B}_n}R_0f\| \leq \frac14\|f\|_p. \]
\end{proposition}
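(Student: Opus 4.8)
The plan is to follow the proof of Proposition~\ref{BR0} essentially verbatim, the one new observation being that for each fixed $(s,\omega)$ the random coefficient $\widetilde{A_j^n}(s,\omega)$ is, by its very definition, a value $A_{jj}(y)$ of the function $A_{jj}$ at some point $y\in\R^d$; I read the statement as the uniform bound $\sup_{s\geq0,\,\omega\in\Omega}\|\widetilde{\mathcal{B}_n}R_0f(\cdot,s,\omega)\|_p\leq\tfrac14\|f\|_p$.

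First I would rewrite $\widetilde{\mathcal{L}_n}$ with weighted second-order differences. Arguing by symmetry as in Lemma~\ref{second-order} and then performing the substitution $t=\widetilde{A_j^n}(s,\omega)h$ in each summand --- admissible since $A(x)$ is non-degenerate and diagonal, so $A_{jj}$ never vanishes, and it is exactly the computation used earlier to rewrite $\mathcal{L}_0$ --- one obtains
\[
 \widetilde{\mathcal{L}_n}f(x,s,\omega)=\frac12\sum_{j=1}^d \mathcal{M}_jf(x)\,|\widetilde{A_j^n}(s,\omega)|^{\alpha_j},
\]
where $\mathcal{M}_j$ is the operator introduced before \eqref{Lpj}. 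Since likewise $\mathcal{L}_0f(x)=\frac12\sum_{j=1}^d\mathcal{M}_jf(x)\,|A_{jj}(x_0)|^{\alpha_j}$, subtracting gives
\[
 \widetilde{\mathcal{B}_n}f(x,s,\omega)=\frac12\sum_{j=1}^d \mathcal{M}_jf(x)\bigl(|\widetilde{A_j^n}(s,\omega)|^{\alpha_j}-|A_{jj}(x_0)|^{\alpha_j}\bigr).
\]

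Now fix $s\geq0$ and $\omega\in\Omega$, and assume without loss of generality $f\in L^p(\R^d)$ (otherwise the claim is trivial). By definition $\widetilde{A_j^n}(s,\omega)$ equals $A_{jj}(X_{k/2^n-}(\omega))$ for some $k\in\N$ or $A_{jj}(X_{n-}(\omega))$; in either case it is of the form $A_{jj}(y)$ with $y\in\R^d$, whence
\[
 \bigl||\widetilde{A_j^n}(s,\omega)|^{\alpha_j}-|A_{jj}(x_0)|^{\alpha_j}\bigr|\leq \bigl\||A_{jj}(\cdot)|^{\alpha_j}-|A_{jj}(x_0)|^{\alpha_j}\bigr\|_{L^\infty(\R^d)}\leq \eta.
\]
Applying the triangle inequality in $L^p(\R^d)$ to the displayed representation of $\widetilde{\mathcal{B}_n}R_0f(\cdot,s,\omega)$, pulling out these scalar factors, and invoking the $L^p$-bound $\|\mathcal{M}_jR_0f\|_p\leq 2a(p^{\ast}-1)\|f\|_p$ exactly as in the proof of Proposition~\ref{BR0} (via \eqref{Lpj}), we get
\[
 \|\widetilde{\mathcal{B}_n}R_0f(\cdot,s,\omega)\|_p\leq \frac12\sum_{j=1}^d \eta\,\|\mathcal{M}_jR_0f\|_p\leq \eta\,d\,a\,(p^{\ast}-1)\|f\|_p\leq \frac14\|f\|_p,
\]
the last inequality by assumption \eqref{etaassumption}. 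Since this bound does not depend on $(s,\omega)$, taking the supremum over $s\geq0$ and $\omega\in\Omega$ yields the assertion.

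I do not expect a genuine obstacle here: the estimate is the same chain of inequalities as in Proposition~\ref{BR0}, and the only point is that freezing $(s,\omega)$ turns $\widetilde{\mathcal{B}_n}$ into an operator of precisely the form treated there, with the supremum in the definition \eqref{coeffeta} of $\eta$ already uniform over all the frozen coefficients that can occur. The only minor care needed is that the symmetrization and the substitution are justified for $R_0f$, which is legitimate since $R_0f\in C_b^2(\R^d)$ by hypothesis --- this is precisely why the hypothesis is stated in that form.
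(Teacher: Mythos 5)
Your proof is correct and follows essentially the same route as the paper: rewrite $\widetilde{\mathcal{B}_n}$ via Lemma \ref{second-order} as weighted second-order differences, observe that $\sup_{j}\sup_{(s,\omega)}\bigl||\widetilde{A_{j}^n}(s,\omega)|^{\alpha_j}-|A_{jj}(x_0)|^{\alpha_j}\bigr|\leq\eta$ because each frozen coefficient is a value of $A_{jj}$, and then repeat the estimate of Proposition \ref{BR0} using \eqref{Lpj} and \eqref{etaassumption}. The paper's own proof is exactly this reduction, stated more tersely.
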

\begin{proof}
Note that we can rewrite the operator $\widetilde{\mathcal{B}_n}$ by Lemma \ref{second-order} with weighted second order differences. \\
Since
\[  \sup\limits_{j\in\{1,\dots,d\}}\sup\limits_{(s,\omega)\in[0,\infty)\times\Omega}\left||\widetilde{A_{jj}^n}(s,\omega)|^{\alpha_j}-|A_{jj}(x_0)|^{\alpha_j}\right|\leq \eta, \]
we also have \eqref{etaassumption} if we take $\widetilde{A_{jj}^n}$ instead of $A_{jj}$. \\
The proof of Proposition \ref{Lptilde} is now similar to the proof of Proposition \ref{BR0}.
\end{proof}
We now improve the result of Lemma \ref{vlambdaabsch} by proving that there is an upper bound independent of $n$ such that the result holds. 
At first we prove the result for compactly supported functions whose resolvents are in $C_b^2(\R^d)$ and afterwards, in Corollary \ref{corrbdd}, we show by an elementary limit argument that the result
is true for functions in $C_b$ with resolvents in $C_b^2(\R^d)$.
\begin{theorem}\label{vlambda}
 Suppose $p>\beta$ and $\lambda>0$. There exists a constant
 $c_1>0$, such that for all $n\in\N$ and $g\in C_c(\R^d)$ with $R_0g\in C_b^2(\R^d)$
 \[ |V_\lambda ^n g|\leq c_1\|g\|_p. \]
\end{theorem}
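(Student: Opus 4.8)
The plan is to run the Stroock--Varadhan perturbation scheme: represent $V_\lambda^n g$ in terms of the $\lambda$--resolvent $R_\lambda$ of the frozen L\'evy process $(U_t)$ plus a perturbation term, and then close the estimate by a self--improving bound for the operator norm of $V_\lambda^n$ on $L^p(\R^d)$. To this end I would fix $\lambda>0$ and $p>\beta$ and put
\[ \Theta_n:=\sup\bigl\{\,|V_\lambda^n f|\colon f\in C_b(\R^d)\cap L^p(\R^d),\ \|f\|_p\le 1\,\bigr\}. \]
By Lemma~\ref{vlambdaabsch} we have $\Theta_n<\infty$ for each fixed $n$, and $V_\lambda^n$ is Lipschitz on $C_b(\R^d)\cap L^p(\R^d)$ with respect to $\|\cdot\|_p$; since $C_c^\infty(\R^d)$ is dense in $L^p(\R^d)$, it follows that $\Theta_n$ equals the supremum of $|V_\lambda^n g|$ over $g\in C_c^\infty(\R^d)$ with $\|g\|_p\le 1$, so it suffices to bound this latter supremum uniformly in $n$. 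For $g\in C_c^\infty(\R^d)$ one checks on the Fourier side that $R_\lambda g\in C_b^2(\R^d)$ (its Fourier transform is $\widehat g/(\lambda+\Psi)$, and $|\xi|^2\widehat g(\xi)/(\lambda+\Psi(\xi))\in L^1$), so $R_\lambda g$ is an admissible test function for the operators $\widetilde{\mathcal L_n}$, and $\mathcal L_0 R_\lambda g=-g+\lambda R_\lambda g$ by Lemma~\ref{multiplier1}.

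The key identity comes from Dynkin's formula for the truncated process. Arguing exactly as in Proposition~\ref{martingale}, but on each of the successive intervals on which the coefficients of $U^n$ are constant, $\phi(U_t^n)-\phi(x_0)-\int_0^t\widetilde{\mathcal L_n}\phi(U_s^n,s,\omega)\,\d s$ is a $\mathds P$--martingale for every $\phi\in C_b^2(\R^d)$. Multiplying by $e^{-\lambda t}$, integrating by parts, letting $t\to\infty$ (justified because $\phi$ and $\widetilde{\mathcal L_n}\phi$ are bounded and $\lambda>0$) and taking expectations gives
\[ \lambda V_\lambda^n\phi=\phi(x_0)+\mathds E\Bigl[\int_0^\infty e^{-\lambda s}\widetilde{\mathcal L_n}\phi(U_s^n,s,\omega)\,\d s\Bigr]. \]
Applying this with $\phi=R_\lambda g$ and substituting $\widetilde{\mathcal L_n}R_\lambda g=\mathcal L_0R_\lambda g+\widetilde{\mathcal B_n}R_\lambda g=-g+\lambda R_\lambda g+\widetilde{\mathcal B_n}R_\lambda g$, the term $\lambda V_\lambda^n R_\lambda g$ cancels on both sides and one is left with
\[ V_\lambda^n g=R_\lambda g(x_0)+\mathds E\Bigl[\int_0^\infty e^{-\lambda s}\widetilde{\mathcal B_n}R_\lambda g(U_s^n,s,\omega)\,\d s\Bigr]. \]

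Now I would estimate the two terms. The first one is $|R_\lambda g(x_0)|\le c\|g\|_p$ with $c=\|p_1(\cdot)\|_q\int_0^\infty e^{-\lambda t}t^{-\beta/p}\,\d t<\infty$, by Proposition~\ref{resolventenabsch}(3) (this is where $p>\beta$ enters). For the perturbation term I would write $\widetilde{\mathcal B_n}$ with weighted second differences as in Lemma~\ref{second-order}, so that pointwise in $x$ and uniformly in $(s,\omega)$,
\[ \bigl|\widetilde{\mathcal B_n}R_\lambda g(x,s,\omega)\bigr|\le\tfrac12\sum_{j=1}^d\bigl||\widetilde{A^n_j}(s,\omega)|^{\alpha_j}-|A_{jj}(x_0)|^{\alpha_j}\bigr|\,\bigl|\mathcal M_j R_\lambda g(x)\bigr|\le G(x):=\tfrac{\eta}{2}\sum_{j=1}^d\bigl|\mathcal M_j R_\lambda g(x)\bigr|, \]
where $G\in C_b(\R^d)\cap L^p(\R^d)$ because $R_\lambda g\in C_b^2(\R^d)$. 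On the Fourier side $\widehat{\mathcal M_j R_\lambda g}=\tfrac{\Psi}{\lambda+\Psi}\,\widehat{\mathcal M_j R_0 g}$, i.e.\ $\mathcal M_j R_\lambda g=(\mathrm{Id}-\lambda R_\lambda)\mathcal M_j R_0 g$, so Proposition~\ref{resolventenabsch}(1) and \eqref{Lpj} give $\|\mathcal M_j R_\lambda g\|_p\le 2\|\mathcal M_j R_0 g\|_p\le 4a(p^\ast-1)\|g\|_p$, whence, using \eqref{etaassumption},
\[ \|G\|_p\le 2\eta\,d\,a(p^\ast-1)\,\|g\|_p\le\tfrac12\|g\|_p \]
(alternatively $\|G\|_p\le\frac12\|g\|_p$ follows from Proposition~\ref{Lptilde} applied to $R_0 g$ and to $R_0(R_\lambda g)$ together with $R_\lambda g=R_0 g-\lambda R_0R_\lambda g$). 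Hence the perturbation term is bounded in absolute value by $\mathds E\bigl[\int_0^\infty e^{-\lambda s}G(U_s^n)\,\d s\bigr]=V_\lambda^n G\le\Theta_n\|G\|_p\le\tfrac12\Theta_n\|g\|_p$, the step $V_\lambda^n G\le\Theta_n\|G\|_p$ being valid because $G\in C_b(\R^d)\cap L^p(\R^d)$.

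Combining the two estimates gives $|V_\lambda^n g|\le c\|g\|_p+\tfrac12\Theta_n\|g\|_p$ for all $g\in C_c^\infty(\R^d)$; taking the supremum over $\|g\|_p\le1$ yields $\Theta_n\le c+\tfrac12\Theta_n$, so $\Theta_n\le 2c$ uniformly in $n$, and the theorem follows with $c_1=2c$ since every $g\in C_c(\R^d)$ with $R_0 g\in C_b^2(\R^d)$ lies in $L^p(\R^d)$ and satisfies $|V_\lambda^n g|\le\Theta_n\|g\|_p$. I expect the main obstacle to be the treatment of the perturbation term: its integrand $\widetilde{\mathcal B_n}R_\lambda g(U_s^n,s,\omega)$ depends on both $s$ and $\omega$, and naively conditioning at the dyadic times $k/2^n$ reintroduces the non--integrable weight $(s-k/2^n)^{-\beta/p}$ summed over all intervals, i.e.\ precisely the $n$--dependent bound of Lemma~\ref{vlambdaabsch}. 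The point is therefore to dominate $\widetilde{\mathcal B_n}R_\lambda g(\cdot,s,\omega)$ by the single deterministic function $G$ — whose $L^p$--norm is made small exactly by the localization assumption \eqref{etaassumption} — and to keep all constants independent of $n$, which is why it is essential that the $L^p$--estimate for $\mathcal M_j R_\lambda g$ is reduced to the $n$--independent frozen operator $\mathcal M_j R_0 g$. A secondary technical point is to justify the Dynkin/resolvent identity for the time--inhomogeneous process $U^n$ and the interchange of limit and expectation.
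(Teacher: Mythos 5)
Your proposal is correct and follows essentially the same route as the paper: the resolvent identity $V_\lambda^n g=R_\lambda g(x_0)+\mathds{E}\bigl[\int_0^\infty e^{-\lambda s}\widetilde{\mathcal B_n}R_\lambda g(U_s^n,s,\omega)\,\d s\bigr]$, the bound $|R_\lambda g(x_0)|\le c\|g\|_p$ from Proposition \ref{resolventenabsch}, the $L^p$-smallness of the perturbation via \eqref{Lpj}/Proposition \ref{Lptilde} under \eqref{etaassumption}, and the self-improving estimate $\Theta_n\le c+\tfrac12\Theta_n$. Your domination of the integrand by the deterministic, $(s,\omega)$- and $n$-independent function $G$ is a slightly more explicit rendering of the step the paper writes as $V_\lambda^n(|\widetilde{\mathcal B_n}R_0h|)\le\Theta_n\|\widetilde{\mathcal B_n}R_0h\|_p$, but the argument is the same (the paper passes through $h=g-\lambda R_\lambda g$, $R_0h=R_\lambda g$, which you note as your alternative route).
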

\begin{proof}
The idea of the proof is to write $V_\lambda^nf$ in terms of $R_\lambda f$ and $\widetilde{B}R_\lambda f$ for a function $f\in C_b^2(\R^d)$ 
and use Proposition \ref{Lptilde} to get an upper bound independent of $n$. Similar to the proof of \cite[Theorem 5.3]{BCS} we can show that for $f\in C_b^2(\R^d)$
\begin{equation}\label{vln}
\begin{aligned}
V_\lambda^n f & = \frac{1}{\lambda} \mathds{E}[f(U_0^n)] + \frac{1}{\lambda}\mathds{E}\left[\int_0^\infty e^{-\lambda s}\widetilde{\mathcal{L}_n}f(U^n_s(\omega),s,\omega)\, \d s\right].
\end{aligned}
\end{equation}
Let $g\in C_c(\R^d)$ with $R_0 g\in C_b^2(\R^d)$. Then
\[ (\lambda-\mathcal{L}_0)R_{\lambda} g(x) = g(x) \iff \mathcal{L}_0R_{\lambda}g(x) = -g(x)+\lambda R_{\lambda}g(x). \]
Hence
\begin{equation}\label{resolid}
  \widetilde{\mathcal{L}_n}R_{\lambda}g(x,s,\omega) = \widetilde{\mathcal{B}_n}R_\lambda g(x,s,\omega)-g(x)+\lambda R_\lambda g(x).
\end{equation}
Let $f=R_{\lambda}g \in C_b^2(\R^d)$. Plugging \eqref{resolid} into \eqref{vln} for $f=R_{\lambda}g$ yields
\begin{align*}
V_\lambda^n R_\lambda g = \frac{1}{\lambda}\mathds{E}[R_\lambda g(U_0^n)] + \frac{1}{\lambda}\mathds{E}\left[\int_0^\infty e^{-\lambda s}\widetilde{\mathcal{B}_n}R_\lambda g(U^n_s(\omega),s,\omega)\, \d s\right] - \frac{1}{\lambda}V_\lambda^n g + V_\lambda^nR_\lambda g,
\end{align*}
which is equivalent to
\[ V_\lambda^n g = \mathds{E}[R_\lambda g(U_0^n)] + \mathds{E}\left[\int_0^\infty e^{-\lambda s}\widetilde{\mathcal{B}_n}R_\lambda g(U^n_s(\omega),s,\omega)\, \d s\right].\]
Let $h=g-\lambda R_{\lambda}g$. Then by Lemma \ref{resolnull}
\[ R_0h=R_0(g-\lambda R_{\lambda}g) = R_0g - \lambda R_0R_{\lambda} = R_{\lambda}g.\] 
Thus $R_0h\in C_b^2(\R^d)$.
Using the triangle inequality and Proposition \ref{resolventenabsch}, we get
\[ \|h\|_p \leq \|g\|_p+\|\lambda R_\lambda g\|_p \leq 2\|g\|_p. \]
Note
\begin{align*}
 \left| \mathds{E}\left[ \int_0^\infty e^{-\lambda s} \widetilde{\mathcal{B}_n}R_0h(U_s^n(\omega),s,\omega) \, \d s \right] \right| &\leq \mathds{E}\left[ \int_0^\infty e^{-\lambda s}|\widetilde{\mathcal{B}_n}R_0h(U_s^n(\omega),s,\omega)| \, \d s \right]\\
&=V_\lambda^n(|\widetilde{\mathcal{B}_n}R_0h(U_s^n(\omega),s,\omega)|). 
\end{align*}
We define 
\[\Theta_n:=\sup\limits_{\|g\|_p\leq 1}|V_\lambda^n g|. \]
By Lemma \ref{vlambdaabsch} there is a $c_2>0$, depending on $n$, but being independent of $g$, such that $|V_\lambda^n g|\leq c_2\|g\|_p$. 
Hence $\Theta_n\leq c_2<\infty$.\\
Now we need to find a constant, independent of $n$, such that the assertion holds. Note that we have shown $h\in C_0(\R^d)$ with $R_0h\in C_b^2(\R^d)$ which allows us to apply Proposition \ref{Lptilde} on $h$.
By Proposition \ref{resolventenabsch} and Proposition \ref{Lptilde} there exists a $c_3>0$, independent of $n$, such that
\begin{align*}
|V_\lambda^n g| &= \left|R_\lambda g(x_0) + \mathds{E}\left[\int_0^\infty e^{-\lambda s}\widetilde{\mathcal{B}_n}R_0 h(U_s^n(\omega),s,\omega) \, \d s\right]\right| \\
& \leq \left|R_\lambda g(x_0)\right| + \left|\mathds{E}\left[\int_0^\infty e^{-\lambda s}\widetilde{\mathcal{B}_n}R_0 h(U_s^n(\omega),s,\omega) \, \d s\right]\right| \\
& \leq c_3\|g\|_p + V_\lambda^n(|\widetilde{\mathcal{B}_n}R_0h(U_s^n(\omega),s,\omega)|) \\
& \leq c_3\|g\|_p + \Theta_n(\|\widetilde{\mathcal{B}_n}R_0h(U_s^n(\omega),s,\omega)\|_p) \\
& \leq c_3\|g\|_p + \Theta_n\left(\frac14 \|h\|_p\right) \leq \|g\|_p \left(c_3+\frac12 \Theta_n\right).
\end{align*}
Taking the supremum over all $g\in C_c(\R^d)$ with $R_0g\in C_b^2(\R^d)$ and $\|g\|_p\leq 1$, we get
\[ \Theta_n \leq c_3 + \frac12 \Theta_n \iff \Theta_n \leq 2c_3<\infty, \]
which proves the assertion for $g\in C_c(\R^d)$ with $R_0g\in C_b^2(\R^d)$.
\end{proof}
Note that we had to take $g\in C_c(\R^d)$ with $R_0g\in C_b^2(\R^d)$ in the proof of Theorem \ref{vlambda} so that the expressions in the proof are well-defined.
By a standard limit argument we conclude.
\begin{corollary}\label{corrbdd}
 Suppose $p>\beta$ and $\lambda>0$. There exists a constant
 $c_1>0$, such that for all $n\in\N$ and $f\in C_b(\R^d)$ with $R_0f\in C_b^2(\R^d)$
 \[ |V_\lambda ^n f|\leq c_1\|f\|_p. \]
\end{corollary}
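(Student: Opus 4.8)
The plan is to deduce the statement from Theorem \ref{vlambda} by an approximation that removes the compact-support hypothesis, carried out for each fixed $n$ separately; this suffices because the constant $c_1$ produced by Theorem \ref{vlambda} does not depend on $n$. We may assume $f\in L^p(\R^d)$, since otherwise $\|f\|_p=\infty$ and there is nothing to prove.

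The first step is to exhibit a convenient subclass of admissible test functions. By \eqref{R0} and Tonelli's theorem, $R_0g=r_0\ast g$ for $g\in C_c(\R^d)$, where $r_0(z):=\int_0^\infty p_t(z)\,\d t$ is the symmetric potential density of $(U_t)_{t\geq0}$; here one uses that $r_0\in L^1_{\mathrm{loc}}(\R^d)$, which holds because $(U_t)_{t\geq0}$ is transient by Proposition \ref{transient}, so that its potential measure is Radon. Consequently, if $g\in C_c^\infty(\R^d)$, then for $x$ ranging over a fixed ball the integrand of $r_0\ast\partial^{a}g$ is supported in a fixed compact set on which $r_0$ is integrable; hence differentiation under the integral sign is legitimate, $\partial^{a}(r_0\ast g)=r_0\ast\partial^{a}g$ for every multi-index $a$, and therefore $R_0g\in C_b^\infty(\R^d)\subset C_b^2(\R^d)$. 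Thus every $g\in C_c^\infty(\R^d)$ satisfies the hypotheses of Theorem \ref{vlambda}, so $|V_\lambda^n g|\leq c_1\|g\|_p$ for all $n\in\N$, with one and the same $c_1$.

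The second step is the approximation proper. Fix a standard mollifier $(\rho_\eps)_{\eps>0}\subset C_c^\infty(\R^d)$ and cut-offs $\psi_k\in C_c^\infty(\R^d)$ with $0\leq\psi_k\leq1$ and $\psi_k\equiv1$ on $B_k(0)$. For each $k$ choose $\eps_k>0$ small enough that $\|\psi_k(f\ast\rho_{\eps_k}-f)\|_\infty\leq 1/k$ (possible since $f\ast\rho_\eps\to f$ uniformly on the compact set $\supp\psi_k$), and put $g_k:=\psi_k(f\ast\rho_{\eps_k})\in C_c^\infty(\R^d)$. By Young's inequality $\|g_k\|_p\leq\|f\ast\rho_{\eps_k}\|_p\leq\|f\|_p$ and $\|g_k\|_\infty\leq\|f\|_\infty$, while $g_k\to f$ pointwise on $\R^d$ because $|g_k(x)-f(x)|\leq 1/k$ whenever $|x|\leq k$. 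For fixed $n$, the functional $V_\lambda^n$ is integration against the finite Borel measure $\mu_n(B):=\mathds{E}\bigl[\int_0^\infty e^{-\lambda t}\mathds{1}_{\{U_t^n\in B\}}\,\d t\bigr]$, of total mass $1/\lambda$, so dominated convergence gives $V_\lambda^n g_k\to V_\lambda^n f$ as $k\to\infty$. Combining this with $|V_\lambda^n g_k|\leq c_1\|g_k\|_p\leq c_1\|f\|_p$ yields $|V_\lambda^n f|\leq c_1\|f\|_p$ for every $n$, with the same $c_1$ as in Theorem \ref{vlambda}.

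The one genuinely non-routine ingredient is the first step: one needs the potential operator $R_0$ to regularise $C_c^\infty$ data back into $C_b^2$, so that Theorem \ref{vlambda} can be applied to a class rich enough for the limiting argument. This rests on local integrability of $r_0$, i.e.\ on transience of $(U_t)_{t\geq0}$, which is available here precisely because $\beta=\sum_{j}1/\alpha_j>1$ (a consequence of $d\geq3$ and $\alpha_j\in(0,2)$). Everything else — mollification, truncation, Young's inequality and dominated convergence against the finite measures $\mu_n$ — is routine.
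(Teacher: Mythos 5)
Your proof is correct and follows the same skeleton as the paper's: approximate $f$ by compactly supported functions, apply Theorem \ref{vlambda} (whose constant is independent of $n$ and of the test function), and pass to the limit by dominated convergence, since $V_\lambda^n$ is integration against a finite measure of total mass $1/\lambda$. The genuine difference lies in the admissibility of the approximants. The paper simply truncates, taking $g_m\in C_c(\R^d)$ with $g_m=f$ on $B_m$, and never checks the hypothesis $R_0g_m\in C_b^2(\R^d)$ required by Theorem \ref{vlambda}; such cutoffs have no reason to inherit this property from $f$. Your detour through mollified cutoffs $g_k\in C_c^{\infty}(\R^d)$, combined with the smoothing property of $R_0$ obtained from the convolution representation $R_0g=r_0\ast g$ and the transience of $(U_t)_{t\geq0}$ (Proposition \ref{transient}), supplies exactly this missing verification, so on this point your argument is more complete than the one in the paper. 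One detail to tighten: the domination argument you give for differentiating under the integral yields $\partial^{a}(r_0\ast g)=r_0\ast\partial^{a}g$ together with boundedness of these derivatives only on fixed balls; the global boundedness needed for $R_0g\in C_b^2(\R^d)$ follows in one line from $\partial^{a}R_0g=R_0\partial^{a}g$ and the paper's observation, stated just before \eqref{R0}, that transience gives $R_0h\in C_0(\R^d)$ for every $h\in C_c(\R^d)$. Note finally that your argument never uses the hypothesis $R_0f\in C_b^2(\R^d)$ on $f$ itself, so you in fact prove the bound for all $f\in C_b(\R^d)\cap L^p(\R^d)$, which implies the stated corollary.
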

\begin{proof}
By Theorem \ref{vlambda} we already know the result holds for compactly supported functions whose resolvents are in $C_b^2(\R^d)$. 
The assertion on $C_b(\R^d)$ instead of $C_c(\R^d)$ follows by dominated convergence. 
Let $f\in C_b(\R^d)$. Without loss of generality, we can assume $f\in L^p(\R^d)$. Otherwise
the right hand side of the assertion is infinite and the statement trivially holds true. 
Let $g_m\in C_c(\R^d)$ such that $g_m=f$ on $B_m$ and $\supp(g_m)\subset B_{m+1}$. Then $g_m\to f$ as $m\to\infty$.
Since $f\in L^p(\R^d)$, by dominated convergence $g_m$ also converges to $f$ in $L^p$. Moreover, since $g_m$ and $f$ are bounded, we have 
\[ \lim\limits_{m\to\infty} |V_{\lambda}^n g_m(x)| =  |V_{\lambda}^n \lim\limits_{m\to\infty} g_m(x)|, \]
which finishes the proof.
\end{proof}
Finally, we can prove the desired result.
\begin{corollary}\label{slambdabeschr}
 Suppose $p>\beta$. There exists a constant
 $c_1>0$, such that for all $f\in C_b(\R^d)$ with $R_0f\in C_b^2(\R^d)$
 \[ |S_\lambda f|\leq c_1\|f\|_p. \]
\end{corollary}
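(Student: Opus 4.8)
The plan is to deduce the corollary directly from the two preceding results: the uniform-in-$n$ resolvent bound of Corollary \ref{corrbdd} and the convergence $V_\lambda^n f \to S_\lambda f$ of Lemma \ref{resolconv}. All of the substantive work has already been done in Theorem \ref{vlambda}, so the proof is a short limiting argument.

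First I would fix $\lambda>0$ and take $f\in C_b(\R^d)$ with $R_0f\in C_b^2(\R^d)$. As usual one may assume $f\in L^p(\R^d)$, since otherwise $\|f\|_p=\infty$ and the inequality is vacuous. By Corollary \ref{corrbdd} there is a constant $c_1=c_1(\lambda,p)>0$, and crucially one that does \emph{not} depend on $n$, such that
\[ |V_\lambda^n f| \leq c_1\|f\|_p \qquad \text{for every } n\in\N. \]

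Next I would invoke Lemma \ref{resolconv}, which gives $\lim_{n\to\infty} V_\lambda^n f = S_\lambda f$ for the bounded continuous function $f$ at hand. Passing to the limit $n\to\infty$ in the displayed inequality — the right-hand side being independent of $n$ — yields $|S_\lambda f|\leq c_1\|f\|_p$, which is precisely the assertion.

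There is essentially no obstacle in this final step; the difficulty was concentrated in establishing the $n$-independent bound, obtained in the proof of Theorem \ref{vlambda} via the representation \eqref{vln}, the resolvent identity of Lemma \ref{resolnull}, and the perturbation estimate of Proposition \ref{Lptilde}. The only points deserving a moment's care are that the constant furnished by Corollary \ref{corrbdd} is genuinely uniform in $n$ (so that it survives the limit) and that $f$ is bounded and continuous, which is exactly the hypothesis under which Lemma \ref{resolconv} applies.
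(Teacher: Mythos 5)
Your argument is correct and coincides with the paper's own proof: combine the $n$-independent bound $|V_\lambda^n f|\leq c_1\|f\|_p$ from Corollary \ref{corrbdd} with the convergence $V_\lambda^n f\to S_\lambda f$ of Lemma \ref{resolconv} and pass to the limit. Nothing further is needed.
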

\begin{proof}
By Lemma \ref{resolconv} and Corollary \ref{corrbdd} we get
\[ |S_\lambda f|=\lim\limits_{n\to\infty} |V_\lambda^n f| \leq \lim\limits_{n\to\infty} c_1 \|f\|_p = c_1\|f\|_p, \]
where we have used the fact that $c_1$ is independent of $n$.
\end{proof}
The next proposition gives a representation of $S_{\lambda}f$. See \cite[Proposition 6.1]{BCS} for a proof.
\begin{proposition}\label{slambdaprop}
 Let $f\in C_b(\R^d)$ with $R_{\lambda}f\in C_b^2(\R^d)$ and $\lambda >0$. Then
 \[ S_\lambda f = R_\lambda f(x_0) + S_\lambda \mathcal{B}R_\lambda f. \]
\end{proposition}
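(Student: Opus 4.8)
The plan is to apply the martingale property of $\mathds{P}$ from Definition~\ref{defmart} to the bounded $C^2$ function $g:=R_\lambda f$, convert it into a statement about the discounted process $e^{-\lambda t}g(X_t)$, and then identify the resulting integrand by combining the resolvent equation for $\mathcal{L}_0$ with the decomposition $\mathcal{L}=\mathcal{L}_0+\mathcal{B}$. Throughout, the hypothesis $R_\lambda f\in C_b^2(\R^d)$ guarantees that $\mathcal{L}g$, $\mathcal{L}_0 g$ and hence $\mathcal{B}g$ are bounded continuous functions.

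Since $g=R_\lambda f\in C_b^2(\R^d)$, Definition~\ref{defmart} (equivalently Proposition~\ref{martingale}) shows that $M_t:=g(X_t)-g(X_0)-\int_0^t\mathcal{L}g(X_s)\,\d s$ is a $\mathds{P}$-martingale. I would then pass to the discounted process: integrating by parts against the deterministic finite-variation function $s\mapsto e^{-\lambda s}$ --- or, staying entirely inside the martingale-problem framework, computing $\mathds{E}[e^{-\lambda t}g(X_t)-e^{-\lambda u}g(X_u)\mid\mathcal{F}_u]$ directly from the defining property of $M$ --- one obtains that
\[
e^{-\lambda t}g(X_t)-g(X_0)-\int_0^t e^{-\lambda s}\bigl(\mathcal{L}g(X_s)-\lambda g(X_s)\bigr)\,\d s
\]
is a $\mathds{P}$-martingale vanishing at $t=0$. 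Taking expectations and letting $t\to\infty$, the boundedness of $g$ gives $|e^{-\lambda t}g(X_t)|\leq e^{-\lambda t}\|g\|_\infty\to 0$, while the boundedness of $\mathcal{L}g$ allows dominated convergence inside the time integral, so that
\[
0=g(x_0)+\mathds{E}\!\left[\int_0^\infty e^{-\lambda s}\bigl(\mathcal{L}g(X_s)-\lambda g(X_s)\bigr)\,\d s\right].
\]

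It remains to identify the integrand. Because $R_\lambda f\in C_b^2(\R^d)$ and $\mathcal{L}_0$ coincides there with the generator $\mathcal{A}$ of $(P_t)_{t\geq0}$, the resolvent equation $(\lambda-\mathcal{A})R_\lambda f=f$ yields $\mathcal{L}_0 g=\lambda R_\lambda f-f$, whence $\mathcal{L}g-\lambda g=\mathcal{L}_0 g+\mathcal{B}g-\lambda g=-f+\mathcal{B}R_\lambda f$, the $\lambda R_\lambda f$ terms cancelling. Substituting this into the identity above and recalling $S_\lambda h=\mathds{E}[\int_0^\infty e^{-\lambda s}h(X_s)\,\d s]$ from \eqref{slambda}, one gets $0=R_\lambda f(x_0)-S_\lambda f+S_\lambda\mathcal{B}R_\lambda f$, which is exactly the claimed identity. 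The main obstacle is the passage to the discounted martingale: it must be justified either by invoking that a weak solution of \eqref{SDEM} is a semimartingale with the expected characteristics, so that the classical product rule applies to $e^{-\lambda t}g(X_t)$, or by a telescoping argument over a partition of $[0,t]$ that uses only the martingale property of $M$. Once that step is in place, the interchange of limit and expectation and the convergence as $t\to\infty$ are routine consequences of the boundedness of $g$ and of $\mathcal{L}g$.
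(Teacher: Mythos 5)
Your argument is correct and is essentially the proof the paper delegates to \cite[Proposition 6.1]{BCS}: apply the martingale property to $g=R_\lambda f\in C_b^2(\R^d)$, discount by $e^{-\lambda t}$, let $t\to\infty$, and combine $(\lambda-\mathcal{L}_0)R_\lambda f=f$ with the decomposition $\mathcal{L}=\mathcal{L}_0+\mathcal{B}$. The step you flag as the main obstacle can be bypassed without any semimartingale calculus: take expectations in the martingale identity to get $\mathds{E}[g(X_t)]=g(x_0)+\mathds{E}\int_0^t\mathcal{L}g(X_s)\,\d s$ for every $t$, multiply by $\lambda e^{-\lambda t}$, integrate over $t\in(0,\infty)$ and apply Fubini (legitimate since $g$ and $\mathcal{L}g$ are bounded), which produces exactly the discounted identity you need.
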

\subsection{Auxiliary results}\label{auxsub}
Before we prove the main result, we prove some important technical results.
The following theorem states that in order to prove uniqueness of solutions to the martingale problem, 
it is sufficient to prove uniqueness for the corresponding resolvents. 
\begin{theorem}\label{laplacegleich}
 Let $\mathds{P}_1,\mathds{P}_2$ be two solutions to the martingale problem for $\mathcal{L}$ started at $x_0$.
 Suppose for all $x\in\R^d$, $\lambda>0$ and $f\in C_b^2(\R^d)$,
 \[ \mathds{E}_1\left[\int_0^{\infty} e^{-\lambda t} f(X_t) \, \d t \right]=\mathds{E}_2\left[\int_0^{\infty} e^{-\lambda t} f(X_t) \, \d t \right]. \]
 Then for each $x_0\in\R^d$ the solution to the martingale problem for $\mathcal{L}$ has a unique solution.
\end{theorem}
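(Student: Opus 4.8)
The plan is to show that the two solutions $\mathds{P}_1$ and $\mathds{P}_2$ coincide as measures on the Skorohod space $\mathds{D}([0,\infty))$, by proving that they have the same finite-dimensional distributions. Since the cylinder sets generate the Borel $\sigma$-algebra on $\mathds{D}([0,\infty))$ and $C_b^2(\R^d)$ is measure-determining on $\R^d$, it suffices to establish, for every $m\in\N$, every $0\le t_1<\dots<t_m$ and all $f_1,\dots,f_m\in C_b(\R^d)$, that
\[ \mathds{E}_1\Big[\prod_{k=1}^m f_k(X_{t_k})\Big]=\mathds{E}_2\Big[\prod_{k=1}^m f_k(X_{t_k})\Big]. \]
I would prove this by induction on $m$, \emph{simultaneously for all starting points}; that is, the induction hypothesis at level $m$ is that any two solutions of the martingale problem for $\mathcal{L}$ started at the \emph{same} point $x\in\R^d$ have identical $m$-dimensional distributions. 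The level $m=1$ statement is exactly the assumption of the theorem after invoking uniqueness of Laplace transforms: for fixed $f\in C_b^2(\R^d)$ and a starting point, the map $t\mapsto \mathds{E}_i[f(X_t)]$ is bounded and, because paths are c\`adl\`ag and $f$ is bounded and continuous, right-continuous (dominated convergence); since its Laplace transform $\lambda\mapsto \mathds{E}_i[\int_0^\infty e^{-\lambda t}f(X_t)\,\d t]$ does not depend on $i$ by hypothesis, the one-dimensional marginals of $X_t$ agree for all $t\ge 0$; as $C_b^2(\R^d)$ separates probability measures on $\R^d$, this extends to all bounded Borel $f$, and $X_0=x$ a.s. under both.

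For the inductive step I would use the standard structural fact that regular conditional probabilities of a martingale-problem solution are themselves martingale-problem solutions: if $\mathds{P}$ solves the martingale problem for $\mathcal{L}$ started at $x$ and $s\ge 0$, then there is a regular conditional probability distribution $\omega\mapsto\mathds{P}^\omega=\mathds{P}(\,\cdot\mid\mathcal{F}_s)(\omega)$ such that for $\mathds{P}$-a.e.\ $\omega$ the law under $\mathds{P}^\omega$ of the shifted process $(X_{s+t})_{t\ge 0}$ solves the martingale problem for $\mathcal{L}$ started at $X_s(\omega)$ (see e.g.\ \cite[Chapter 6]{STROOCK} and the analogous argument in \cite{BCS}). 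Granting this, fix $t_1<\dots<t_{m+1}$, put $s=t_1$, and condition on $\mathcal{F}_s$: for $\mathds{P}_i$-a.e.\ $\omega$,
\[ \mathds{E}_i\Big[\prod_{k=2}^{m+1}f_k(X_{t_k})\,\Big|\,\mathcal{F}_{s}\Big](\omega)=\int \prod_{k=2}^{m+1} f_k\big(y(t_k-s)\big)\,\mathds{Q}_i^{\,X_s(\omega)}(\d y), \]
where $\mathds{Q}_i^{\,x}$ is a solution of the martingale problem started at $x$. By the induction hypothesis at level $m$, applied at the $m$ time points $t_2-s<\dots<t_{m+1}-s$ to the two solutions $\mathds{Q}_1^{\,x},\mathds{Q}_2^{\,x}$ started at the common point $x$, the right-hand side equals a single bounded Borel function $\Phi(X_s(\omega))$ independent of $i$. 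Hence
\[ \mathds{E}_i\Big[\prod_{k=1}^{m+1}f_k(X_{t_k})\Big]=\mathds{E}_i\big[f_1(X_s)\,\Phi(X_s)\big], \]
and since $f_1\Phi$ is bounded Borel and the level $m=1$ statement (extended from $C_b$ to bounded Borel integrands by a monotone-class argument) holds at the relevant starting point, the two sides agree for $i=1,2$. This closes the induction, so $\mathds{P}_1=\mathds{P}_2$, which is the claimed uniqueness for each $x_0\in\R^d$.

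The main obstacle, and the step I would write out most carefully, is the regular-conditional-probability lemma: verifying that for $\mathds{P}$-almost every $\omega$ the conditioned, shifted process again solves the martingale problem for $\mathcal{L}$. The difficulty is purely measure-theoretic — one must control the exceptional null set uniformly over all test functions and all times. The way to do it is to fix a countable family of functions that is dense in $C_b^2(\R^d)$ in the $\|\cdot\|_\infty$-norm together with their images under $\mathcal{L}$, establish the martingale identity for this countable family and all rational pairs $s\le t$ via the tower property of conditional expectations (so that only a countable union of null sets is discarded), and then upgrade to all $f\in C_b^2(\R^d)$ and all real $s\le t$ using right-continuity of paths and dominated convergence. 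Everything else — uniqueness of Laplace transforms in the base case, the monotone-class extension to bounded Borel integrands, and the bookkeeping in the induction — is routine.
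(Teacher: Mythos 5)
Your argument is correct and is essentially the same as the proof the paper invokes: the paper does not prove Theorem \ref{laplacegleich} itself but refers to \cite[Theorem V.3.2]{BAD}, whose proof is exactly your scheme — deduce equality of one-dimensional marginals from the resolvent identity by uniqueness of Laplace transforms and right-continuity, then induct on the number of time points using the fact that regular conditional probabilities of a solution, shifted in time, again solve the martingale problem started at $X_s(\omega)$ (handled, as you indicate, by checking the martingale identity for a countable family of test functions and rational times and passing to the limit). The only cosmetic point is that $C_b^2(\R^d)$ is not sup-norm separable, so the countable family should be taken in, say, $C_c^2(\R^d)$ with the martingale property for general $f\in C_b^2(\R^d)$ recovered by the bounded-convergence argument you already describe.
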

A proof of this theorem can be found e.g. in \cite[Theorem V.3.2]{BAD}. 
Although the author studies the martingale problem for the elliptic operator $\mathcal{A}$ in nondivergence form given on $C^2(\R^d)$ by
\[ \mathcal{A}f(x)=\frac12 \sum_{i,j=1}^d a_{ij}(x)\frac{\partial^2f(x)}{\partial x_i \partial x_j} + \sum_{i=1}^d b_i(x)\frac{\partial f(x)}{\partial x_i}, \]
where $a_{ij}$ and $b_i$ are bounded and measurable, the proof of Theorem \ref{laplacegleich} does not significantly change and does apply for a large class of operators. 
Hence, we don't give the proof and refer the reader to \cite[Theorem V.3.2]{BAD}. \\
Recall that assumption \eqref{etaassumption} states
\[\sup\limits_{j\in\{1,\dots,d\}} \||A_{jj}(\cdot)|^{\alpha_j}-|A_{jj}(x_0)|^{\alpha_j}\|_{L^{\infty}(\R^d)} \leq \frac{1}{4da(p^{\ast}-1)},\]
where $a=\max\{|A_{11}(x_0)|^{-\alpha_1},\dots,|A_{dd}(x_0)|^{-\alpha_d}\}$ and $p^{\ast}-1=\max\left\{(p-1),(p-1)^{-1}\right\}$.
By Proposition \ref{BR0} this assumption implies $\|\mathcal{B}R_0h\|_{p}\leq \frac14 \|h\|_p$ for $h\in C_c^2(\R^d)$.  \\
We first prove uniqueness of solutions to the martingale problem for $\mathcal{L}$ under the assumption \eqref{etaassumption}.
\begin{proposition}\label{uniqeta}
 Let $x_0\in\R^d$ and assume \eqref{etaassumption} holds for the coefficients of $\mathcal{L}$.
 Suppose $\mathds{P}_1$ and $\mathds{P}_2$ are two solutions to the martingale problem for $\mathcal{L}$ started at $x_0$. 
 Then $\mathds{P}_1=\mathds{P}_2$.
\end{proposition}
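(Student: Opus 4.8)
The plan is to run the Stroock--Varadhan perturbation scheme with $\mathcal{L}_0$ as frozen operator. By Theorem \ref{laplacegleich} it suffices to show that the $\lambda$-resolvents of $\mathds{P}_1$ and $\mathds{P}_2$ coincide: writing $S_\lambda^i f=\mathds{E}_i[\int_0^\infty e^{-\lambda t}f(X_t)\,\d t]$ for the resolvent operator associated with $\mathds{P}_i$, one has to prove $S_\lambda^1 f=S_\lambda^2 f$ for every $\lambda>0$ and every $f\in C_b^2(\R^d)$.

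Two facts drive the argument. First, Proposition \ref{slambdaprop} gives, for every $f\in C_b(\R^d)$ with $R_\lambda f\in C_b^2(\R^d)$, the identity $S_\lambda^i f=R_\lambda f(x_0)+S_\lambda^i(\mathcal{B}R_\lambda f)$; since $R_\lambda f(x_0)$ does not depend on $i$, subtracting yields $(S_\lambda^1-S_\lambda^2)f=(S_\lambda^1-S_\lambda^2)(\mathcal{B}R_\lambda f)$, i.e.\ the difference of the two resolvents is invariant under $\mathcal{B}R_\lambda$. Second, $\mathcal{B}R_\lambda$ contracts the $L^p$-norm strictly: by Lemma \ref{resolnull} one has $R_\lambda f=R_0(f-\lambda R_\lambda f)$, and for $f\in C_0(\R^d)$ the function $f-\lambda R_\lambda f$ again lies in $C_0(\R^d)$, so Proposition \ref{BR0} applies and, together with $\|\lambda R_\lambda f\|_p\le\|f\|_p$ from Proposition \ref{resolventenabsch}, gives $\|\mathcal{B}R_\lambda f\|_p=\|\mathcal{B}R_0(f-\lambda R_\lambda f)\|_p\le\tfrac14\|f-\lambda R_\lambda f\|_p\le\tfrac12\|f\|_p$. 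It is exactly here that the localization hypothesis \eqref{etaassumption} enters, through Proposition \ref{BR0}.

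Fixing $p>\beta$, Corollary \ref{slambdabeschr} gives $|S_\lambda^i g|\le c_1\|g\|_p$ for all $g$ in the class $\{g\in C_b(\R^d)\colon R_0 g\in C_b^2(\R^d)\}$; this class contains $C_c^\infty(\R^d)$ (one checks $R_0 g\in C_b^2(\R^d)$ for $g\in C_c^\infty(\R^d)$ from $\partial^2 R_0 g=\int_0^\infty P_t\partial^2 g\,\d t$, which converges because $\beta>1$) and is hence $L^p$-dense, so each $S_\lambda^i$ extends to a bounded linear functional on $L^p(\R^d)$ and $\Gamma:=\sup\{|(S_\lambda^1-S_\lambda^2)h|\colon h\in L^p(\R^d),\ \|h\|_p\le 1\}\le 2c_1<\infty$. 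Likewise $C_c^\infty(\R^d)\subseteq\{f\colon R_\lambda f\in C_b^2(\R^d)\}$ (from $\partial^2 R_\lambda f=\int_0^\infty e^{-\lambda t}P_t\partial^2 f\,\d t$), so for $f\in C_c^\infty(\R^d)$ the invariance identity combines with the contraction estimate to give $|(S_\lambda^1-S_\lambda^2)f|\le\Gamma\,\|\mathcal{B}R_\lambda f\|_p\le\tfrac12\Gamma\,\|f\|_p$; by $L^p$-density this inequality passes to all $f\in L^p(\R^d)$, so $\Gamma\le\tfrac12\Gamma$ and hence $\Gamma=0$. Thus $S_\lambda^1 f=S_\lambda^2 f$ for all $f\in L^p(\R^d)$, in particular for all $f\in C_b^2(\R^d)\cap L^p(\R^d)$; a truncation argument as in Corollary \ref{corrbdd} removes the integrability restriction, and Theorem \ref{laplacegleich} then yields $\mathds{P}_1=\mathds{P}_2$.

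The step I expect to cost the most effort is the function-space bookkeeping needed to reconcile Proposition \ref{slambdaprop} (stated on $\{R_\lambda f\in C_b^2\}$), Corollary \ref{slambdabeschr} (stated on $\{R_0 f\in C_b^2\}$) and Proposition \ref{BR0} (stated for $C_0$-inputs): one must check that $C_c^\infty(\R^d)$ sits inside all the relevant domains and, above all, that the $L^p$-bound of Corollary \ref{slambdabeschr} genuinely extends to all of $L^p(\R^d)$ --- it is this extension that makes the one-step estimate $|(S_\lambda^1-S_\lambda^2)f|\le\tfrac12\Gamma\|f\|_p$ already sufficient, so that one never needs the class $\{R_\lambda f\in C_b^2\}$ to be stable under $\mathcal{B}R_\lambda$ (which it is not). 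The genuinely analytic input, namely the strict $L^p$-contractivity of the perturbation $\mathcal{B}R_\lambda$, has been isolated beforehand in Proposition \ref{BR0}, where the smallness assumption \eqref{etaassumption} is used.
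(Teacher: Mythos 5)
Your proposal is correct and follows essentially the same perturbation argument as the paper: the identity of Proposition \ref{slambdaprop} to express the resolvent difference as its own image under $\mathcal{B}R_\lambda$, the contraction $\|\mathcal{B}R_\lambda f\|_p\le\tfrac12\|f\|_p$ obtained from Lemma \ref{resolnull}, Proposition \ref{resolventenabsch} and Proposition \ref{BR0} under \eqref{etaassumption}, finiteness of the supremum from Corollary \ref{slambdabeschr}, the resulting bound $\Theta\le\tfrac12\Theta$, and finally Theorem \ref{laplacegleich}. The only deviation is bookkeeping: you realize the supremum as the norm of an $L^p$-extension of $S_\lambda^1-S_\lambda^2$ via density of $C_c^\infty(\R^d)$ in place of the paper's supremum over $\{g\in C_b(\R^d)\colon R_0g\in C_b^2(\R^d)\}$, which is a minor (and arguably cleaner) variant of the same step.
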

\begin{proof}
We follow the proof of \cite[Proposition 6.2]{BCS}.\\
Let $p>\beta$. Moreover let $S_\lambda^1$ and $S_\lambda^2$ be defined as above with respect to $\mathds{P}_1$ and $\mathds{P}_2$ respectively.
Set \[S_\lambda^{\Delta}g:=S_\lambda^1 g - S_\lambda^2 g,\] 
where $g\in C_b(\R^d)$ with $R_0g\in C_b^2(\R^d)$ and let
\[ \Theta=\sup\limits_{\|g\|_p\leq 1}|S_\lambda^{\Delta}g|. \]
By Corollary \ref{slambdabeschr}, we have $\Theta<\infty$. 
Let $f\in C_c(\R^d)$ with $R_0f\in C_b^2(\R^d)$ and define $h:=f-\lambda R_{\lambda}f$. As in the proof of Theorem \ref{vlambda} we conclude $h\in C_0(\R^d)$ and $R_0h=R_{\lambda}f\in C_b^2(\R^d)$. By Proposition \ref{BR0}, we have
\[ \|BR_0h\|_p \leq \frac14 \|h\|_p. \]
Furthermore, Proposition \ref{slambdaprop} and $\|h\|_p\leq 2\|f\|_p$ imply
\[ |S_\lambda^{\Delta}\mathcal{B}R_\lambda f| = |S_\lambda^{\Delta}\mathcal{B}R_0h| \leq \Theta\|\mathcal{B}R_0h\|_p \leq \frac14 \Theta \|h\|_p \leq \frac12 \Theta\|f\|_p. \]
As in the proof of Corollary \ref{corrbdd}, we can take $f\in C_b(\R^d)$ with $R_0f\in C_b^2(\R^d)$. 
Taking the supremum over $f\in C_b(\R^d)$ with $R_0f\in C_b^2(\R^d)$ and $\|f\|_p\leq 1$, we have 
$\Theta\leq \frac12 \Theta$ and since $\Theta$ is finite we have  $\Theta=0$ by Corollary \ref{slambdabeschr}.
Finally, taking $f\in C_b^2(\R^d)$, the result follows by Theorem \ref{laplacegleich}.
\end{proof}
The following result provides a maximal inequality.
\begin{lemma}\label{coroweak}
 Let $(\Omega,\mathcal{F},(\mathcal{F}_t)_{t\geq 0}, \mathds{P}, X, Z)$ be a weak solution to \eqref{SDEM}. There exists a constant $c>0$ depending
 only on the upper bounds of $|A_{ij}(x)|$ for $1\leq i,j\leq d$ and the dimension $d$, such that for every $\delta>0$ and $t\geq 0$ 
 \[ \mathds{P}\left( \sup\limits_{s\leq t} |X_s-X_0|>\delta  \right) \leq ct\sum_{j=1}^d \delta^{-\alpha_j}. \]
\end{lemma}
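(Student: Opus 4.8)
The plan is to exploit the martingale property of Proposition~\ref{martingale} for a test function that is concentrated at scale $\delta$ around the starting point $x_0$, combined with an optional stopping argument at the exit time from $B_\delta(x_0)$. (Recall that $X_0=x_0$ for a weak solution to \eqref{SDEM}.)

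Fix once and for all a radial function $g\in C_b^2(\R^d)$ with $0\le g\le1$, $g\equiv0$ on $B_{1/4}(0)$ and $g\equiv1$ on $\R^d\setminus B_{3/4}(0)$, and for $\delta>0$ set $f_\delta(x):=g\bigl((x-x_0)/\delta\bigr)$. Then $f_\delta\in C_b^2(\R^d)$, $f_\delta(x_0)=0$, and $f_\delta(z)=1$ whenever $|z-x_0|\ge\delta$. The key step is the estimate $\|\mathcal{L}f_\delta\|_\infty\le c\sum_{j=1}^d\delta^{-\alpha_j}$. I would derive it from the symmetric form of $\mathcal{L}$ in Lemma~\ref{second-order}: writing $f_\delta(x\pm a_j(x)h)=g\bigl(y\pm a_j(x)h/\delta\bigr)$ with $y:=(x-x_0)/\delta$ and substituting $h=\delta u$, the kernel $|h|^{-1-\alpha_j}\,\d h$ contributes precisely a factor $\delta^{-\alpha_j}$, so that
\[ \mathcal{L}f_\delta(x)=\frac12\sum_{j=1}^d\delta^{-\alpha_j}\int_{\R\setminus\{0\}}\bigl(g(y+a_j(x)u)-2g(y)+g(y-a_j(x)u)\bigr)\frac{c_{\alpha_j}}{|u|^{1+\alpha_j}}\,\d u. \]
For $|u|\le1$ I would bound the second difference by $\|D^2g\|_\infty\,|a_j(x)|^2\,|u|^2$ and use $\int_{\{0<|u|\le1\}}|u|^{1-\alpha_j}\,\d u<\infty$; for $|u|>1$ I would bound it by $4\|g\|_\infty$ and use $\int_{\{|u|>1\}}|u|^{-1-\alpha_j}\,\d u<\infty$. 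Since $|a_j(x)|^2\le d\,\bigl(\sup_{i,k,z}|A_{ik}(z)|\bigr)^2$, this yields the claimed bound with a constant that, treating the fixed $g$ and the fixed indices $\alpha_1,\dots,\alpha_d$ as absolute, depends only on $d$ and on the sup-norms of the entries of $A$.

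With this in hand, set $T:=\inf\{s\ge0:|X_s-x_0|>\delta\}$, which is a stopping time. By Proposition~\ref{martingale}, $f_\delta(X_t)-f_\delta(X_0)-\int_0^t\mathcal{L}f_\delta(X_s)\,\d s$ is a $\mathds{P}$-martingale; applying optional stopping at the bounded time $t\wedge T$ and using $f_\delta(X_0)=f_\delta(x_0)=0$ yields
\[ \mathds{E}\bigl[f_\delta(X_{t\wedge T})\bigr]=\mathds{E}\Bigl[\int_0^{t\wedge T}\mathcal{L}f_\delta(X_s)\,\d s\Bigr]\le t\,c\sum_{j=1}^d\delta^{-\alpha_j}. \]
On $\{T\le t\}$ the right-continuity of the paths of $X$ forces $|X_T-x_0|\ge\delta$, hence $f_\delta(X_{t\wedge T})=f_\delta(X_T)=1$; therefore the left-hand side is at least $\mathds{P}(T\le t)$. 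Since $\{\sup_{s\le t}|X_s-x_0|>\delta\}\subseteq\{T\le t\}$, this is exactly the assertion.

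I do not expect a genuine obstacle here; the only point requiring care is the scaling bookkeeping in the generator estimate — choosing the test function at scale $\delta$ and changing variables so that the jump kernel produces the factor $\delta^{-\alpha_j}$ rather than $\delta^{-2}$ — together with checking that the rescaled jump integrals converge for every $\alpha_j\in(0,2)$, including $\alpha_j=1$, which is guaranteed by the second-order-difference representation of $\mathcal{L}$ from Lemma~\ref{second-order}.
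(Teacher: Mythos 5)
Your argument is correct, but it takes a different route from the paper. The paper disposes of this lemma in one line by invoking an off-the-shelf maximal inequality for Feller-type processes (Theorem 5.1 of the cited work of B\"ottcher et al., see also Schilling's earlier maximal estimates): the exit probability from a ball of radius $\delta$ is bounded by $c\,t$ times the supremum of the symbol $p(y,\xi)=\sum_{j=1}^d|\xi\cdot a_j(y)|^{\alpha_j}$ over $|\xi|\le 1/\delta$ and $y$ near $x_0$, which is then estimated by $c\,t\sum_j \delta^{-\alpha_j}$ using the boundedness of the $A_{ij}$. You instead reprove the maximal inequality from scratch: a bump function $f_\delta$ at scale $\delta$, the scaling identity $\mathcal{L}f_\delta(x)=\tfrac12\sum_j\delta^{-\alpha_j}\int\bigl(g(y+a_j(x)u)-2g(y)+g(y-a_j(x)u)\bigr)c_{\alpha_j}|u|^{-1-\alpha_j}\,\d u$ obtained from Lemma~\ref{second-order}, the split of the $u$-integral at $|u|=1$, and optional stopping of the martingale from Proposition~\ref{martingale} at $t\wedge T$ with $T$ the first exit time from $B_\delta(x_0)$. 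This is in essence the standard proof underlying the cited maximal inequality, so the two routes are close in spirit; what yours buys is self-containedness (only Proposition~\ref{martingale} and Lemma~\ref{second-order} are used, and the argument applies verbatim to any solution of the martingale problem, not only to weak solutions), while the paper's citation is shorter. Two small points of bookkeeping you should make explicit: the constant in both proofs also depends on the fixed indices $\alpha_1,\dots,\alpha_d$ (through $c_{\alpha_j}\int_{0<|u|\le1}|u|^{1-\alpha_j}\d u$ and $c_{\alpha_j}\int_{|u|>1}|u|^{-1-\alpha_j}\d u$ in your case), exactly as it implicitly does in the paper; and the conclusion $f_\delta(X_{T})=1$ on $\{T\le t\}$ uses that $f_\delta\equiv1$ already on $\{|z-x_0|\ge 3\delta/4\}$ together with right-continuity of the paths (plus the usual conditions on the filtration so that $T$ is a stopping time), which you essentially noted and which poses no difficulty.
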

\begin{proof}
Using the maximal inequality from \cite[Theorem 5.1]{Bottch}, we immediately conclude
\[ \mathds{P}\left(\sup_{s\leq t} |X_s-X_0|\geq \delta \right) \leq c_1 t \sup_{|y-x|\leq r}\sup_{|\delta|\leq 1/r} \sum_{j=1}^d |\delta\cdot a_j(x)|^{\alpha_j}\leq c_2 t\sum_{j=1}^d \delta^{-\alpha_j}, \]
where $c_2$ depends on the upper bounds of $|A_{ij}(x)|$ for $1\leq i,j\leq d$ and the dimension $d$ only.
\end{proof}
Note that Lemma \ref{coroweak} is an immediate consequence of \cite[Lemma 4.1]{Schilling1}, where the author provides maximal inequalities for a large class of 
Feller processes. See also \cite[Section 3]{SchillingUemura}.

  Let $\Theta_t$ be the shift operator on $\mathds{D}([0,\infty))$ that is $f(s)\circ\Theta_t=f(s+t)$. Recall the definition of the first exit time:
 \[ \tau:=\tau_{B_r(x_0)} := \inf\{ t\geq 0 \colon |X_t-x_0|\geq r \}.\]
 We define 
 \[\mathds{P}_{\tau}(A)=\mathds{P}_i(A\circ \Theta_{\tau})\]
 and let $\mathds{E}_{\tau}$ be the expectation with respect to $\mathds{P}_{\tau}$.
 \begin{lemma}\label{rcplemma}
  Let $\mathds{P}$ be a solution to the martingale problem for $\mathcal{L}$ started at $x_0\in\R^d$ and 
  $\mathds{Q}(\cdot,\cdot)$ be a regular conditional probability for $\mathds{E}[\cdot\,|\mathcal{F}_{\tau}]$.
  Then $\mathds{Q}(\omega,\cdot)$ is $\mathds{P}$-almost surely a solution to the martingale problem for $\mathcal{L}$ started at $X_{\tau}(\omega)$.
 \end{lemma}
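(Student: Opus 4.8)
The plan is to verify, for $\mathds{P}$-almost every $\omega$, the two conditions of Definition~\ref{defmart} for $\mathds{Q}(\omega,\cdot)$ read along the shift $\Theta_\tau$, i.e.\ for the post-$\tau$ coordinate process $(X_{\tau+t})_{t\ge0}$ with the filtration $(\mathcal{F}_{\tau+t})_{t\ge0}$ and initial value $X_\tau(\omega)$; this is the classical strong-Markov-type argument for martingale problems (see \cite{STROOCKVARA}, \cite[Chapter~VI]{BAD}, and the preliminaries to \cite[Proposition~6.2]{BCS} for the present setting). The three ingredients are: the defining property of a regular conditional probability, namely that $\omega\mapsto\mathds{E}^{\mathds{Q}(\omega,\cdot)}[G]$ is a version of $\mathds{E}[G\mid\mathcal{F}_\tau]$ for every bounded measurable $G$ and that $\mathds{Q}(\omega,A)=\mathds{1}_A(\omega)$ for $A\in\mathcal{F}_\tau$; optional sampling for $\mathds{P}$-martingales; and the elementary fact that $\mathcal{L}f$ is bounded whenever $f\in C_b^2(\R^d)$, which follows from Assumption~\ref{assumption1} by splitting the integral defining $\mathcal{L}$ into $\{|h|\le1\}$, where a second-order Taylor estimate together with $\int_{|h|\le1}h^2|h|^{-1-\alpha_j}\,\d h<\infty$ (using $\alpha_j<2$) applies, and $\{|h|>1\}$, where $f$ is bounded and $\int_{|h|>1}|h|^{-1-\alpha_j}\,\d h<\infty$ (using $\alpha_j>0$).

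First I would dispose of the initial condition: since $X_\tau$ is $\mathcal{F}_\tau$-measurable, the regular-conditional-probability property yields $\mathds{Q}(\omega,\{X_\tau=X_\tau(\omega)\})=1$ for $\mathds{P}$-a.e.\ $\omega$, which is exactly property~(1) for the shifted process. For property~(2), fix $f\in C_b^2(\R^d)$ and put $M_t^f:=f(X_t)-f(X_0)-\int_0^t\mathcal{L}f(X_s)\,\d s$; since $\mathds{P}$ solves the martingale problem, $M^f$ is a $\mathds{P}$-martingale, and crucially, for rationals $q\le r$ the increment $M^f_{\tau+r}-M^f_{\tau+q}=f(X_{\tau+r})-f(X_{\tau+q})-\int_{\tau+q}^{\tau+r}\mathcal{L}f(X_s)\,\d s$ is bounded by $2\|f\|_\infty+(r-q)\|\mathcal{L}f\|_\infty$, uniformly in $\omega$. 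I want to show that $t\mapsto M^f_{\tau+t}-M^f_\tau$ is a $\mathds{Q}(\omega,\cdot)$-martingale with respect to $(\mathcal{F}_{\tau+t})_{t\ge0}$ for $\mathds{P}$-a.e.\ $\omega$; equivalently, that $\mathds{E}^{\mathds{Q}(\omega,\cdot)}[(M^f_{\tau+r}-M^f_{\tau+q})\mathds{1}_B]=0$ for all rational $q\le r$ and all $B$ in a countable algebra generating $\mathcal{F}_{\tau+q}$. Testing this against an arbitrary $C\in\mathcal{F}_\tau$ and using the r.c.p.\ identity gives
\[
\int_C\mathds{E}^{\mathds{Q}(\omega,\cdot)}\big[(M^f_{\tau+r}-M^f_{\tau+q})\mathds{1}_B\big]\,\mathds{P}(\d\omega)=\mathds{E}^{\mathds{P}}\big[(M^f_{\tau+r}-M^f_{\tau+q})\mathds{1}_{B\cap C}\big],
\]
and since $B\cap C\in\mathcal{F}_{\tau+q}$ the right-hand side vanishes by optional sampling for $M^f$ --- here, because $\tau$ need not be bounded, I would apply optional sampling at the bounded times $(\tau+q)\wedge N\le(\tau+r)\wedge N$ and let $N\to\infty$, the passage to the limit being justified by dominated convergence since the truncated increments are bounded by the same constant $2\|f\|_\infty+(r-q)\|\mathcal{L}f\|_\infty$. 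As $C\in\mathcal{F}_\tau$ was arbitrary and the integrand is $\mathcal{F}_\tau$-measurable, it vanishes $\mathds{P}$-a.s.

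Intersecting the countably many exceptional null sets (over $q\le r$ rational, $B$ in a countable generating algebra, and $f$ ranging over a suitable countable set $\mathcal{D}\subset C_b^2(\R^d)$) then produces a single $\mathds{P}$-null set off which both conditions of Definition~\ref{defmart} hold for the post-$\tau$ process, which is the assertion. I expect the only step requiring genuine care to be the choice of $\mathcal{D}$: it must be rich enough that the martingale property for all $f\in\mathcal{D}$ forces it for all $f\in C_b^2(\R^d)$, which one arranges by taking $\mathcal{D}$ so that every $f\in C_b^2(\R^d)$ is the limit of a sequence $f_k\in\mathcal{D}$ with $f_k\to f$ and $\mathcal{L}f_k\to\mathcal{L}f$ boundedly and pointwise (possible by the same estimates that bound $\mathcal{L}f$), so that the martingale property survives the limit by dominated convergence. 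This bookkeeping, rather than any probabilistic difficulty, is the crux; everything else is routine and is specific neither to the nonlocal nor to the anisotropic features of $\mathcal{L}$.
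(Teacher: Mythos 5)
Your argument is correct and is essentially the same as the paper's: the paper offers no proof of this lemma but defers to \cite[Proposition VI.2.1]{BAD}, whose proof is precisely your regular-conditional-probability plus optional-sampling argument, with the same countable reductions (rational times, a countable generating $\pi$-system for $\mathcal{F}_{\tau+q}$, a countable family of test functions with $\mathcal{L}f$ bounded for $f\in C_b^2(\R^d)$). Your reading of ``started at $X_{\tau}(\omega)$'' in terms of the shifted process $(X_{\tau+t})_{t\geq 0}$ with filtration $(\mathcal{F}_{\tau+t})_{t\geq 0}$ is also the intended one, matching how the lemma is used in the proof of Theorem \ref{thm:uniqueness}.
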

For a proof we refer the reader to \cite[Proposition VI.2.1]{BAD}.
\subsection{Proof of Theorem \ref{thm:uniqueness}}\label{proofsub}
\begin{proof}
The proof follows the idea of the proof of \cite[Theorem VI.3.6]{BAD}\\
Let $\mathds{P}_1$ and $\mathds{P}_2$ be two solutions to the martingale problem for $\mathcal{L}$ started at $x_0\in\R^d$. 
Recall that we consider the
canonical process $(X_t)_{t\geq 0}$ on the Skorohod space $\Omega=\mathds{D}([0,\infty);\R^d)$, i.e. $X_t(\omega)=\omega(t)$ and 
$(\mathcal{F}_t)$ is the minimal augmented filtration with respect to the process $(X_t)_t$. We denote the $\sigma$-field of the probability space by
$\mathcal{F}_{\infty}$.\\
For $N\in\N$ let 
\[ \rho_N:=\tau_{\overline{B_N}}:= \inf\{t\geq 0 : |X_t-x_0|>N\}. \]
Since c\`{a}dl\`{a}g functions are locally bounded the process $X_t$ does not explode in finite time. Further by the transience of $Z_t$, we have for $i=1,2$,
\begin{equation}\label{rho}
 \rho_N\to\infty \quad \mathds{P}_i\text{-a.s. } \quad \text{as } N\to\infty. 
\end{equation}
To prove $\mathds{P}_1=\mathds{P}_2$, we have to show that all finite dimensional distributions of $X_t$ under $\mathds{P}_1$ and $\mathds{P}_2$ are the same. 
By \eqref{rho} it is sufficient to show  that there is a $N_0\in\N$ such that for all $N\geq N_0$
\[ \mathds{P}_1 \bigr|_{\mathcal{F}_{\rho_N}}=\mathds{P}_2 \bigr|_{\mathcal{F}_{\rho_N}}.  \]
Choose $N_0=\lfloor |x_0| \rfloor + 1$ and let $N\geq N_0$ be arbitrary. Set 
\[ \|A\|_{\infty}:= \max\limits_{1\leq j\leq d} \sup\limits_{x\in\R^d} |A_{jj}(x)|. \]
Since $A(x)$ is non-degenerate at each point $x\in\R^d$, 
\[ \mu_1(A,N):= \inf\limits_{x\in B_N}\inf\limits_{u\in\R^d \colon \|u\|=1} |A(x)u|>0. \]
Let $\widetilde{\eta_0}:=\frac{\eta_0}{2}$, where $\eta_0$ is defined as in \eqref{etaassumption}. Since $A$ is continuous on $\R^d$, it is uniformly continuous on $\overline{B_{N+1}}$. 
Hence there is a $r\in(0,1)$ such that
\[ \sup\limits_{1\leq j \leq d} |A_{jj}(x)-A_{jj}(y)|<\frac{\widetilde{\eta_0}}{2} \quad \text{ for } x,y\in B_{N+1}, |x-y|<r. \]
Let $\widetilde{A}:\R^d\to\R^{d\times d}$ be diagonal such that $\widetilde{A}=A$ on $B_r$ and the functions $x\mapsto \widetilde{A_{jj}}(x)$ 
on the diagonal are continuous and bounded for all $j\in\{1,\dots,d\}$. Moreover let $\widetilde{A}$ be 
uniformly non-degenerate such that
\[ \mu_{1,1}(\widetilde{A}) = \inf\limits_{u\in\R^d: \\ |u|=1} \inf\limits_{x\in\R^d} |\widetilde{A}(x)u|>\frac{\mu_1(A,N)}{2} \]
and 
\[ \sup_{ j\in\{1,\dots,d\}} |\widetilde{A}_{jj}(\cdot)-\widetilde{A}_{jj}(x_0)|_{L^{\infty}(B_r)}<\widetilde{\eta_0}. \]
Let $\widetilde{\mathcal{L}}$ be defined as $\mathcal{L}$ with $A$ replaced by $\widetilde{A}$. 
By Proposition \ref{uniqeta}, there is a unique solution of the martingale problem for $\widetilde{\mathcal{L}}$ started at any $x_0\in\R^d$. 
We call this solution $\widetilde{\mathds{P}}$. \\
Let $\widetilde{\mathds{Q}}(\cdot,\cdot)$ be a regular conditional probability for $\widetilde{\mathds{E}}[\cdot\,|\mathcal{F}_{\tau}]$
By Lemma \ref{rcplemma} $\widetilde{\mathds{Q}}(\omega,\cdot)$ is $\widetilde{\mathds{P}}$-almost surely a solution to the martingale problem for $\widetilde{\mathcal{L}}$ started at $X_{\tau}(\omega)$.
For abbreviation we denote this measure by $\widetilde{\mathds{Q}}$ .\\
Define the measure on $(\mathcal{F}_{\infty}\circ\Theta_{\tau})\cap \mathcal{F}_{\tau}$ by
\[ \overline{\mathds{P}}(A\cap B\circ \Theta_{\tau}):= \int_{A}\widetilde{\mathds{Q}}(B) \, \d\mathds{P}_i, \quad A\in\mathcal{F}_{\tau}, \ B\in\mathcal{F}_{\infty}, \]
which represents the process behaving according to $\mathds{P}_i$ up to time $\tau$ and afterwards according to $\widetilde{\mathds{P}}$.\\
We now show that $\overline{\mathds{P}}_i$ solves the martingale problem for $\widetilde{\mathcal{L}}$ started at $x_0$.\\
Clearly $\overline{\mathds{P}}_i(X_0=x_0)=\mathds{P}_i(X_0=x_0)=1$. \\
Let $f\in C_b^2(\R^d)$. Then
\begin{align*}
 M_t &= f(X_{t\wedge\tau}) - f(X_0) - \int_0^{t\wedge\tau}\widetilde{\mathcal{L}}f(X_s)\, \d s\\
 & = f(X_{t\wedge\tau}) - f(X_0) - \int_0^{t\wedge\tau} \mathcal{L}f(X_s)\, \d s
\end{align*}
is $\mathcal{F}_{\tau}$ measurable for each $t\geq 0$ and by assumption a $\mathds{P}_i$-martingale. Therefore $(M_t)_{t\geq 0}$ a $\overline{\mathds{P}}_i$-martingale.
Further
\[ N_t=f(X_{t+\tau})-f(X_{\tau}) - \int_{\tau}^{t+\tau} \widetilde{\mathcal{L}}\,\d s \]
is a $\overline{\mathds{P}}_i$-martingale by Lemma \ref{rcplemma}.\\
Hence $\overline{\mathds{P}}_i$, $i=1,2$, is a solution to the martingale problem for $\widetilde{\mathcal{L}}$ started at $x_0$. By definition of $\widetilde{\mathcal{L}}$ the coefficients
satisfy the assumptions of Proposition \ref{uniqeta} and therefore $\overline{\mathds{P}}_1=\overline{\mathds{P}}_2$, 
which implies $\mathds{P}_1 \bigr|_{\mathcal{F}_{\tau}}=\mathds{P}_2 \bigr|_{\mathcal{F}_{\tau}}$.\\
We define the sequence of exit times $(\tau_{k})_{k\in\N}$ as follows
\[ \tau_1 : = \tau \quad \text{and} \quad \tau_{k+1} =\inf\{ t>\tau_k \colon |X_t-X_{\tau_k}|>r\}\wedge \rho_{N}. \]
Iterating the piecing-together method from before, we get $ \mathds{P}_1=\mathds{P}_2$ on $\mathcal{F}_{\tau_k}$ for all $k\in\N$.
By Lemma \ref{coroweak} it holds that $\tau_k\to \rho_N$ as $k\to\infty$ and hence we get $\mathds{P}_1=\mathds{P}_2$ on $\mathcal{F}_{\rho_N}$,
which finishes the proof.
\end{proof}
\subsection*{Acknowledgment} This work is part of the author's PhD thesis written under the supervision of Moritz Kassmann at Bielefeld University.
The author wishes to express his thanks to Franziska K\"uhn for helpful comments concerning the existence of solutions to the martingale problem.
The author gratefully acknowledges the many helpful suggestions of the anonymous referee.
\enlargethispage*{4 \baselineskip}
\bibliographystyle{alpha}
\bibliography{bib}

\end{document}